\title[]{The $G$-stable rank for tensors}
\author[]{Harm Derksen}
\thanks{The author was supported by NSF grant IIS 1837985.}
\newtheorem{theorem}{Theorem}[section]
\newtheorem{proposition}[theorem]{Proposition}
\newtheorem{conjecture}[theorem]{Conjecture}
\newtheorem{lemma}[theorem]{Lemma}
\newtheorem{corollary}[theorem]{Corollary}
\theoremstyle{definition}
\newtheorem{definition}[theorem]{Definition}
\newtheorem{example}[theorem]{Example}
\newcommand{\Trace}{\operatorname{Tr}}
\newcommand{\ncrk}{\operatorname{ncrk}}
\newcommand{\SL}{\operatorname{SL}}
\newcommand{\C}{{\mathbb C}}
\newcommand{\R}{{\mathbb R}}
\newcommand{\Z}{{\mathbb Z}}
\newcommand{\Q}{{\mathbb Q}}
\newcommand{\F}{{\mathbb F}}
\newcommand{\LS}{K(\!(t)\!)}
\newcommand{\PS}{K[[t]]}
\newcommand{\supp}{\operatorname{supp}}
\newcommand{\Gm}{{\mathbb G}_m}
\newcommand{\Spec}{\operatorname{Spec}}
\newcommand{\LP}{{\bf LP}}
\newcommand{\End}{\operatorname{End}}
\newcommand{\rk}{\operatorname{rk}}
\newcommand{\GL}{\operatorname{GL}}
\newcommand{\vv}{\operatorname{val}_t}
\newcommand{\slicerk}{\operatorname{srk}}
\newcommand{\srk}{\operatorname{srk}}
\newcommand{\brk}{\operatorname{brk}}
\begin{document}

\maketitle
\setcounter{tocdepth}{1}

\begin{abstract}
    We introduce the $G$-stable rank of a higher order tensors over perfect fields.
    The $G$-stable rank is related to the Hilbert-Mumford criterion for stability in Geometric Invariant Theory. We will relate the $G$-stable rank to the tensor rank and slice rank. For numerical applications, we express the $G$-stable rank as a solution to an optimization problem. Over the field $\F_3$ we discuss an application to the Cap Set Problem.
\end{abstract}
\tableofcontents
\section{Introduction}
\subsection{Ranks of tensors}
We will introduce the $G$-stable rank for tensors, describe its properties and relate it to other notions for the rank of a tensor, such as the {\em tensor rank}, {\em border rank}, {\em slice rank} and {\em non-commutative rank}. Suppose that $K$ is a field,  $V_1,V_2,\dots,V_d$ are finite dimensional $K$-vector spaces  and $V=V_1\otimes V_2\otimes \cdots \otimes V_d$ is the tensor product. All tensor products are assumed to be over the field $K$ unless stated otherwise.
The definition of tensor rank goes back to Hitchcock \cite{Hitchcock27a,Hitchcock27b}:
\begin{definition}
The rank $\rk(v)$ of a tensor $v\in V$ is the smallest nonnegative integer $r$ such that we can write 
$v=\sum_{i=1}^r v_{i,1}\otimes v_{i,2}\otimes \cdots\otimes v_{i,d}$ 
with $v_{i,j}\in V_j$ for all $i$ and $j$.  
\end{definition}
There are many applications of the tensor rank and the related concept of CP-decomposition (see~\cite{BaderKolda09} for a survey). 
For $d=2$, tensor rank coincides with  matrix rank. Computing the tensor rank is NP-hard \cite{Hastad89,Hastad90},
and tensor rank is ill-behaved. For example,
the set $X(\rk,r)\subseteq V$ of all tensors of rank $\leq r$
is not always Zariski closed. The {\em border rank} $\brk(v)$ of a tensor $v$ is the smallest positive integer $r$ such that $v\in X(\rk,r)$ (see \cite{BCS97, Landsberg12}). The  {\em slice rank} of a tensor was introduced by Terence Tao (see~\cite{TaoSawin16,BCCGNSU17}). 
\begin{definition}
A non-zero tensor $v\in V$ has slice rank $1$ if it is contained in $$V_1\otimes \cdots \otimes V_{i-1}\otimes w\otimes V_{i+1}\otimes \cdots\otimes V_d$$ 
for some $i$ and some $w\in V_i$. The slice rank $\slicerk(v)$ of an arbitrary tensor $v\in V$ is the smallest nonnegative integer $r$
such that $v$ is the sum of $r$ tensors with slice rank $1$.
\end{definition}

 \subsection{The definition of the $G$-stable rank}

 We will now define the $G$-stable rank. It was noted in \cite{BCCGNSU17} that the slice-rank is closely related the notion of stability in Geometric Invariant Theory (see~\cite{MFK94}). The authors
 also introduce the {\em instability} of a tensor and relate it to the slice rank. The instability of a tensor does not behave like a rank function, but it is closely related to the $G$-stable rank. We will define the $G$-stable rank in terms of degenerations and power series. It can also be defined in terms 1-parameter subgroup using the Hilbert-Mumford criterion in Geometric Invariant Theory (see Theorem~\ref{theo:1PSG}). The Hilbert-Mumford criterion is often formulated when working over an algebraically closed field $K$. Kempf showed in \cite{Kempf78} that the Hilbert-Mumford criterion still applies when working of a perfect field $K$. For this reason,  we will assume that $K$ is a perfect field for the remainder of the paper.

To define the $G$-stable rank, we need to introduce the ring $\PS$ of formal power series in $t$ and its quotient field $\LS$ of formal Laurent series. 
The $t$-valuation of a series $a(t)\in \LS$ is the smallest integer $d$ such that $a(t)=t^db(t)$ with $b(t)\in K[[t]]$. 
By convention, $\vv(0)=\infty$. 
 If $W$ is a $K$-vector space and $v(t)\in \LS\otimes W$ then we define
$$\vv(v(t))=\min\{d\mid  v(t)=t^dw(t)\mbox{ and $w(t)\in K[[t]]\otimes W$}\}.$$
We say that $v(t)$ has no poles when $\vv(v(t))\geq 0$, which is equivalent to $v(t)\in K[[t]]\otimes W$. In that case
we say that $\lim_{t\to 0} v(t)$ exists, and is equal to $v(0)\in W$.

The group $\GL(W,\LS)$ will denote the group of $\LS$-linear endomorphisms of the space $\LS\otimes_K W$.
We may view $\GL(W,\LS)$ as a subset of $\LS\otimes_K\End(W)$. If $W=K^n$ then $K(t)\otimes_K W\cong K((t))^n$ and we can identify $\GL(W,\LS)$ with the set of $n\times n$ matrices with entries in the field $\LS$.
If $R\subseteq \LS$ is a $K$-subalgebra of $\LS$ (such as $R=\PS$,
$R=K[t,t^{-1}]$ or $R=K[t]$), 
then $\GL(W,R)$ is the intersection of $\GL(W,\LS)$ with $R\otimes_K \End(W)$ in $\LS\otimes_K \End(W)$. Note that the
inverse of an element in $\GL(W,R)$ lies in $\GL(W,\LS)$, but not necessarily in $\GL(W,R)$. If $W=K^n$, then $\GL(W,R)$ is the set of $n\times n$ matrices with entries in $R$ that, viewed as a matrix with entries in $\LS$, are invertible.

We consider the action of the group $G=\GL(V_1)\times \GL(V_2)\times \cdots\times \GL(V_d)$ on the tensor product space $V=V_1\otimes V_2\otimes \cdots\otimes V_d$. For any $K$-subalgebra $R\subseteq \LS$, we define
$$G(R)=\GL(V_1,R)\times \cdots\times \GL(V_d,R).$$
The group $G(\LS)$ acts on $\LS\otimes V$. 

For any weight $\alpha=(\alpha_1,\alpha_2,\dots,\alpha_d)\in \R^d_{>0}$
we will have a notion of $G$-stable rank, but the case $\alpha=(1,1,\dots,1)$ will be of particular interest. Suppose that $g(t)\in G(\PS)$, $v\in V$
and $\vv(g(t)\cdot v)>0$. We consider the slope
\begin{equation}\label{eq:inf}
\mu_\alpha(g(t),v)=\frac{\sum_{i=1}^d \alpha_i \vv( \det g_i(t))}{\vv(g(t)\cdot v)}.
\end{equation}
Heuristically, the denominator in the slope measures how fast $g(t)\cdot v$ goes to $0$ as $t\to 0$. The numerator measures how fast the eigenvalues of $g_1(t),g_2(t),\dots,g_d(t)$ go to $0$ as $t\to 0$. A small slope means that $v$ is very unstable in the sense that $g(t)\cdot v$ goes to $0$ quickly, while, on average,  the eigenvalues of $g_i(t)$ go to 0 slowly.
\begin{definition}
The $G$-stable $\alpha$-rank $\rk^G_{\alpha}(v)$ of $v$
as the infimum of all $\mu_{\alpha}(g(t),v)$ where $g(t)\in G(\PS)$ and $\vv(g(t)\cdot v)>0$.
If $\alpha=(1,1,\dots,1)$, then we may write $\rk^G$ instead of $\rk^G_{\alpha}$.
\end{definition}
Using a $K$-rational version of the Hilbert-Mumford (\cite{Hilbert93,MFK94}) criterion by Kempf~\cite{Kempf78}, we will show that for computing the $G$-stable $\alpha$-rank, one only has to consider $g(t)$ that are $1$-parameter subgroups of $G$ without poles (Theorem~\ref{theo:1PSG}).
In this context, $g(t)\in G(K[t])$ is a $1$-parameter subgroup if for every $i$ we can choose a basis of $V_i$ such that the matrix of $g(t)$ is diagonal
and each diagonal entry of that matrix is a nonnegative power of $t$.

We denote the standard basis vectors in $K^n$ by $[1],[2],\dots,[n]$, and we abbreviate a tensor $[i_1]\otimes [i_2]\otimes \cdots\otimes [i_d]$
by $[i_1,i_2,\dots,i_d]$.
\begin{example}\label{example:2}
Suppose that $V_1=V_2=V_3=K^2$, and $v=[2,1,1]+[1,2,1]+[1,1,2]$.
We take $g(t)=(g_1(t),g_2(t),g_3(t))$
with 
$$
g_1(t)=g_2(t)=g_3(t)=\begin{pmatrix} t & 0\\ 0 & 1\end{pmatrix}.
$$
We have $g(t)\cdot v=t^2 v$, $\det(g_i(t))=t$, and
$$
\mu(g(t),v)=\mu_{(1,1,1)}(g(t),v)=\frac{\vv(\det g_1(t))+\vv(\det g_2(t))+\vv(\det g_3(t))}{\vv(g(t)\cdot v)}=\textstyle \frac{1+1+1}{2}=\frac{3}{2}.
$$
This shows that $\rk^G(v)\leq \frac{3}{2}$. One can show 
that $\rk^G(v)=\frac{3}{2}$ (see Example~\ref{example:3} and Example~\ref{example:222tensor}).
\end{example}
\subsection{Properties of the $G$-stable rank}
If $v$ is a rank $1$ tensor, then we have $\rk_\alpha^G(v)=\min\{\alpha_1,\dots,\alpha_d\}$
and $\rk^G(v)=1$ (Lemma~\ref{lem:Granklb}).
The $G$-stable rank is related to other notions of rank. We have (see Corollary~\ref{cor:GstableSlice} and Proposition~\ref{prop:GstableSlice3})
$$
\frac{2\slicerk(v)}{d}\leq \rk^G(v)\leq \slicerk(v)\leq \brk(v)\leq \rk(v).
$$
This implies that for $d=2$, the $G$-stable rank, the slice rank and the matrix rank coincide.

The tensor rank depends on the field one is working over. For example, the tensor $[1,1,1]-[1,2,2]-[2,1,2]-[2,2,1]$
has rank $3$ as a tensor in $\R^{2\times 2\times 2}$ but rank $2$ when viewed as a tensor in $\C^{2\times 2\times 2}$.
Although it is not clear from the definition, the $G$-stable rank does not change when passing to a field extension of $K$ (see Theorem~\ref{theo:FieldIndependent}).

Another nice property of the $G$-stable rank is that the border rank phenomenon does not happen and the
set $X(\rk^G_\alpha,r)$ of all tensors $v$ with $\rk^G_\alpha(v)\leq r$ is Zariski closed (Theorem~\ref{theo:ZariskiClosed}). Tao proved a  similar result for the slice rank~\cite{Tao16}, and this implies that $\slicerk(v)\leq \brk(v)$ for all tensors $v$.

Like other rank notions, the $G$-stable rank satisfies the triangle inequality: $\rk^G_\alpha(v+w)\leq \rk^G_\alpha(v)+\rk^G_\alpha(w)$ (see Proposition~\ref{prop:triangle}).
 If $v\in V_1\otimes V_2\otimes \cdots \otimes V_d$
and $w\in W_1\otimes W_2\otimes \cdots \otimes W_d$ then the direct sum of $v$ and $w$, viewed as
$$
\begin{pmatrix}v \\ w
\end{pmatrix}\in 
\begin{array}{c}
V_1\otimes V_2\otimes \cdots \otimes V_d\\ \oplus\\ W_1\otimes W_2\otimes \cdots\otimes W_d
\end{array}
\subseteq V\boxplus W:=
\begin{pmatrix}
V_1\\ \oplus \\ W_1\end{pmatrix}\otimes 
\begin{pmatrix}
V_2\\ \oplus \\ W_2\end{pmatrix}\otimes \cdots\otimes
\begin{pmatrix}
V_d\\ \oplus \\ W_d\end{pmatrix}
$$
will be denoted by $v\boxplus w$.
 (We will
use the notation $v\boxplus w$ and $V\boxplus W$ rather than the more common notation $v\oplus w$ and $V\oplus W$ to emphasize that this direct sum
is a ``vertical'' operation, i.e., the sum $V_i\oplus W_i$ is taken within each tensor factor.)
The $G$-stable rank is additive (Proposition~\ref{prop:additive}): $\rk^G_{\alpha}(v\boxplus w)=\rk^G_{\alpha}(v)+\rk^G_{\alpha} (w)$. In particular, if
\begin{multline*}
v=[1,1,\dots,1]+[2,2,\dots,2]+\cdots+[r,r,\dots,r]=\\=\underbrace{[1,1,\dots,1]\boxplus [1,1,\dots,1]\boxplus \cdots\boxplus [1,1,\dots,1]}_r\in \underbrace{K^{r}\otimes K^r\otimes \cdots\otimes K^{r}}_d,
\end{multline*}
then $\rk^G_\alpha(v)=r\rk^G_\alpha([1,1,\dots,1])=r\min\{\alpha_1,\dots,\alpha_d\}$ and $\rk^G(v)=r$.
Strassen conjectured in \cite{Strassen73} that tensor rank is additive when $K$ is infinite, but Shitov recently gave a counterexample to
this long standing conjecture (see~\cite{Shitov19}).

If $v\in V_1\otimes V_2\otimes \cdots\otimes V_d$ and $w\in W_1\otimes W_2\otimes \cdots\otimes W_e$, then we can
form the ``horizontal'' tensor product $v\otimes w\in V_1\otimes \cdots\otimes V_d\otimes W_1\otimes \cdots \otimes W_e$. It is clear that
$\rk(v\otimes w)\leq \rk(v)\rk(w)$. It was recently shown in \cite{CGJ19} that we do not always have equality. The $G$-stable rank behaves quite differently 
for the horizontal tensor product. We have $\rk^G_{\alpha,\beta}(v\otimes w)=\min\{\rk^G_{\alpha}(v),\rk^{G}_{\beta}(w)\}$ (see Proposition~\ref{prop:horizontal}).
If $d=e$ then there is another way of forming a tensor product.  The tensor product  $v\otimes w$ viewed as
$$
\begin{array}{c}v \\ \otimes\\ w
\end{array}\in 
\begin{array}{c}
V_1\otimes V_2\otimes \cdots \otimes V_d\\ \otimes \\ W_1\otimes W_2\otimes \cdots\otimes W_d
\end{array}\subseteq 
\begin{pmatrix}
V_1\\ \otimes \\ W_1\end{pmatrix}\otimes 
\begin{pmatrix}
V_2\\ \otimes \\ W_2\end{pmatrix}\otimes \cdots\otimes
\begin{pmatrix}
V_d\\ \otimes \\ W_d\end{pmatrix}
$$
will be denoted by $v\boxtimes w$.
We will refer to this operation as a {\em vertical} tensor product or a {\em Kronecker} tensor product. It is clear that $\rk(v\boxtimes w)\leq \rk(v\otimes w)$.
It has long been known that $\rk(v\boxtimes w)$ can be smaller than $\rk(v)\rk(w)$. For example, if $v_1=[1,1,1]+[2,2,1]$, $v_2=[1,1,1]+[2,1,2]$ and $v_3=[1,1,1]+[2,2,1]$
then $v_1\boxtimes v_2\boxtimes v_3$ is the matrix multiplication tensor for $2\times 2$ matrices which has rank $7$ (\cite{Strassen69}),
so $7=\rk(v_1\boxtimes v_2\boxtimes v_3)<\rk(v_1)\rk(v_2)\rk(v_3)=2^3$.
 If $K$ has characteristic 0,  
then we have $\rk^G_{\alpha\beta}(v\boxtimes w)\geq \rk^G_{\alpha}(v)\rk^G_{\beta}(v)$ (Theorem~\ref{theo:supermultiplicative}).
We conjecture that this inequality is also true when $K$ is a perfect field of positive characteristic. The slice rank does not behave as nicely with respect to
vertical tensor product and $\srk(v\boxtimes w)$ could be larger or smaller than $\srk(v)\srk(w)$ (see \cite[Example 5.2]{CVZ17}).

\subsection{$G$-stable rank for complex tensors}
If $K=\C$, then the $G$-stable rank can be computed in a different way. For a finite dimensional complex Hilbert space, we will denote the Hermian form by $\langle\cdot,\cdot\rangle$
and the $\ell_2$ norm (or Frobenius norm) by $\|v\|=\sqrt{\langle v,v\rangle}$.
Suppose that $V_1,V_2,\dots,V_d$ are finite dimensional Hilbert spaces, which
makes $V$ into a Hilbert space. If $A$ is a linear map between finite dimensional Hilbert spaces, then its spectral norm $\|A\|_\sigma$
is the operator norm $\|A\|_\sigma=\max_{v\neq 0}\frac{\|Av\|}{\|v\|}$, which is also the largest singular value of $A$.

 For a tensor $v\in V$, let $\Phi_i(v):(V_1\cdots \otimes \widehat{V}_i\otimes \cdots \otimes V_d)^\star\to V_i$
be the $i$-th flattening. Then the $G$-stable $\alpha$-rank of a tensor $v\in V$ is equal to 
\begin{equation}\label{eq:rkG_C}
\rk^G_\alpha(v)=\sup_{g\in G} \min_i \frac{\alpha_i\|g\cdot v\|^2}{\|\Phi_i(g\cdot v)\|^2_\sigma}
\end{equation}
(see Theorem~\ref{theo:Cstablerank}).
\begin{example}\label{example:3}
Consider again the example $v=[2,1,1]+[1,2,1]+[1,1,2]\in K^{2\times 2\times 2}$ as in Example~\ref{example:2}, but now we will work over $K=\C$. We have $\|v\|=\sqrt{3}$.
The first flattening of $v$ is equal to 
$$
\Phi_1(v)=\left(\begin{array}{cc|cc}
0 & 1 & 1 & 0\\
1 & 0 & 0 & 0
\end{array}\right)
$$
which has singular values $1$ and $\sqrt{2}$. So $\|\Phi_1(v)\|_\sigma=\sqrt{2}$. 
By symmetry, we also have $\|\Phi_2(v)\|_\sigma=\|\Phi_3(v)\|_\sigma=\sqrt{2}$. It follows that
$$
\rk^G(v)=\sup_{g\in G} \min_i \frac{\|g\cdot v\|^2}{\|\Phi_i(g\cdot v)\|^2_\sigma}\geq \min_i\frac{\|v\|^2}{\|\Phi_i(v)\|^2_\sigma}=\textstyle\frac{3}{2}.
$$
\end{example}

\subsection{The Cap Set Problem}\label{sec:capset}
We say that a subset $S$ of an abelian group $A$ does not contain an arithmetic progression (of length 3) if there are no distinct elements $x,y,z\in S$
with $x+z=2y$. For an abelian group $A$, let $r_3(A)$ be the largest cardinality of a subset $S\subseteq A$ without an arithmetic progression.
Finding upper and lower bounds for $r_3(A)$ has been studied extensively in number theory. For the group $A=(\Z/3\Z)^n\cong \F_3^n$ this is known as the Cap Set Problem.
Brown and Buhler \cite{BrownBuhler82} showed that $r_3(\F_3^n)=o(3^n)$ and this was later improved to  $r_3(\F_3^n)=O(3^n/n)$ by Meshulam \cite{Meshulam95} and to $o(3^n/n^{1+\varepsilon})$ by Bateman and Katz \cite{BatemanKatz12}.
Using the polynomial method of Croot,  Lev and Pach \cite{CLP17}, who showed that $r_3((\Z/4\Z)^n)=o(c^n)$ for some $c<4$, Ellenberg and Gijswijt showed in \cite{EllenbergGijswijt17} that $r_3(\F_3^n)\leq 3\theta^n=o(2.756^n)$,
where $\theta <2.756$.  We also have a lower bound $r_3(\F_3^n)=\omega(2.21^n)$ by Edel.  The bound (and the proof) of Ellenberg and Gijswijt is also valid for tri-colored sum-free sets
for which an asymptotic lower bound $\omega(\theta^n)$ was given by Kleinberg, Sawin and Speyer \cite{KSS18}. So for tri-colored sum-free sets, the upper and lower bound have the same exponential growth.

Tao noted that the Ellenberg-Gijswijt proof can be nicely presented using the concept of slice rank. A key idea is to prove the inequality
$r_3(\F_3^n)\leq \slicerk(u^{\boxtimes n})$ where
$$
u=\sum_{\scriptstyle i,j,k\in \Z/3\Z\atop \scriptstyle i+j+k=0} [i,j,k]\in \F_3^{3\times 3\times 3}
$$
and to combine this with asymptotic estimates for the slice rank. 
We will show that $r_3(\F_3^n)\leq \rk^G(u^{\boxtimes n})\leq  \srk(u^{\boxtimes n})$. Using the $G$-stable rank, we get better upper bounds for the cardinality of a cap set (or a tri-colored sum-free set).
Below is a table of the upper bounds we get for $n\leq 20$.
$$
\begin{array}{||r|r||r|r||r|r||r|r||}
\hline
n & \mbox{upper bound} & n & \mbox{upper bound} & n & \mbox{upper bound} & n & \mbox{upper bound}\\ \hline\hline
1 & 2 & 6 & 274 & 11 & 37477 & 16 & 5235597 \\ \hline
2 & 6 &7 & 722 &   12 &  100296 &  17 & 14316784  \\ \hline
3 & 15 & 8 & 1957 & 13 & 266997 &  18 & 38685141\\ \hline
4 & 39 & 9 & 5193 & 14 & 728661 & 19 & 103504935\\ \hline
5 & 105  & 10 & 13770 & 15 & 1961103 &  20 & 283466139\\ \hline\hline
\end{array}
$$

\section{The $G$-stable rank and the Hilbert-Mumford criterion}
\subsection{The Hilbert-Mumford criterion}
We will discuss the $K$-rational version of the Hilbert-Mumford criterion by Kempf \cite{Kempf78}. 
We remind the reader that the base field $K$ is assumed
to be perfect. Suppose that $G$ is a connected reductive algebraic group over a field $K$, $X$ is a separated $K$-scheme of finite type
and $G\times X\to X$ is a $G$-action that is also a morphism of schemes over $K$. 
The multiplicative group is defined as $\Gm=\Spec K[t,t^{-1}]$.  A $1$-parameter subgroup of $G$
is a homomorphism $\lambda:\Gm\to G$ of algebraic groups. We say that this $1$-parameter subgroup of $G$ is $K$-rational
if the homomorphism is a morphism of algebraic varieties defined over $K$.
In the case where $K$ is finite, we caution the reader that the set $G(K)$ of $K$ rational points in $G$ is finite and
may not be Zariski dense in the algebraic group $G$. If $x\in X(K)$ is a $K$-rational point of $X$, then $G\cdot x$ denotes 
a subscheme of $X$ which is not necessarily Zariski closed (even if $G(K)$ is finite). The Zariski closure $\overline{G\cdot x}$ is
a closed subscheme of $X$.

\begin{theorem}[{\cite[Corollary 4.3]{Kempf78}}]\label{theo:Kempf}
Suppose that $x\in X(K)$ is a $K$-rational point, $S\subseteq X$ is a $G$-invariant closed subscheme of $X$
such that $\overline{G\cdot x}\cap S\neq \emptyset$, Then there exists a $K$-rational $1$-parameter subgroup $\lambda:\Gm\to G$
such that $\lim_{t\to 0}\lambda(t)\cdot x=y$ for some $y\in S(K)$.
\end{theorem}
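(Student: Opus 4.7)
The plan is to first establish the geometric (algebraically closed) version of the Hilbert--Mumford criterion, and then use Kempf's optimality/uniqueness argument together with Galois descent to get $K$-rationality.

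For the geometric step I would first base-change to $\overline{K}$. Because $\overline{G\cdot x}\cap S\neq\emptyset$, there is a geometric point $y_0$ in the intersection. Applying the valuative criterion of properness to the $G$-equivariant map $G\to \overline{G\cdot x}$, $g\mapsto g\cdot x$, I can find a complete discrete valuation ring $R\subseteq L$, where $L$ is some field extension of $\overline{K}$, and an element $g(t)\in G(L)$ such that $g(t)\cdot x$ lies in $G(L)\cdot x$ at the generic point and specializes into $S(L)$ at the closed point (so $\vv(g(t)\cdot x-y)>0$ after suitable identification, with $y\in S$). Now I would invoke the Iwahori/Cartan decomposition $G(L)=G(R)\cdot T(L)\cdot G(R)$ for a maximal split torus $T\subseteq G$, writing $g(t)=k_1(t)\lambda(t)k_2(t)$ with $k_i(t)\in G(R)$ and $\lambda$ a cocharacter of $T$. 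Setting $x':=k_2(0)\cdot x$ (which lies in the $G$-orbit of $x$, or at worst its closure), one checks that $\lim_{t\to 0}\lambda(t)\cdot x'$ exists and lies in $S$. A small further argument transports this back so that the starting point is $x$ itself: since the closed fibre of $g(t)\cdot x$ lies in $S$ and $k_1(0)$ preserves $S$ (as $S$ is $G$-invariant), the limit of $\lambda(t)k_2(t)\cdot x$ is again in $S$, which in turn forces a 1-parameter subgroup to work from $x$ after absorbing $k_2$ into a conjugate of $\lambda$.

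The second, harder step is to upgrade a geometric destabilizing 1-parameter subgroup to a $K$-rational one. For this I would follow Kempf's strategy: define a numerical invariant $\mu(\lambda,x)$ measuring how fast $\lambda(t)\cdot x$ approaches $S$, normalized by a $G$-invariant length function $\|\lambda\|$ on the set of 1-parameter subgroups (coming, say, from a $W$-invariant inner product on the cocharacter lattice of a maximal torus). One then considers the set $\Lambda(x,S)$ of 1-parameter subgroups that drive $x$ into $S$, and shows the supremum of $\mu(\lambda,x)/\|\lambda\|$ is attained, with a canonical parabolic subgroup $P(x)$ of $G$ acting transitively on the set of optima. The key point, and the main obstacle, is establishing that $P(x)$ is canonically attached to $x$ and $S$, hence preserved by the absolute Galois group $\operatorname{Gal}(\overline{K}/K)$, so that $P(x)$ is defined over $K$.

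Finally, I would use that $P(x)$ is a parabolic $K$-subgroup of the reductive group $G$ over the perfect field $K$. Standard reductive group theory then supplies a maximal $K$-split torus $T\subseteq P(x)$ defined over $K$, and the Galois-invariant optimum corresponds to a cocharacter $\lambda$ of $T$ that is defined over $K$ (here perfectness is essential, to ensure that averaging/descent of the cocharacter over the finite Galois group of a splitting field produces a $K$-rational element rather than merely a $K^{1/p^\infty}$-rational one). This $\lambda$ is the desired $K$-rational 1-parameter subgroup, with limit point $y:=\lim_{t\to 0}\lambda(t)\cdot x$ automatically lying in $S(K)$ because both $x$ and $\lambda$ are $K$-rational.
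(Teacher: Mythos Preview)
The paper does not give its own proof of this statement: Theorem~\ref{theo:Kempf} is simply quoted as \cite[Corollary 4.3]{Kempf78} and then used as a black box. So there is nothing in the paper to compare your proposal against.

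That said, your outline is essentially the standard Kempf argument and is broadly correct in spirit. A couple of points deserve tightening. In the geometric step, the map $G\to \overline{G\cdot x}$ is not proper, so the valuative criterion of properness does not apply directly; the usual argument instead picks a curve in $\overline{G\cdot x}$ through a point of $S$, lifts its generic point to $G$, and then invokes the Iwahori/Cartan decomposition of $G\big(K(\!(t)\!)\big)$. Your ``small further argument'' absorbing $k_2$ is also a bit loose: one really conjugates $\lambda$ by $k_2(0)$ and uses that $k_2(t)k_2(0)^{-1}\in G(R)$ has trivial reduction. In the descent step, your description of the optimal parabolic $P(x)$ and its Galois invariance is exactly Kempf's mechanism; the only subtlety you might flag more explicitly is that one needs the length function on cocharacters to be defined over $K$ (i.e., invariant under both the Weyl group and the Galois action), which Kempf arranges. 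With these caveats, your sketch is a faithful summary of the proof in \cite{Kempf78}.
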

In our situation, $X=V$ is a $K$-vector space which is a representation of $G$, and $S=\{0\}$. A vector $v\in V$ is called
$G$-semi-stable if $\overline{G\cdot v}$ does not contain $0$. Now Theorem~\ref{theo:Kempf} implies:

\begin{corollary}
If $G$ is a connected reductive algebraic group, $v\in V$ and $0\in \overline{G\cdot v}$ then there exists a $K$-rational $1$-parameter subgroup $\lambda:\Gm\to G$ such that $\lim_{t\to 0} \lambda(t)\cdot v=0$.
\end{corollary}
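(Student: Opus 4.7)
The plan is to apply Theorem~\ref{theo:Kempf} directly with $X=V$ and $S=\{0\}$. The data required by that theorem are a connected reductive $K$-group acting on a separated $K$-scheme of finite type, a $K$-rational point of $X$, and a $G$-invariant closed subscheme $S$ meeting the orbit closure of the point. Here $V$, as a finite dimensional $K$-vector space, is isomorphic to $\A^n_K$ and hence is an affine, separated, finite type $K$-scheme, and the linear action of $G$ on $V$ is a morphism of $K$-schemes by the definition of an algebraic representation. The vector $v\in V=V(K)$ is our $K$-rational point.

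Next I would verify the two hypotheses on $S=\{0\}$. Since $V$ is a linear representation of $G$, every $g\in G$ satisfies $g\cdot 0=0$, so the closed subscheme $\{0\}\subseteq V$ is $G$-invariant. The condition $\overline{G\cdot v}\cap\{0\}\neq\emptyset$ is simply a restatement of the hypothesis $0\in\overline{G\cdot v}$.

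With the hypotheses of Theorem~\ref{theo:Kempf} in place, the theorem produces a $K$-rational $1$-parameter subgroup $\lambda:\Gm\to G$ and a $K$-rational point $y\in S(K)$ with $\lim_{t\to 0}\lambda(t)\cdot v=y$. But $S(K)=\{0\}$, forcing $y=0$, which is exactly the desired conclusion. The only real content is checking that the linear setup fits the hypotheses of Kempf's theorem, and there is no substantive obstacle; this corollary is a direct specialization, which is also the classical formulation of the Hilbert--Mumford criterion for non-semistability.
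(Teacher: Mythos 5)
Your proposal is correct and follows exactly the paper's (very brief) argument: specialize Theorem~\ref{theo:Kempf} to $X=V$ and $S=\{0\}$ and read off the conclusion. You have simply spelled out the routine hypothesis checks that the paper leaves implicit.
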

 A $1$-parameter subgroup of $\GL_n$
is of the form
$$
\lambda(t)=C\begin{pmatrix}
t^{x(1)} & & & \\
 & t^{x(2)} & & \\
 & & \ddots & \\
 & & & t^{x(n)}
 \end{pmatrix}C^{-1}
 $$
 with $C\in \GL_n$ and $x(1),x(2),\dots,x(n)\in \Z$. In particular, we can view $\lambda$ as an element of $\GL_n(K[t,t^{-1}])$
 where $K[t,t^{-1}]\subseteq \LS$ is the ring of Laurent polynomials.
 If $v=(v_1\ v_2\ \cdots\ v_n)^t\in K^n$ then  $\lim_{t\to 0}\lambda(t)\cdot v=0$ if for all $i$, we have $v_i=0$ or $x(i)>0$.
 We will take $V=V_1\otimes V_2\otimes \cdots\otimes V_d$ and $G=\GL(V_1)\times \GL(V_2)\times \cdots\times \GL(V_d)$.
 A $1$-parameter subgroup of $G$ is of the form $(\lambda_1(t),\lambda_2(t),\dots,\lambda_d(t))$ where
 $\lambda_i(t):\Gm\to \GL(V_i)$ is a $1$-parameter subgroup for all $i$.

For an integer vector $\alpha=(\alpha_1,\alpha_2,\dots,\alpha_d)\in \Z^d$ we define
a homomorphism of algebraic groups $\det^{\alpha}:G\to \Gm$ by $(A_1,\dots,A_d)\mapsto \prod_{i=1}^d \det(A_i)^{\alpha_i}$.
This homomorphism corresponds to a $1$-dimensional representation of $G$, which we will also denote by $\det^{\alpha}$.
We will now relate the $G$-stable rank to semi-stability in Geometric Invariant Theory.
\begin{proposition}\label{prop:stable}
Suppose that $\beta\in \Q_{>0}^d$, $p$ is a nonnegative integer and $q$ is a positive integer with $q\beta\in\Z^n$.
We define a representation $W$ by
$$
\textstyle W=\left(V^{\otimes p}\otimes \det^{-q\beta} \right)\oplus V_1^{n_1}\oplus V_2^{n_2}\oplus \cdots\oplus V_d^{n_d}.
$$
and choose $u_i\in V_i^{n_i}\cong K^{n_i\times n_i}$ of maximal rank $n_i$ for every $i$.
Then we have $\rk^G_\beta(v)\geq \frac{p}{q}$ if and only if $w=(v^{\otimes p}\otimes 1,u_1,\dots,u_d)$ is $G$-semi-stable.
\end{proposition}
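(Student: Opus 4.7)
The strategy is to apply Kempf's theorem (Theorem~\ref{theo:Kempf}) to the representation $W$ with $S = \{0\}$: the vector $w$ is $G$-semi-stable exactly when no $K$-rational $1$-PSG $\lambda$ of $G$ satisfies $\lim_{t\to 0} \lambda(t)\cdot w = 0$. I will unpack this limit condition componentwise, show it is equivalent to the existence of a $1$-PSG with slope $\mu_\beta(\lambda, v) < p/q$, and then invoke Theorem~\ref{theo:1PSG} (which reduces the computation of $\rk^G_\beta$ to $1$-PSGs) to conclude.

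Fix a $K$-rational $1$-PSG $\lambda = (\lambda_1, \ldots, \lambda_d)$ and choose bases of the $V_i$ in which $\lambda_i(t) = \operatorname{diag}(t^{x_i(1)}, \ldots, t^{x_i(n_i)})$. On the summand $V_i^{n_i}$, the $j$-th row of $\lambda_i(t)\cdot u_i$ is $t^{x_i(j)}$ times the $j$-th row of $u_i$; since $u_i$ is invertible no such row vanishes, so $\lim_{t\to 0} \lambda_i(t)\cdot u_i = 0$ forces every $x_i(j) > 0$. Hence $\lambda \in G(K[t]) \subseteq G(\PS)$ and $\vv(\det \lambda_i(t)) = \sum_j x_i(j) > 0$.

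On the summand $V^{\otimes p}\otimes \det^{-q\beta}$, the action of $\lambda(t)$ is the tensor product of the usual action on $V^{\otimes p}$ with multiplication by $\prod_i \det\lambda_i(t)^{-q\beta_i}$ on the one-dimensional factor. Using $\vv((\lambda\cdot v)^{\otimes p}) = p\,\vv(\lambda\cdot v)$ we obtain
$$\vv\bigl(\lambda(t) \cdot (v^{\otimes p}\otimes 1)\bigr) = p\,\vv(\lambda(t)\cdot v) - q\sum_{i=1}^d \beta_i\,\vv(\det\lambda_i(t)),$$
and strict positivity of this valuation is equivalent to $\mu_\beta(\lambda, v) < p/q$ (note $\vv(\lambda\cdot v) > 0$ is automatic once all weights are positive).

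Putting it together, $w$ fails to be $G$-semi-stable iff there is a $K$-rational $1$-PSG $\lambda \in G(\PS)$ with $\vv(\lambda\cdot v) > 0$ satisfying $\mu_\beta(\lambda, v) < p/q$, and by Theorem~\ref{theo:1PSG} this is the same as $\rk^G_\beta(v) < p/q$. The main subtlety is the role of the auxiliary vectors $u_i$: their invertibility is exactly what calibrates Kempf's semi-stability criterion to the $G(\PS)$-infimum defining $\rk^G_\beta$, by ruling out destabilizing $1$-PSGs that would otherwise have negative or zero weights and lie outside of $G(\PS)$.
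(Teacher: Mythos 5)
Your analysis of the ``$w$ not semi-stable $\Rightarrow \rk^G_\beta(v)<p/q$'' direction is essentially the paper's argument and is correct: Kempf produces a $K$-rational $1$-parameter subgroup killing $w$, the invertibility of the $u_i$ forces all weights $x_i(j)>0$ (hence $\lambda\in G(K[t])\subseteq G(\PS)$), and reading off the valuation on the $V^{\otimes p}\otimes\det^{-q\beta}$ summand gives $\mu_\beta(\lambda,v)<p/q$. Since a $1$-PSG with no poles is one of the competitors in the infimum defining $\rk^G_\beta$, this already yields $\rk^G_\beta(v)<p/q$ with no further appeal.

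The problem is the other direction. You write ``and by Theorem~\ref{theo:1PSG} this is the same as $\rk^G_\beta(v)<p/q$,'' but Theorem~\ref{theo:1PSG} is itself deduced from Proposition~\ref{prop:stable} in the paper, so invoking it here is circular. Concretely: to show $\rk^G_\beta(v)<p/q\Rightarrow w$ not semi-stable, you only know a priori that there exists some $g(t)\in G(\PS)$ --- not necessarily a $1$-PSG --- with $\mu_\beta(g(t),v)<p/q$. The assertion that this can be upgraded to a $1$-PSG is exactly the content of Theorem~\ref{theo:1PSG}, which you cannot yet use.

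The fix does not require $1$-PSGs at all in this direction. Given $g(t)\in G(\PS)$ with $\mu_\beta(g(t),v)<p/q$, the first component of $g(t)\cdot w$ has strictly positive $t$-valuation, while the remaining components $g_i(t)\cdot u_i$ have entries in $K[[t]]$ and hence admit (possibly nonzero) limits $g_i(0)\cdot u_i$ as $t\to 0$. Thus $\lim_{t\to 0}g(t)\cdot w=(0,g_1(0)\cdot u_1,\dots,g_d(0)\cdot u_d)\in\overline{G\cdot w}$, and scaling the $V_i^{n_i}$-coordinates to zero (e.g.\ via the central $\Gm$ acting trivially on the $\det^{-q\beta}$-twisted factor) shows $0\in\overline{G\cdot w}$, so $w$ is not $G$-semi-stable. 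Replacing your appeal to Theorem~\ref{theo:1PSG} by this direct degeneration argument removes the circularity, and it is also this $G(\PS)\to 1$-PSG asymmetry between the two directions that then yields Theorem~\ref{theo:1PSG} as a corollary.
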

\begin{proof}
Suppose that $\rk^G_{\beta}(v)<\frac{p}{q}$. Then there exists $g(t)=(g_1(t),\dots,g_d(t))\in G(K[[t]])$ with 
$$\textstyle\vv(g(t)\cdot (v^{\otimes p}\otimes 1))=p\vv(g(t)\cdot v)-\sum_{i=1}^dq\beta_i\vv(g_i(t))>0.
$$
The limit
$\lim_{t\to 0} g(t)\cdot w=(0,g(0)\cdot u)=(0,g(0)\cdot u_1,\dots,g(0)\cdot u_d)$
lies in the closure of the orbit $G\cdot w$. Since $0$ lies in the orbit closure of $(0,g(0)\cdot u)$,
it also lies in the orbit closure of $w$. We conclude that $w$ is not $G$-semistable.

Now suppose that $w$ is not $G$-semistable. By the Hilbert-Mumford criterion, there exists  a 1-parameter subgroup $\lambda(t)=(\lambda_1(t),\dots,\lambda_d(t))\in G(K[t,t^{-1}])$ of $G$ such that $\lim_{t\to 0}\lambda(t)\cdot w=0$.
This implies that $\lim_{t\to 0} \lambda_i(t)\cdot u_i=0$.
Since $u_i$ has maximal rank, we get $\lim_{t\to 0}\lambda_i(t)=0$ and  $\lambda_i(t)\in \GL(V_i,K[t])$. So we have $\lambda(t)\in G(K[t])\subseteq G(K[[t]])$.
We also get
$$
\textstyle 0<\vv(\lambda(t)\cdot (v^{\otimes p}\otimes 1))=p\vv(\lambda(t)\cdot v)-\sum_{i=1}^dq\beta_i\vv(\lambda_i(t))
$$
and therefore
$$
\mu_\beta^G(v)=\frac{\sum_{i=1}^d \beta_i\vv(\lambda_i(t))}{\vv(\lambda(t)\cdot v}<\textstyle\frac{p}{q}.
$$
We conclude that $\rk^G_\beta(v)<\frac{p}{q}$.

\end{proof}

\begin{theorem}\label{theo:1PSG}
If $\alpha\in \R_{>0}^d$, then the $G$-stable rank $\rk^G_\alpha(v)$ is the infimum of $\mu_\alpha(\lambda(t),v)$
where $\lambda(t)\in G(K[t])$ is a $1$-parameter subgroup of $G$ and $\vv(\lambda(t)\cdot v)>0$.
\end{theorem}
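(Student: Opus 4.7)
The plan is to prove equality in two stages: first for rational weights $\beta \in \Q^d_{>0}$, by repackaging Proposition~\ref{prop:stable} and applying Kempf's theorem, and then to extend to arbitrary real $\alpha \in \R^d_{>0}$ by a concavity/continuity argument. The inequality $\rk^G_\alpha(v) \leq \inf_\lambda \mu_\alpha(\lambda, v)$ is immediate since any 1-parameter subgroup $\lambda(t)$ lies in $G(K[t]) \subseteq G(\PS)$, so the heart of the matter is the reverse inequality: every witness in $G(\PS)$ to a small slope can be matched by a 1-parameter subgroup.

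For the rational step, I fix $\beta \in \Q^d_{>0}$, write $r = \rk^G_\beta(v)$, and let $\epsilon > 0$. Choose a rational $p/q \in (r, r+\epsilon)$ with $q$ divisible by a common denominator of the $\beta_i$, so that $q\beta \in \Z^d$; this is possible by taking $q$ sufficiently large. Since $r < p/q$, Proposition~\ref{prop:stable} asserts that the vector $w = (v^{\otimes p} \otimes 1, u_1, \ldots, u_d)$ in the representation $W$ of $G$ is \emph{not} $G$-semistable. Because $G = \GL(V_1) \times \cdots \times \GL(V_d)$ is connected reductive over the perfect field $K$, Theorem~\ref{theo:Kempf} (applied with the closed subscheme $S = \{0\}$) supplies a $K$-rational 1-parameter subgroup $\lambda \colon \Gm \to G$ with $\lim_{t \to 0} \lambda(t) \cdot w = 0$. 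Exactly as in the second half of the proof of Proposition~\ref{prop:stable}, the vanishing on the $u_i$-coordinates together with their maximal rank forces $\lambda_i(t) \in \GL(V_i, K[t])$, while the vanishing on the first coordinate gives $p\, \vv(\lambda(t) \cdot v) - q \sum_i \beta_i \vv(\det \lambda_i(t)) > 0$, i.e.\ $\mu_\beta(\lambda, v) < p/q < r + \epsilon$. Letting $\epsilon \to 0$ yields $\inf_\lambda \mu_\beta(\lambda, v) \leq r$, hence equality for rational $\beta$.

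For the passage to real $\alpha$, I observe that for each fixed $g(t) \in G(\PS)$ (resp.\ each fixed 1-parameter subgroup $\lambda$) satisfying $\vv(g(t)\cdot v) > 0$, the slope $\mu_\alpha(g(t), v)$ is a linear function of $\alpha$. Both $\alpha \mapsto \rk^G_\alpha(v)$ and $\alpha \mapsto \inf_\lambda \mu_\alpha(\lambda, v)$ are therefore pointwise infima of families of linear functions of $\alpha$, hence concave on $\R^d_{>0}$. Both are nonnegative and finite (the choice $g(t) = (tI_{V_1}, \ldots, tI_{V_d}) \in G(K[t])$ gives an upper bound $\sum_i \alpha_i \dim V_i / d$). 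A concave function that is finite on an open convex set is automatically continuous there, so the two functions, known to agree on the dense subset $\Q^d_{>0}$ by the previous step, agree on all of $\R^d_{>0}$. The hard part is the rational step: it is really where the content of the theorem sits, combining the semistability reformulation of Proposition~\ref{prop:stable} with Kempf's $K$-rational refinement of the Hilbert-Mumford criterion; the real-to-rational transfer is then just concavity plus density of $\Q^d_{>0}$ in $\R^d_{>0}$.
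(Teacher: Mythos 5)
Your proof is correct and rests on the same two pillars as the paper's: reducing to the semistability reformulation of Proposition~\ref{prop:stable} for a rational weight $\beta$ close to $\alpha$, and then invoking Kempf's $K$-rational Hilbert--Mumford criterion to produce the polynomial $1$-parameter subgroup. The only difference is cosmetic: where the paper directly approximates $\alpha$ from above by rational $\beta > \alpha$ and uses the monotonicity of $\mu$ in the weight, you instead establish equality for all rational $\beta$ and then transfer to real $\alpha$ via concavity and continuity of pointwise infima of linear functions — a slightly more structural packaging of the same idea.
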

\begin{proof}
Asume that  $\rk^G_{\alpha}(v)<r$ for some rational number $r$. There exists a $\beta\in \Q_{>0}^d$
with $\beta-\alpha\in \R_{>0}^d$ and $\rk^G_{\beta}(v)<r$. We can write $r=\frac{p}{q}$ where $p$ and $q$ are positive integers
such that $q\beta\in \Z^d$. By Proposition~\ref{prop:stable}, $w$ is not $G$-semistable and from the proof of Proposition~\ref{prop:stable} follow that
 there exists a $1$-parameter subgroup $\lambda(t)\in G(K[t])$
such that $\mu_{\alpha}(\lambda(t),v)\leq \mu_\beta(\lambda(t),v)<r$.
This shows that even if $\lambda(t)\in G(K[t])$ is a $1$-parameter subgroup of $G$, $\mu_{\alpha}(\lambda(t),v)$ can get arbitrarily close to $\rk^G_{\alpha}(v)$.
\end{proof}
\subsection{The relation between $G$-stable rank and $\SL$-stability}
First we prove that the $G$-stable rank does not change when we extend the field.
\begin{theorem}\label{theo:FieldIndependent}
Suppose that $v\in V=V_1\otimes_KV_2\otimes_K\otimes \cdots\otimes_K V_d$ where $V_1,V_2,\dots,V_d$ are finite dimensional $K$-vector spaces, 
and $\overline{v}=1\otimes v\in \overline{V}=L\otimes_K V\cong \overline{V}_1\otimes_L\overline{V}_2\otimes_L\otimes \cdots\otimes_L \overline{V}_d$
with $\overline{V}_i=L\otimes_K V_i$ for all $i$. Then we have $\rk_\alpha^G(v)=\rk_\alpha^G(\overline{v})$. In other words, the $G$-stable rank does not change
under base field extension.
\end{theorem}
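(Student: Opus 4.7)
My plan is to prove the two inequalities separately. The direction $\rk_\alpha^G(\overline v)\leq \rk_\alpha^G(v)$ is essentially tautological: any $g(t)\in G(\PS)$ with $\vv(g(t)\cdot v)>0$ base-changes to $1\otimes g(t)\in \GL(\overline V_1,L[[t]])\times\cdots\times\GL(\overline V_d,L[[t]])$, and the quantities $\vv(\det g_i(t))$ and $\vv((1\otimes g(t))\cdot \overline v)=\vv(g(t)\cdot v)$ are unchanged, so the same slope $\mu_\alpha$ is realized over $L$. Taking infima yields the inequality.

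For the opposite inequality $\rk_\alpha^G(v)\leq \rk_\alpha^G(\overline v)$, I first reduce to rational $\alpha$. For fixed $g(t)$ the slope $\mu_\alpha(g(t),v)$ is linear in $\alpha$ with non-negative coefficients (each $\vv(\det g_i(t))\geq 0$ since $g_i(t)\in\GL(V_i,\PS)$ has power-series entries), so $\alpha\mapsto\rk_\alpha^G(v)$ is the infimum of non-negative linear functions, hence concave and therefore continuous on $\R_{>0}^d$. It thus suffices to establish equality when $\alpha\in \Q_{>0}^d$. Fix such an $\alpha$ and fix an arbitrary positive rational $r=p/q$ with $q\alpha\in \Z^d$. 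By Proposition~\ref{prop:stable}, the condition $\rk_\alpha^G(v)\geq r$ is equivalent to the auxiliary vector $w=(v^{\otimes p}\otimes 1,u_1,\dots,u_d)\in W$ being $G$-semi-stable over $K$, and likewise $\rk_\alpha^G(\overline v)\geq r$ is equivalent to $\overline w=1\otimes w$ being $G$-semi-stable over $L$ (the same construction works verbatim after base change, since the auxiliary matrices $u_i$ of maximal rank have maximal rank over $L$ as well).

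The remaining step, which I expect to be the main obstacle, is to verify that $G$-semi-stability is preserved under base field extension: $w$ is $G$-semi-stable over $K$ if and only if $\overline w$ is $G$-semi-stable over $L$. This is a standard consequence of the fact that $G=\prod_i\GL(V_i)$ is a split reductive $K$-group, so the formation of the polynomial invariant ring commutes with the flat base change $K\to L$, that is, $L\otimes_K K[W]^G=L[\overline W]^G$. A point is semi-stable precisely when some non-constant homogeneous $G$-invariant does not vanish at it; a non-vanishing $L$-invariant on $\overline w$ may be written as an $L$-linear combination of $K$-invariants, at least one of which must then also be non-vanishing on $w$, and the other implication is immediate. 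Once this base-change invariance is settled, combining with Proposition~\ref{prop:stable} gives $\rk_\alpha^G(v)\geq r\iff \rk_\alpha^G(\overline v)\geq r$ for every positive rational $r$, hence equality for rational $\alpha$, and continuity extends the equality to all $\alpha\in\R_{>0}^d$.
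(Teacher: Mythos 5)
Your proposal is correct and follows essentially the same strategy as the paper's proof: reduce to rational $\alpha$, invoke Proposition~\ref{prop:stable} to translate the rank bound into $G$-semistability of an auxiliary vector $w$, and use the fact that semistability is preserved under base field extension. The paper passes from rational to arbitrary $\alpha$ via a monotone supremum over rational $\beta\leq\alpha$ rather than your concavity/continuity argument, and it asserts the base-change invariance of semistability without the invariant-ring justification you supply, but these are cosmetic differences in an otherwise identical argument.
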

\begin{proof}
If $\beta\in \Q_{>0}^d$ then we can follow the set up in Proposition~\ref{prop:stable}, where $p,q\in \Z$, $p\geq 0$, $q>0$ and $q\beta\in \Z^d$. We choose $u_i\in V_i^{n_i}$ invertible for all $i$, and define
$$
w=(v^{\otimes p}\otimes 1,u_1,\dots,u_d)\in \textstyle W=\left(V^{\otimes p}\otimes_K \det^{-q\beta} \right)\oplus V_1^{n_1}\oplus V_2^{n_2}\oplus \cdots\oplus V_d^{n_d}.
$$
Using the base field extension, we get
$$
\overline{w}=(\overline{v}\otimes 1,\overline{u}_1,\dots,\overline{u}_d)\in \textstyle L\otimes_KW=\left(\overline{V}^{\otimes p}\otimes_L \det^{-q\beta} \right)\oplus \overline{V}_1^{n_1}\oplus \overline{V}_2^{n_2}\oplus \cdots\oplus \overline{V}_d^{n_d}.
$$
Now $G$-semistability does not chance after base field extension. So $w$ is $G$-semistable if and only if $\overline{w}$ is $G$-semistable. So we have
$$
\textstyle\rk_\beta^G(w)\geq \frac{p}{q}\Leftrightarrow \mbox{$w$ is $G$-semistable}\Leftrightarrow\mbox{$\overline{w}$ is $G$-semi-stable}\Leftrightarrow \rk_\beta^G(\overline{w})\geq \frac{p}{q}.
$$
This proves that $\rk_\beta^G(w)=\rk_\beta^G(\overline{w})$.
Since $\rk_\alpha^G(w)$ is the supremum of $\rk_\beta^G(w)$ over all $\beta\in \Q_{>0}^d$ with $\beta\leq \alpha$,
we also get $\rk_\alpha^G(w)=\rk_\alpha^G(w)$ for all $\alpha\in \R_{>0}^d$.

\end{proof}

\begin{proposition}
Suppose that $\alpha=(\frac{1}{n_1},\frac{1}{n_2},\dots,\frac{1}{n_d})$ where $n_i=\dim V_i$. For $v\in V=V_1\otimes V_2\otimes \cdots\otimes V_d$ we have $\rk^G_\alpha(v)\leq 1$. Moreover, $\rk^G_{\alpha}(v)=1$ if and only if $v$ 
is semi-stable with respect to the group $H=\SL(V_1)\times \SL(V_2)\times \cdots \times \SL(V_d)$.
\end{proposition}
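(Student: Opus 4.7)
First I would dispense with the upper bound $\rk^G_\alpha(v) \leq 1$ by exhibiting an explicit witness: the central scaling $g(t) = (tI_{V_1}, \ldots, tI_{V_d}) \in G(\PS)$. Then $g(t) \cdot v = t^d v$, so $\vv(g(t) \cdot v) = d > 0$ (when $v \neq 0$; the case $v=0$ is trivial), while $\vv(\det g_i(t)) = n_i$, giving $\sum_i \alpha_i \vv(\det g_i(t)) = \sum_i 1 = d$ and hence $\mu_\alpha(g(t), v) = 1$. It then suffices to prove $\rk^G_\alpha(v) < 1$ if and only if $v$ is not $H$-semistable.

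For the implication $\rk^G_\alpha(v) < 1 \Rightarrow v$ is not $H$-semistable: by Theorem~\ref{theo:1PSG}, I choose a $1$-parameter subgroup $\lambda(t) \in G(K[t])$ with $M := \vv(\lambda(t) \cdot v) > 0$ and $\mu_\alpha(\lambda(t), v) < 1$. Fix bases of the $V_i$ diagonalizing each $\lambda_i$ with nonnegative integer weights $x_i(k)$, and set $s_i := \sum_k x_i(k) = \vv(\det \lambda_i)$; the slope condition is $\sum_i s_i/n_i < M$. Pick a positive integer $N$ divisible by every $n_i$ and define rescaled weights $y_i(k) := N x_i(k) - N s_i/n_i \in \Z$. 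Then $\sum_k y_i(k) = 0$, so the diagonal $1$PS $\mu_i$ with weights $y_i(k)$ lies in $\SL(V_i)$. Expanding $v = \sum_J c_J [j_1, \ldots, j_d]$ in these bases, each nonvanishing monomial of $\mu(t) \cdot v$ carries exponent $\sum_i y_i(j_i) = N\bigl(\sum_i x_i(j_i) - \sum_i s_i/n_i\bigr) \geq N(M - \sum_i s_i/n_i) > 0$, so $\lim_{t \to 0} \mu(t) \cdot v = 0$ and $v$ is not $H$-semistable.

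For the converse: if $v$ is not $H$-semistable, Kempf's rational Hilbert--Mumford criterion (Theorem~\ref{theo:Kempf}) applied to the connected reductive group $H$ produces a $K$-rational $1$PS $\mu : \Gm \to H$ with $\lim_{t \to 0} \mu(t) \cdot v = 0$. In suitable bases $\mu_i$ is diagonal with integer weights $y_i(k)$ summing to $0$, and $M := \vv(\mu(t) \cdot v) > 0$. Since some $y_i(k)$ may be negative, $\mu$ need not lie in $G(\PS)$; I would shift by central scalars, setting $c_i := \max\{0,\, -y_i(k) : k\}$ and $\lambda_i(t) := t^{c_i} \mu_i(t)$. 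The shifted $\lambda(t) \in G(K[t])$ has $\vv(\det \lambda_i) = n_i c_i$ and $\vv(\lambda(t) \cdot v) = M + \sum_i c_i$, so
$$\mu_\alpha(\lambda(t), v) = \frac{\sum_i c_i}{M + \sum_i c_i} < 1,$$
giving $\rk^G_\alpha(v) < 1$.

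The main obstacle is essentially bookkeeping: a $1$PS of $H$ can have negative weights and thus fall outside $G(\PS)$, while a $1$PS in $G(K[t])$ generally lacks trace-zero weight vectors. The central shift by $c_i$ in one direction and the integer rescaling by $N$ in the other resolve both mismatches cleanly, and the crucial point is that the choice $\alpha_i = 1/n_i$ is exactly the normalization under which the central scaling contributes slope $1$; accordingly ``slope strictly less than $1$'' corresponds precisely to a genuinely $\SL$-type destabilization.
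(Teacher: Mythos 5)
Your proof is correct and follows essentially the same route as the paper: the upper bound via the central scaling with slope $1$, then shifting a destabilizing $1$-parameter subgroup of $H$ by central factors $t^{c_i}$ to land in $G(K[t])$ with slope $\sum c_i/(M+\sum c_i)<1$, and conversely subtracting the average weight $s_i/n_i$ (after clearing denominators) from a slope-$<1$ polynomial $1$-parameter subgroup to obtain one in $H$ that kills $v$. The only cosmetic difference is that you spell out the ``obvious'' upper bound and the choice of the shifts $c_i$ explicitly, whereas the paper leaves those to the reader.
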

\begin{proof}
The inequality $\rk_\alpha^G(v)\leq 1$ is obvious.
Suppose that $v\in V$ is not $H$-semi-stable. Then there exists a $1$-parameter subgroup $\lambda(t)=(\lambda_1(t),\dots,\lambda_d(t)):\Gm\to H$
with $\lim_{t\to 0}\lambda(t)\cdot v=0$.  
We can choose $c_1,c_2,\dots,c_d$ such that $\lambda'(t)=(t^{c_1}\lambda_1(t),\dots,t^{c_d}\lambda_d(t))\in G(K[t])$.
Note that $\det(t^{c_i}\lambda_i(t))=\det(t^{c_i}I_{n_i})\det(\lambda_i(t))=t^{c_in_i}$.
Now we have have $\vv(\lambda'(t)\cdot v)=s+c_1+c_2+\cdots+c_d$
and
$$
\mu(\lambda'(t),v)=\frac{\sum_{i=1}^d \frac{1}{n_i}\vv(\det (t^{c_i}\lambda_i(t)) )}{\vv(\lambda'(t)\cdot v)}=\frac{\sum_{i=1}^d c_i}{s+\sum_{i=1}^d c_i} <1.
$$
This proves that $\rk^G_\alpha(v)<1$.
 
Conversely, suppose that $\rk^G_\alpha(v)<1$. Choose a polynomial $1$-parameter subgroup of $G$ such that $\vv(\lambda(t)\cdot v)=s>0$ and
 $\mu_{\alpha}(\lambda(t),v)<1$.
Let $c_i=\vv(\det \lambda_i(t))$. Then we have $\mu_\alpha(\lambda(t),v)=\sum_{i=1}^d \frac{c_i}{n_i}<s$.
After replacing $t$ by $t^{k}$ for some positive integer $k$ we may assume that $\frac{c_i}{n_i}\in \Z$ for all $i$.
Let $\lambda'(t)=(t^{-c_1/n_1}\lambda_1(t),t^{-c_2/n_2}\lambda_2(t),\dots,t^{-c_d/n_d}\lambda_d(t))$. Then
$\lambda'(t)$ is a $1$-parameter subgroup of $H$
and $\vv(\lambda'(t)\cdot v)=s-\sum_{i=1}^d \frac{c_i}{n_i}>0$, so $\lim_{t\to 0}\lambda'(t)\cdot v=0$.
This shows that $v$ is $H$-unstable.
\end{proof}
\subsection{The $G$-stable rank and the non-commutative rank}
The non-commutative rank is defined as the rank
of $A(t)=t_1A_1+t_2A_2+\cdots+t_mA_m$ where $t_1,t_2,\dots,t_m$
are variables in the free skew field $R=K\!\!<\!\!\!\!\!(\,\,t_1,t_2,\dots,t_m\!>\!\!\!\!)\ $ and $A(t)$ is viewed as a $p\times q$ matrix with entries in $R$ (see~\cite{FortinReutenauer04,Cohn95} for more on free skew fields).
We will use the following equivalent definition (see~\cite{FortinReutenauer04}):
\begin{definition}
Suppose that $A_1,A_2,\dots,A_m$ are $p\times q$ matrices.
Then the the non-commutative rank $\ncrk(A)$ of $A=(A_1,\dots,A_m)$
is equal to the maximal value of
$$
q+\dim \sum_{i=1}^m A_i(W)-\dim W
$$
over all subspaces $W\subseteq K^q$.
\end{definition}

It was shown in \cite{IQS17} that the non-commutative rank of $A$ is also equal to maximum of
$$
\frac{\rk(\sum_{i=1}^m T_i\boxtimes A_i)}{d}
$$
where $d$ is a positive integer, $T_1,T_2,\dots,T_m$ are $d\times d$ matrices, and $\boxtimes$ is the Kronecker product of two matrices (so $T_i\boxtimes A_i$ is a $dp\times dq$-matrix).

The non-commutative rank  relates to stability. If $A$ is an $m$-tuple of $n\times n$ matrices (i.e., $p=q=n$) then $\ncrk(A)=n$ if and only if $A$ is semi-stable with respect to the simultaneous left-right action of $\SL_n\times \SL_n$ on $m$-tuples of matrices (see~\cite{IQS17}).

We can relate the non-commutative and $G$-stable rank as follows. First, we will view the $m$-tuple $A=(A_1,A_2,\dots,A_m)$ as a tensor. Using a linear isomorphism $K^p\otimes K^{q}\cong K^{p\times q}$, we can view $A_1,A_2,\dots,A_m$ as tensors in $K^p\otimes K^q$.
The $m$-tuple $A=(A_1,A_2,\dots,A_m)$ corresponds to a tensor
$T_A=\sum_{i=1}^m A_i\otimes [i]\in K^p\otimes K^q\otimes K^m$.
\begin{lemma}
The non-commutative rank is the smallest value of $r+s$
for which there exist linearly independent vectors $v_1,\dots,v_r\in K^p$ and linearly independent vectors $w_1,\dots,w_s\in K^q$ with
\begin{equation}\label{eq:TA}
T_A\in \sum_{i=1}^r v_i\otimes K^q\otimes K^m+\sum_{j=1}^s K^p\otimes w_j\otimes K^m. 
\end{equation}
\end{lemma}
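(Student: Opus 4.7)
The plan is to translate the tensor containment~(\ref{eq:TA}) into a condition on each individual matrix $A_i$, and then to recognize the resulting optimization as precisely the equivalent formulation of $\ncrk(A)$ recalled just before the lemma. To this end, write $T_A=\sum_{i=1}^m A_i\otimes[i]$ and group the summands in~(\ref{eq:TA}) by the third tensor factor $[i]$. Since $\sum_{k=1}^r v_k\otimes K^q\otimes K^m=U\otimes K^q\otimes K^m$ and $\sum_{l=1}^s K^p\otimes w_l\otimes K^m=K^p\otimes W'\otimes K^m$, where $U:=\operatorname{span}(v_1,\ldots,v_r)\subseteq K^p$ and $W':=\operatorname{span}(w_1,\ldots,w_s)\subseteq K^q$, the containment is equivalent to requiring, for every $i\in\{1,\ldots,m\}$, that the matrix $A_i$ lie in the subspace $U\otimes K^q+K^p\otimes W'\subseteq K^p\otimes K^q$. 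By the assumed linear independence one has $\dim U=r$ and $\dim W'=s$.

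The key geometric step is to describe $U\otimes K^q+K^p\otimes W'$ concretely inside $\operatorname{Hom}(K^q,K^p)$. Using the standard identification $v\otimes w\leftrightarrow vw^\top$ (so that $v\otimes w$ acts on $K^q$ by $x\mapsto(w^\top x)v$), the summand $U\otimes K^q$ consists of matrices whose image is contained in $U$, while $K^p\otimes W'$ consists of matrices vanishing on the orthogonal complement $(W')^\perp$. From this one verifies that $M\in U\otimes K^q+K^p\otimes W'$ if and only if $M((W')^\perp)\subseteq U$: the forward implication is immediate, and for the reverse one picks a splitting $K^q=(W')^\perp\oplus Y$, defines $M_1$ to agree with $M$ on $(W')^\perp$ and to vanish on $Y$ (so that $M_1\in U\otimes K^q$), and sets $M_2:=M-M_1$, which vanishes on $(W')^\perp$ and therefore lies in $K^p\otimes W'$.

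It then remains to optimize. Setting $Z:=(W')^\perp\subseteq K^q$, the lemma's minimum becomes the minimum of $\dim U+\dim W'=\dim U+q-\dim Z$ subject to $\sum_{i=1}^m A_i(Z)\subseteq U$, and for fixed $Z$ the optimal choice is $U=\sum_{i=1}^m A_i(Z)$. Running over $Z$ yields
\[
\min_{Z\subseteq K^q}\left(q-\dim Z+\dim\sum_{i=1}^m A_i(Z)\right),
\]
which is $\ncrk(A)$ by the equivalent characterization stated before the lemma. The main technical subtlety is the clean handling of the tensor-to-matrix dictionary in the middle step; once that identification is in place (via the standard inner product, or basis-free via the exact sequence $K^p\otimes K^q\twoheadrightarrow(K^p/U)\otimes(K^q/W')$), the rest of the argument is a routine bookkeeping.
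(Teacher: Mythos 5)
Your proposal is correct and is essentially the paper's own argument: both proofs translate the tensor containment into the statement that each $A_i$ carries the subspace $Z=(W')^{\perp}$ into $U=\operatorname{span}(v_1,\dots,v_r)$, then optimize $\dim U + (q-\dim Z)$ and recognize the Fortin--Reutenauer formula. You merely spell out, slightly more explicitly than the paper, the identification of $U\otimes K^q + K^p\otimes W'$ with the matrices $M$ satisfying $M\bigl((W')^{\perp}\bigr)\subseteq U$. (Note in passing that the paper's displayed definition of $\ncrk$ says ``maximal'' where it should read ``minimal''; your use of the minimum over subspaces is what the cited Fortin--Reutenauer characterization and the paper's own proof actually require.)
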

\begin{proof}
If (\ref{eq:TA}) holds, then take $W$ to be the $(q-s)$-dimensional space
perpendicular to the vectors $w_1,w_2,\dots,w_s$. The space $A_i(W)$
is contained in the span of $v_1,v_2\dots,v_r$.
So the non-commutative rank is at most $q+r-(q-s)=r+s$.

We show that $r+s$ can be equal to $\ncrk(A).$
Suppose that $k=\ncrk(A)$.
For some $s$ there exists an 
 subspace $V\subseteq K^p$ with $k=q+\dim V-\dim W$,
 where $V=\sum_{i=1}^m A_i(W)$.
 Choose a basis $w_1,w_2,\dots,w_s$ of the space orthogonal
 to $W$. Then we have $s=q-\dim W$.
 Also choose a basis $v_1,v_2,\dots,v_r$ of $V$.
 Now (\ref{eq:TA}) holds and  $r+s=q-\dim W+\dim V=k$.
\end{proof}
The following proposition shows that the non-commutative rank can be seen as a special case of the $G$-stable rank.
\begin{proposition}
For $\alpha=(1,1,\ell)$ and $\ell\geq\min\{p,q\}$ we have $\ncrk(A)=\rk^G_\alpha(T_A)$.
\end{proposition}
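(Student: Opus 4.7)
The plan is to prove the two inequalities separately.

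For the upper bound $\rk^G_\alpha(T_A)\le \ncrk(A)$, I would use the previous lemma to write $T_A=\sum_{i=1}^r v_i\otimes a_i+\sum_{j=1}^s b_j\otimes w_j$ with $v_i\in V_1$, $a_i\in V_2\otimes V_3$, $b_j\in V_1\otimes V_3$, $w_j\in V_2$, and $r+s=\ncrk(A)$. For a direction-$1$ slice $v\otimes a$ (with $v\in V_1$, $a\in V_2\otimes V_3$), choosing the $1$-parameter subgroup $\lambda=(\lambda_1,I,I)$ where $\lambda_1$ acts by $t$ on $Kv$ and by the identity on a complement of $Kv$ gives $\vv(\lambda\cdot(v\otimes a))=1$ and $\vv(\det\lambda_1)=1$, hence $\mu_\alpha(\lambda,v\otimes a)=\alpha_1=1$. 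The symmetric argument handles $b\otimes w$ with $\alpha_2=1$. The triangle inequality (Proposition~\ref{prop:triangle}) then yields $\rk^G_\alpha(T_A)\le r+s=\ncrk(A)$.

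For the lower bound I would invoke Theorem~\ref{theo:1PSG} and show $\mu_\alpha(\lambda,T_A)\ge \ncrk(A)$ for every $1$-parameter subgroup $\lambda\in G(K[t])$ with $\vv(\lambda\cdot T_A)>0$. Diagonalize $\lambda_i$ with nonnegative integer weights $x_1,\dots,x_p$, $y_1,\dots,y_q$, $z_1,\dots,z_m$; set $X=\sum x_i$, $Y=\sum y_j$, $Z=\sum z_k$, $S=\vv(\lambda\cdot T_A)=\min\{x_i+y_j+z_k:T_{ijk}\ne 0\}$, and the auxiliary ``$2$D'' valuation $\tilde S=\min\{x_i+y_j:\exists k,\ T_{ijk}\ne 0\}$. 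Recall also that $\ncrk(A)\le\min(p,q)\le\ell$. The goal reduces to $X+Y+\ell Z\ge \ncrk(A)\cdot S$.

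The heart of the argument is a $2$-dimensional sub-lemma: if $\tilde S>0$, then $X+Y\ge \ncrk(A)\cdot\tilde S$. I would prove it by a layer-cake integration. For each $c\in[0,\tilde S]$, the sets $I_c=\{i:x_i\ge c\}$ and $J_{\tilde S-c}=\{j:y_j\ge \tilde S-c\}$ together cover the $(i,j)$-support of $T_A$, since a support point forces $x_i+y_j\ge \tilde S$. Hence each $A_k$ splits as $B_k+C_k$ with $B_k$ supported in the $I_c$-rows and $C_k$ in the $J_{\tilde S-c}$-columns, placing $T_A$ in the subspace of the previous lemma and forcing $\ncrk(A)\le |I_c|+|J_{\tilde S-c}|$. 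Integrating from $0$ to $\tilde S$, using $\int_0^\infty |I_c|\,dc=X$ and the change of variable $c\mapsto \tilde S-c$ in the second term, gives $\ncrk(A)\cdot\tilde S\le X+Y$. The main obstacle, I expect, is precisely this folding step: pairing $I_c$ with $J_{\tilde S-c}$ rather than with $J_c$ is what recovers the optimal constant (pairing with $J_c$ loses a factor of two).

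To pass from the $2$D inequality to the $3$D statement I would argue by cases. If $\tilde S=0$, pick $(i_0,j_0,k_0)$ with $x_{i_0}=y_{j_0}=0$ and $T_{i_0 j_0 k_0}\ne 0$; then $S\le z_{k_0}\le Z$, so $\ell Z\ge \ell S\ge \ncrk(A)\cdot S$. Otherwise, choose a witness $(i_0,j_0)$ for $\tilde S$ and a $k_0$ with $T_{i_0 j_0 k_0}\ne 0$; this forces $S\le \tilde S+z_{k_0}\le \tilde S+Z$, so $S-\tilde S\le Z$, and the sub-lemma combined with $\ell\ge \ncrk(A)$ yields
$$
X+Y+\ell Z\ge \ncrk(A)\tilde S+\ell(S-\tilde S)\ge \ncrk(A)\cdot S,
$$
completing the lower bound.
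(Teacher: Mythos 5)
Your proof is correct. The upper bound is essentially the paper's argument repackaged: the paper builds a single one-parameter subgroup scaling the chosen $v_i$'s and $w_j$'s by $t$ to get slope exactly $r+s$, whereas you bound each slice-rank-one piece by $\alpha_1$ or $\alpha_2$ and invoke the triangle inequality; these amount to the same thing. The lower bound, however, is organized genuinely differently. The paper first normalizes to $\lambda_3(t)=I$ by replacing $\lambda_1(t)$ with $t^e\lambda_1(t)$ (this is where $\ell\ge\min\{p,q\}$ is spent), and then runs a discrete averaging over the sorted ``diagonal'' sums $\sum_{i=1}^{h+1}\bigl(x(i)+y(h+2-i)\bigr)$ to extract a threshold $(r,s)$ with $r+s=h$ and $x(r+1)+y(s+1)<d$. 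You instead isolate a clean two-mode inequality $X+Y\ge\ncrk(A)\,\tilde S$ via a continuous layer-cake integral over all thresholds $c\in[0,\tilde S]$, and absorb the third mode afterward through $\ell Z\ge\ell(S-\tilde S)\ge\ncrk(A)(S-\tilde S)$ (which is where $\ell\ge\min\{p,q\}$ enters for you). Your route buys a couple of things: it applies to an arbitrary one-parameter subgroup rather than implicitly assuming the infimum is attained by one with integer slope $h$, and the integral averaging avoids the index bookkeeping (and the requirement that $h$ be an integer with $h+1\le\min\{p,q\}$) that the paper's sorted-diagonal argument quietly relies on. The folding $I_c$ against $J_{\tilde S-c}$ is exactly right and is the continuous analogue of the paper's antidiagonal pairing.
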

\begin{proof}
Let $k=\ncrk(A)$. Then we have
$$
T_A\in \sum_{i=1}^r v_i\otimes K^q\otimes K^m+\sum_{j=1}^s K^p\otimes w_j\otimes K^m.
$$
for some $r$ and $s$ with $r+s=k$ and vectors $v_1,\dots,v_r,w_1,\dots,w_s$.
We extend $v_1,\dots,v_r$ to a basis $v_1,\dots,v_p$ and extend $w_1,\dots,w_s$ to a basis $w_1,\dots,w_q$.
We define a $1$-parameter subgroup $\lambda(t)=(\lambda_1(t),\lambda_2(t),\lambda_3(t))$ in $G=\GL_p\times \GL_q\times \GL_m$ by $\lambda_1(t)\cdot v_i=t v_i$ for $i=1,2,\dots,r$, $\lambda_1(t)\cdot v_i=v_i$ for $i=r+1,r+2,\dots,p$,
$\lambda_2(t)\cdot w_j=tw_j$ for $j=1,2,\dots,s$, $\lambda_2(t)\cdot w_j=w_j$ for $j=s+1,s+2,\dots,q$ and $\lambda_3(t)$ is just the identity. Then we have $\vv(\lambda(t)\cdot T_A)=1$, $\det(\lambda_1(t))=t^r$, $\det(\lambda_2(t))=t^s$, $\det(\lambda_3(t))=1$ and 
$$\rk_{\alpha}^G(T_A)\leq \mu_{\alpha}(\lambda(t),T_A)=\frac{1\cdot r+1\cdot s+\ell\cdot 0}{1}=k=\ncrk(A).$$

On the other hand, let $h=\rk_\alpha^G(T_A)$ and 
suppose that $\lambda(t)\in G$ is a $1$-parameter subgroup with 
$\mu_{\alpha}(\lambda(t),T_A)=h$. If $h=\min\{p,q\}$ then clearly $\ncrk(A)\leq h$,
so we assume that $h<\min\{p,q\}$.
Suppose $\ell\geq p$ (the case $\ell\geq q$ will go similarly). 
If $\det(\lambda_3(t))=t^e$ then we can define another $1$-parameter subgroup $\rho(t)=(\rho_1(t),\rho_2(t),\rho_3(t))$
by $\rho_1(t)=t^e \lambda_1(t)$, $\rho_2(t)=\lambda_2(t)$ and $\rho_3(t)=I$. Then $\vv(\rho(t)\cdot T_A)\geq \vv(\lambda(t)\cdot T_A)$, and we get
\begin{multline*}
\mu_\alpha(\rho(t),T_A)=\frac{\vv(\det \rho_1(t))+\vv(\det \rho_2(t))+\ell \vv(\det \rho_3(t))}{\vv(\rho(t)\cdot T_A)}\leq\\ \leq \frac{pe+\vv(\det \lambda_1(t))+\vv \det(\lambda_2(t))}{\vv(\lambda(t)\cdot T_A)}\leq \\ \leq 
\frac{\vv(\det \lambda_1(t))+\vv(\det \lambda_2(t))+\ell \vv(\det \lambda_3(t))}{\vv(\lambda(t)\cdot T_A)}=\mu_{\alpha}(\lambda(t),T_A)
\end{multline*}
because $\ell\geq p$ and $\vv(\det \lambda_3(t))=e$.
We can replace $\lambda(t)$ by $\rho(t)$ and without loss of generality we may assume that $\lambda_3(t)=I$.

Let $d:=\vv(\lambda(t)\cdot T_A)$. After base changes, we have
$$
\lambda(t)=\begin{pmatrix}
t^{x(1)} & & \\
& \ddots & \\
& & t^{x(p)}
\end{pmatrix}\mbox{ and }
\rho(t)=\begin{pmatrix}
t^{y(1)} & & \\
& \ddots & \\
& & t^{y(q)}
\end{pmatrix}
$$
From 
$$
\frac{\sum_{i=1}^{h+1} (x(i)+y(h+2-i))}{d}\leq \frac{\sum_{i=1}^p x(i)+\sum_{j=1}^q y(j)}{d}=\mu_{\alpha}(\lambda(t),T_A)=h$$ 
follows that $x(r+1)+y(s+1)\leq \frac{hd}{k+1}<hd$ for some $r,s$ with $r+s=h$.
If a basis vector $[i,j,k]=[i]\otimes [j]\otimes [k]$ appears in $T_A$ then $x(i)+y(j)\geq dk$
and therefore $i\leq r$ or $j\leq s$.
 This means that 
$$
T_A\in \sum_{i=1}^r [i]\otimes K^q\otimes K^m+\sum_{j=1}^s K^p\otimes [j]\otimes K^m
$$
and $\ncrk(T_A)\leq r+s=h=\rk_\alpha^G(T_A)$.

\end{proof}

\subsection{Semi-continuity of the $G$-stable rank}
We will show that the $G$-stable rank is semi-continuous, which means that for every $r$, the set  of all tensors with $G$-stable rank $\leq r$
is Zariski closed.

Let us for the moment fix a $1$-parameter subgroup $\lambda(t)$ of $G$.
We can choose bases in the vector spaces $V_i$ for $i=1,2,\dots,d$
such that the matrix of $\lambda_i(t)$ is diagonal, with diagonal entries $t^{x(i,1)},t^{x(i,2)},\dots,t^{x(i,n_i)}$
where $x(i,1)\geq x(i,2)\geq \cdots\geq x(i,n_i)\geq 0$.
Define
$$
Z=\{v\in V\mid  \mu_{\alpha}(\lambda(t),v)<r\}.
$$
The space $Z$ is spanned by all basis vectors
$[i_1,i_2,\dots,i_d]\in V$ with
$$ \sum_{i=1}^d \alpha_i\sum_{j=1}^{n_i}x(i,j)<r(x(1,i_1)+x(2,i_2)+\cdots+x(d,i_d)).$$
Let $B=B_{n_1}\times B_{n_2}\times \cdots\times B_{n_d}\subseteq G$ where $B_{k}\subseteq \GL_k$ is the Borel group of upper triangular invertible matrices.
If $[i_1,i_2,\dots,i_d]$ lies in $Z$, and $j_k\leq i_k$ for all $k$, then $[j_1,j_2,\dots,j_d]$ lies in $Z$.
This implies that $Z$ is stable under the action of $B$.
\begin{lemma}
The set $G\cdot Z=\bigcup_{g\in G} g\cdot Z$ is Zariski closed.
\end{lemma}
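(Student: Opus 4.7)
The plan is to apply the standard ``Borel completeness trick'': since $Z$ is already $B$-stable and the flag variety $G/B$ is complete, it should follow that $G\cdot Z$ is Zariski closed in $V$. The key input is that $G/B = \prod_i \GL(V_i)/B_{n_i}$ is a product of projective flag varieties, hence a complete variety.

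First, I would form the incidence subvariety
\[
X = \{(gB,v) \in (G/B)\times V : g^{-1}v \in Z\}.
\]
The condition $g^{-1}v\in Z$ depends only on the coset $gB$: for $b\in B$,
\[
(gb)^{-1}v = b^{-1}g^{-1}v \in b^{-1}\cdot Z = Z \iff g^{-1}v\in Z,
\]
using the $B$-stability of $Z$ established just before the lemma.

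Second, I would verify that $X$ is closed in $(G/B)\times V$. The isomorphism
\[
\sigma:G\times V\to G\times V,\qquad (g,v)\mapsto (g,g^{-1}v)
\]
pulls $G\times Z$ back to the closed subset $\widetilde X = \{(g,v)\in G\times V : g^{-1}v\in Z\}$. The set $\widetilde X$ is invariant under the right $B$-action $(g,v)\cdot b=(gb,v)$, and the quotient map $G\times V\to (G/B)\times V$ is a principal $B$-bundle, hence faithfully flat and open. Therefore $\widetilde X$ descends to the closed subset $X\subseteq (G/B)\times V$.

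Third, I would project $X$ to $V$ along the second factor. Because $G/B$ is complete, the projection $(G/B)\times V\to V$ is a closed morphism. A point $v\in V$ lies in the image of $X$ exactly when there exists $g\in G$ with $g^{-1}v\in Z$, i.e.\ when $v\in G\cdot Z$. Thus the image of $X$ under the projection is precisely $G\cdot Z$, and as the image of a closed set under a closed map it is Zariski closed in $V$.

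The main technical point is the closedness of $X$ in $(G/B)\times V$; once the isomorphism $\sigma$ reduces this to the closedness of $Z$ in $V$ combined with descent along the principal $B$-bundle $G\times V\to (G/B)\times V$, the completeness of $G/B$ does the rest. Everything is valid over the perfect base field $K$ since $G/B$ is a projective $K$-scheme and the associated descent works over any base.
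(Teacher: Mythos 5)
Your argument is essentially identical to the paper's: form the incidence subvariety $S=\{(gB,v):g^{-1}v\in Z\}\subseteq G/B\times V$ and project to $V$, using completeness of $G/B$ to conclude the image $G\cdot Z$ is closed. The paper asserts closedness of $S$ without comment, whereas you justify it via descent along the $B$-bundle $G\times V\to (G/B)\times V$; this is a welcome addition of detail but not a different approach.
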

\begin{proof}
Consider the Zariski closed subset $S\subseteq G/B\times V$ defined by
$$
S=\{(gB,v)\mid g^{-1}\cdot v\in Z\}
$$
and let $\pi:G/B\times V\to V$ be the projection onto $V$. 
The flag variety $G/B$ is projective, so $\pi$ is a projective morphism which maps closed
sets to closed sets. In particular, $G\cdot Z=\pi(S)$ is Zariski closed.
\end{proof}
\begin{theorem}\label{theo:ZariskiClosed}
For any weight $\alpha\in \R_{>0}^d$ and $r\in \R$ the sets $X^\circ(\rk^G_{\alpha},r)=\{v\in V\mid \rk^G_{\alpha}(v)<r\}$ and
$X(\rk^G_\alpha,r)=\{v\in V\mid \rk^G_{\alpha}(v)\leq r\}$ are finite unions of sets of the form $G\cdot Z$
where $Z$ is a Borel-fixed subspace. In particular, these sets are Zariski closed.
\end{theorem}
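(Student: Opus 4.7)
The plan is to combine Theorem~\ref{theo:1PSG} with the Lemma just proved. By Theorem~\ref{theo:1PSG}, we have $\rk^G_\alpha(v)<r$ if and only if there is a $1$-parameter subgroup $\lambda(t)\in G(K[t])$ with $\vv(\lambda(t)\cdot v)>0$ and $\mu_\alpha(\lambda(t),v)<r$. Any such $\lambda$ is $G$-conjugate to one that is diagonal in the chosen bases with weights $x(i,1)\geq\dots\geq x(i,n_i)\geq 0$ (conjugacy of maximal tori and the Weyl group action, together with $\lambda\in G(K[t])$ for non-negativity). For such a normalized diagonal $\lambda$ let $Z_\lambda$ be the Borel-fixed subspace constructed in the discussion preceding the Lemma. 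A quick check---using that $\vv(\lambda\cdot v)$ equals the minimum of $\sum_k x(k,i_k)$ over $\supp v$, so that a single bad basis vector in $\supp v$ forces $\mu_\alpha(\lambda,v)\geq r$---shows that $\{v:\mu_\alpha(\lambda,v)<r\}$ coincides with $Z_\lambda$, so
\[
X^\circ(\rk^G_\alpha,r)=\bigcup_\lambda G\cdot Z_\lambda,
\]
where $\lambda$ ranges over normalized diagonal 1-PSs.

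The crucial finiteness step is that $Z_\lambda$ depends on $\lambda$ only through the downward-closed subset $T_\lambda\subseteq\prod_k\{1,\dots,n_k\}$ of tuples $(i_1,\dots,i_d)$ satisfying $\sum_i\alpha_i\sum_j x(i,j)<r\sum_k x(k,i_k)$. Since $\prod_k\{1,\dots,n_k\}$ is finite, only finitely many subspaces $Z_\lambda$ appear. Combined with the Lemma, this gives $X^\circ(\rk^G_\alpha,r)$ as a finite union of sets $G\cdot Z$ with $Z$ a Borel-fixed subspace of $V$, and in particular Zariski closed.

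For $X(\rk^G_\alpha,r)$, I would write $X(\rk^G_\alpha,r)=\bigcap_{r'>r}X^\circ(\rk^G_\alpha,r')$. Since the Borel-fixed subspaces of $V$ correspond to downward-closed subsets of the finite set $\prod_k\{1,\dots,n_k\}$, there are only finitely many of them, and hence only finitely many possible finite unions $\bigcup_i G\cdot Z_i$. The monotone family $\{X^\circ(\rk^G_\alpha,r')\}_{r'>r}$ therefore takes only finitely many distinct values and must stabilize: there exists $r^*>r$ with $X^\circ(\rk^G_\alpha,r')=X^\circ(\rk^G_\alpha,r^*)$ for all $r<r'\leq r^*$. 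Consequently $X(\rk^G_\alpha,r)=X^\circ(\rk^G_\alpha,r^*)$, which is again a finite union of $G\cdot Z$'s and hence Zariski closed.

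The heavy lifting is done by Theorem~\ref{theo:1PSG} and the closedness Lemma; the main obstacle in carrying out the plan is really just the bookkeeping required to (i) reduce from arbitrary 1-PSs to a single normalized diagonal form while keeping track of the identification $\{v:\mu_\alpha(\lambda,v)<r\}=Z_\lambda$, and (ii) justify that a monotone family indexed by a dense linear order, taking values in a finite collection, must stabilize so that the intersection equals one of its members.
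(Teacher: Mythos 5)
Your proposal is correct and follows essentially the same route as the paper: reduce to diagonal $1$-parameter subgroups via Theorem~\ref{theo:1PSG} and conjugation into a maximal torus, observe that $\{v:\mu_\alpha(\lambda,v)<r\}$ is the Borel-fixed subspace $Z$ from the preceding discussion, invoke finiteness of Borel-stable subspaces together with the Lemma that $G\cdot Z$ is closed, and then obtain $X(\rk^G_\alpha,r)$ as a stabilized member of the monotone family $\{X^\circ(\rk^G_\alpha,r')\}_{r'>r}$. The only cosmetic differences are that you spell out the conjugacy-normalization step and the stabilization of a monotone family over a dense index set more explicitly than the paper does.
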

\begin{proof}
If $\rk^G_\alpha(v)<r$, then there exists a $1$-parameter subgroup $\lambda(t)$ of $G$ such that  $\mu_{\alpha}(\lambda(t),v)<r$.
If $Z=\{w\in V\mid \mu_{\alpha}(\lambda(t),w)<r\}$ then  $X^\circ(\rk^G_\alpha,r)$ contains $Z$ and $G\cdot Z$.
Since there are only finite many Borel stable subspaces of $V$, we see that $X^\circ(\rk^G_\alpha,r)$ must
be a finite union $G\cdot Z_1\cup G\cdot Z_2\cup \cdots\cup G\cdot Z_s$
where $Z_1,Z_2,\dots,Z_s$ are Borel stable subspaces. Since each $G\cdot Z_i$ is closed, $X^\circ(\rk^G_\alpha,r)$
is closed. Because  there are only finitely many Borel stable subspaces, there are only finitely many possibilities
for $X^\circ(\rk^G_\alpha,s)$ where $s\in \R_{>0}$.
There exists an $\varepsilon>0$ such that $X^\circ(\rk^G_\alpha,s)$ is the same for all $s\in (r,r+\varepsilon]$.
We have $X(\rk^G_\alpha,r)=\bigcap_{r<s\leq r+\varepsilon} X^\circ(\rk^G_\alpha,s)=X^\circ(\rk^G_\alpha,r+\varepsilon)$.
\end{proof}

\section{Results on the $G$-stable rank}
\subsection{Easy observations and a technical lemma}

\begin{lemma}\label{lem:Granklb}
If $v\neq 0$, then we have $\rk^G_\alpha(v)\geq \min\{\alpha_1,\alpha_2,\dots,\alpha_d\}>0$. In particular, $\rk^G(v)\geq 1$.
\end{lemma}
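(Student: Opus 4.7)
The plan is to invoke Theorem~\ref{theo:1PSG}, which reduces the task to showing $\mu_\alpha(\lambda(t),v)\geq\min_i\alpha_i$ for every $1$-parameter subgroup $\lambda(t)=(\lambda_1(t),\ldots,\lambda_d(t))\in G(K[t])$ with $\vv(\lambda(t)\cdot v)>0$. For such a $\lambda(t)$, I would first choose bases of $V_1,\ldots,V_d$ so that each $\lambda_i(t)$ becomes diagonal with diagonal entries $t^{x(i,1)},\ldots,t^{x(i,n_i)}$ and every $x(i,j)\geq 0$, as recalled after the definition of the $G$-stable rank.

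Next, I would expand $v=\sum c_{i_1,\ldots,i_d}[i_1,\ldots,i_d]$ in the resulting tensor basis. Since $\lambda(t)$ scales $[i_1,\ldots,i_d]$ by $t^{x(1,i_1)+\cdots+x(d,i_d)}$, we obtain
$$s:=\vv(\lambda(t)\cdot v)=\min\{x(1,i_1)+\cdots+x(d,i_d)\mid c_{i_1,\ldots,i_d}\neq 0\},$$
and this minimum is over a nonempty set because $v\neq 0$. I would fix a tuple $(i_1^*,\ldots,i_d^*)$ realizing it; by hypothesis $s>0$.

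For the numerator of $\mu_\alpha$, nonnegativity of the exponents gives $\vv(\det\lambda_i(t))=\sum_j x(i,j)\geq x(i,i_i^*)$, and hence
$$\sum_{i=1}^d\alpha_i\vv(\det\lambda_i(t))\geq\sum_{i=1}^d\alpha_i\,x(i,i_i^*)\geq(\min_i\alpha_i)\sum_{i=1}^d x(i,i_i^*)=(\min_i\alpha_i)\,s.$$
Dividing through by $s$ then yields $\mu_\alpha(\lambda(t),v)\geq\min_i\alpha_i$, which is exactly the claim.

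The main conceptual obstacle would have been controlling $\mu_\alpha(g(t),v)$ for an arbitrary $g(t)\in G(\PS)$, since without a diagonal monomial form the valuation $\vv(g(t)\cdot v)$ does not split cleanly into contributions from each tensor factor. All the substance of that difficulty is absorbed by the Hilbert--Mumford/Kempf reduction packaged in Theorem~\ref{theo:1PSG}, after which the remaining argument is an elementary weight calculation exploiting nonnegativity of the exponents.
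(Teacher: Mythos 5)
Your proof is correct, but it takes a different route from the paper. The paper does not invoke Theorem~\ref{theo:1PSG} at all: it works directly with an arbitrary $g(t)\in G(\PS)$, observing that if $\vv(g(t)\cdot v)=s>0$ then $\sum_{i=1}^d\vv(\det g_i(t))\geq s$ (this follows from $v=g(t)^{-1}(g(t)\cdot v)$ and the fact that $g(t)^{-1}$ has entries in $t^{-\sum_i\vv(\det g_i(t))}K[[t]]$ via the adjugate formula, though the paper states it without justification), and then bounds $\mu_\alpha(g(t),v)=\sum_i\alpha_i\vv(\det g_i(t))/s\geq\min_i\alpha_i\cdot\big(\sum_i\vv(\det g_i(t))\big)/s\geq\min_i\alpha_i$. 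Your approach trades this single norm-type inequality for the Hilbert--Mumford reduction to $1$-parameter subgroups and then an explicit weight computation; the weight computation is a touch more concrete but the reduction is much heavier machinery (Kempf's theorem) than the lemma actually needs. Both are valid since Theorem~\ref{theo:1PSG} precedes this lemma in the paper's logical order, but the paper's direct argument is more self-contained. One small caveat applying to you and the paper alike: the paper phrases its proof as choosing $g(t)$ attaining the infimum, which need not exist; the clean phrasing is to bound $\mu_\alpha$ from below uniformly and take the infimum, which is what you actually do.
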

\begin{proof}
Choose $g(t)\in G(\PS)$ with $\mu_\alpha(g(t),v)=\rk^G_\alpha(v)$.
From $v\neq 0$ follows that $g(t)\cdot v\neq 0$, say $\vv(g(t)\cdot v)=s>0$. Then we get $\sum_{i=1}^d \vv( g_i(t))\geq s$
and 
$$
\frac{\sum_{i=1}^d \alpha_i\vv(g_i(t)) }{\vv(g(t)\cdot v)}\geq \min\{\alpha_1,\dots,\alpha_d\}\frac{\sum_{i=1}^s \vv(g_i(t))}{s}\geq \min\{\alpha_1,\dots,\alpha_d\}.
$$
It follows that
$\rk^G_{\alpha}(v)\geq \min\{\alpha_1,\dots,\alpha_d\}>0$.
\end{proof}
Suppose that $v=u\otimes w$ is nonzero with $u\in V_1$ and $w\in V_2\otimes \cdots \otimes V_d$.
We choose bases in $V_1,\dots,V_d$ such that $u$ is the first basis vector in $V_1$.
We can choose a one parameter subgroup $\lambda(t)$ with 
$$
\lambda_1(t)=\begin{pmatrix}
t & & & \\
& 1 & & \\
& & \ddots & \\
& & & 1
\end{pmatrix}
$$
and $\lambda_k(t)=1_{n_k}$ for $k=2,3,\dots,d$. Then we have $\lambda(t)\cdot v=t v$ and $\mu_{\alpha}(A(t),v)=\alpha_1$.
This shows that $\rk^G_\alpha(v)\leq \alpha_1$. From Lemma~\ref{lem:Granklb} follows that 
$\rk^G_\alpha(v)\leq \alpha_1$. If $v$ has slice rank $1$ concentrated in the $i$-th slice, then $\rk^G_\alpha(v)\leq \alpha_i\leq \max\{\alpha_1,\alpha_2,\dots,\alpha_d\}$.
\begin{corollary}
If $v$ has slice rank $1$, then $\rk^G(v)=1$.
\end{corollary}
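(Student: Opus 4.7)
The plan is to combine the two bounds that have essentially been established in the paragraphs immediately preceding the statement: a lower bound from Lemma~\ref{lem:Granklb}, and an upper bound from the explicit $1$-parameter subgroup construction.

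For the lower bound, I will invoke Lemma~\ref{lem:Granklb} directly with $\alpha=(1,1,\dots,1)$: since a slice-rank-$1$ tensor is nonzero by definition, we get $\rk^G(v)\ge \min\{1,1,\dots,1\}=1$.

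For the upper bound, I will use the construction from the paragraph just above the statement. If $v$ has slice rank $1$ concentrated in the $i$-th tensor factor, then $v=w\otimes u$ where $w\in V_i$ and $u$ lives in the remaining tensor factors. After choosing a basis of $V_i$ whose first vector is (a scalar multiple of) $w$, and arbitrary bases in the other $V_j$, I define the $1$-parameter subgroup $\lambda(t)$ whose $i$-th component is $\operatorname{diag}(t,1,\dots,1)$ and whose other components are identity matrices. Then $\lambda(t)\cdot v=tv$, so $\vv(\lambda(t)\cdot v)=1$, while $\vv(\det \lambda_i(t))=1$ and $\vv(\det \lambda_j(t))=0$ for $j\ne i$. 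Therefore $\mu_\alpha(\lambda(t),v)=\alpha_i$, and specializing to $\alpha=(1,1,\dots,1)$ gives $\mu(\lambda(t),v)=1$. Hence $\rk^G(v)\le 1$.

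Combining the two inequalities yields $\rk^G(v)=1$, as required. There is no real obstacle here; the whole argument is essentially a one-line consequence of the discussion preceding the statement, so the proof is just a matter of explicitly specializing $\alpha=(1,1,\dots,1)$ in the upper bound $\rk^G_\alpha(v)\le \alpha_i$ established there, and pairing it with the matching lower bound from Lemma~\ref{lem:Granklb}.
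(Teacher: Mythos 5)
Your proof is correct and follows exactly the same route as the paper: the upper bound $\rk^G(v)\le 1$ comes from the explicit $1$-parameter subgroup constructed in the discussion immediately preceding the corollary, and the lower bound $\rk^G(v)\ge 1$ comes from Lemma~\ref{lem:Granklb}. There is nothing to add.
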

\begin{proof}
If $v$ has slice rank $1$, then $\rk^G(v)=\rk^G_{(1,\dots,1)}(v)\leq\max\{1,\dots,1\}=1$ and $\rk^G(v)\geq 1$ by Lemma~\ref{lem:Granklb}.
\end{proof}
\begin{corollary}
If $v$ has rank $1$ then $\rk^G_\alpha(v)=\min\{\alpha_1,\dots,\alpha_d\}$.
\end{corollary}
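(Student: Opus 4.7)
The plan is to establish the equality by two separate inequalities, both of which are essentially already in hand from the surrounding discussion. The lower bound $\rk^G_\alpha(v) \geq \min\{\alpha_1,\dots,\alpha_d\}$ is immediate from Lemma~\ref{lem:Granklb}, which applies to any nonzero tensor and in particular to a nonzero rank $1$ tensor. So the entire content of the corollary is the matching upper bound.

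For the upper bound, I would observe that a rank $1$ tensor $v = v_1 \otimes v_2 \otimes \cdots \otimes v_d$ has slice rank $1$ \emph{in every slice}: for each index $i$ we may write
\[
v = v_i \otimes (v_1 \otimes \cdots \otimes \widehat{v_i} \otimes \cdots \otimes v_d) \in V_1 \otimes \cdots \otimes V_{i-1} \otimes v_i \otimes V_{i+1} \otimes \cdots \otimes V_d.
\]
The paragraph immediately preceding the previous corollary (the one constructing the diagonal $1$-parameter subgroup $\lambda(t)$ that scales only the designated basis vector in $V_1$ by $t$) applies to each slot $i$ in turn, giving $\rk^G_\alpha(v) \leq \alpha_i$ for every $i$. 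Taking the minimum over $i$ yields $\rk^G_\alpha(v) \leq \min\{\alpha_1,\dots,\alpha_d\}$.

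There is no real obstacle here; the only thing to be careful about is simply to point out explicitly that a rank $1$ tensor is simultaneously slice-rank-$1$ concentrated in the $i$-th slice for \emph{all} $i$, so the upper bound $\alpha_i$ obtained by that construction holds for every $i$ (not just one). Combining the two inequalities gives the equality.
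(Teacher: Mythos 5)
Your proof is correct and matches the paper's own argument: the lower bound comes from Lemma~\ref{lem:Granklb}, and the upper bound $\rk^G_\alpha(v)\le\alpha_i$ for every $i$ comes from observing that a rank-$1$ tensor has slice rank $1$ concentrated in the $i$-th slice for all $i$, then invoking the $1$-parameter subgroup construction from the preceding paragraph and minimizing over $i$.
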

\begin{proof}
If $v$ has rank $1$ then $\rk^G_\alpha\leq \alpha_i$ for every $i$ and
$\rk^G_\alpha\geq \min\{\alpha_1,\dots,\alpha_d\}$ by Lemma~\ref{lem:Granklb}.
\end{proof}

\begin{proposition}\label{prop:horizontal}
Suppose that $v\in V_1\otimes V_2\otimes \cdots \otimes V_d$ and 
$w\in W_1\otimes W_2\otimes \cdots \otimes W_e$
and $v\otimes w\in V_1\otimes \cdots\otimes V_d\otimes W_1\otimes \cdots\otimes W_e$ is the horizontal tensor product.
We have $\rk_{\alpha,\beta}^G(v\otimes w)=\min\{\rk_{\alpha}^G(v),\rk_\beta^G(w)\}.$
\end{proposition}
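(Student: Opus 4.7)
The plan is to prove the two inequalities separately, invoking Theorem~\ref{theo:1PSG} throughout so that we may restrict to polynomial $1$-parameter subgroups $\lambda(t)\in G(K[t])$. The key supporting fact is the additivity of the $t$-valuation along a horizontal tensor product: if $a(t)\in\LS\otimes V$ and $b(t)\in\LS\otimes W$ are both nonzero, then $\vv(a(t)\otimes b(t))=\vv(a(t))+\vv(b(t))$. Writing $a(t)=t^{s_a}\widetilde a(t)$ and $b(t)=t^{s_b}\widetilde b(t)$ with $\widetilde a(0)\neq 0$ and $\widetilde b(0)\neq 0$, the leading coefficient of the tensor product is $\widetilde a(0)\otimes\widetilde b(0)$, which is nonzero since the tensor over a field of two nonzero vectors is nonzero.

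For the upper bound $\rk^G_{\alpha,\beta}(v\otimes w)\leq\min\{\rk^G_\alpha(v),\rk^G_\beta(w)\}$, I pick a polynomial $1$-parameter subgroup $\lambda(t)$ of $\GL(V_1)\times\cdots\times\GL(V_d)$ with $\vv(\lambda(t)\cdot v)>0$ and slope arbitrarily close to $\rk^G_\alpha(v)$, and extend it to the full product group by acting as the identity on each $W_j$. Then $\lambda(t)\cdot(v\otimes w)=(\lambda(t)\cdot v)\otimes w$ and, since $w$ has valuation $0$, we get $\vv(\lambda(t)\cdot(v\otimes w))=\vv(\lambda(t)\cdot v)>0$. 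The extended $1$-PS contributes nothing to the numerator on the $W$-side, so the full slope equals $\mu_\alpha(\lambda(t),v)$. Hence $\rk^G_{\alpha,\beta}(v\otimes w)\leq\rk^G_\alpha(v)$, and the symmetric argument yields $\leq\rk^G_\beta(w)$.

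For the lower bound, I fix any polynomial $1$-PS $\lambda(t)=(\lambda_V(t),\lambda_W(t))$ of the product group with $\vv(\lambda(t)\cdot(v\otimes w))>0$, and set $s_V=\vv(\lambda_V(t)\cdot v)$, $s_W=\vv(\lambda_W(t)\cdot w)$, $a=\sum_i\alpha_i\vv(\det\lambda_{V,i}(t))$, $b=\sum_j\beta_j\vv(\det\lambda_{W,j}(t))$; all four are nonnegative since $\lambda(t)$ is polynomial. By the additivity stated above, the slope is $\mu_{\alpha,\beta}(\lambda(t),v\otimes w)=(a+b)/(s_V+s_W)$. If both $s_V,s_W>0$, then by definition $a/s_V\geq\rk^G_\alpha(v)$ and $b/s_W\geq\rk^G_\beta(w)$, so the weighted mean $(a+b)/(s_V+s_W)$ is bounded below by $\min\{\rk^G_\alpha(v),\rk^G_\beta(w)\}$. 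If, say, $s_W=0$, then $s_V>0$ and $(a+b)/s_V\geq a/s_V\geq\rk^G_\alpha(v)$, again at least the minimum. Taking the infimum over all such $\lambda(t)$ yields the desired inequality.

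The only delicate point is the valuation additivity on the horizontal tensor product, which reduces to the nonvanishing of a tensor of nonzero vectors; the boundary cases where $s_V$ or $s_W$ vanishes require explicit mention but are routine.
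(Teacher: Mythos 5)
Your proof is correct and follows essentially the same route as the paper's: for the upper bound you extend a good $1$-parameter subgroup trivially on the other factor, and for the lower bound you use $\vv(a(t)\otimes b(t))=\vv(a(t))+\vv(b(t))$ together with the mediant inequality $(a+b)/(s_V+s_W)\geq\min\{a/s_V,b/s_W\}$. The only real difference is that you explicitly treat the boundary case $s_W=0$ (or $s_V=0$), which the paper's final display tacitly assumes away; this is a small but genuine improvement in rigor rather than a different approach.
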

\begin{proof}
Let $G=\GL(V_1)\times \cdots\times \GL(V_d)$ and $H=\GL(W_1)\times \cdots \times \GL(W_e)$.
There exists $g(t)\in G(K[[t]])$ with $\mu_\alpha(g(t),v)=\rk_\alpha(v)$. 
For $(g(t),1)\in (G\times H)(K[[t]])$ we get
$\mu_{\alpha,\beta}((g(t),h(t)),v\otimes w)=\rk_{\alpha}(v)$.
This proves that $\rk_{\alpha,\beta}^G(v\otimes w)\leq \rk_{\alpha}(v)$. Similarly, we have $\rk_{\alpha,\beta}^G(v\otimes w)\leq \rk^G_\beta(w)$, so we get  $\rk_{\alpha,\beta}^G(v\otimes w)\leq \min\{\rk_\alpha^G(v),\rk_\beta^G(w)\}$.

Conversely, suppose that $(g(t),h(t))\in G\times H(K[[t]])$
satisfies $\mu_{\alpha,\beta}((g(t),h(t)),v\otimes w)=\rk_{\alpha,\beta}^G(v\otimes w)$.
Using that
$$
\vv((g(t),h(t))\cdot (v\otimes w))=\vv((g(t)\cdot v)\otimes (h(t)\cdot w))=\vv(g(t)\cdot v)+\vv(h(t)\cdot w)
$$
we get
\begin{multline*}
\mu_{\alpha,\beta}(v\otimes w)=\frac{\sum_{i=1}^d \vv(\det g_i(t))+\sum_{j=1}^e \vv(\det h_j(t))}{\vv(g(t)\cdot v)+ \vv(h(t)\cdot w)}=\\
\geq \min\left\{
\frac{\sum_{i=1}^d \vv(\det g_i(t))}{ \vv(g(t)\cdot v)},
\frac{\sum_{j=1}^e \vv(\det h_j(t))}{\vv(h(t)\cdot w)}\right\}=
\min\{\rk_\alpha^G(v),\rk_\beta^G(w)\}.
\end{multline*}

\end{proof}

We will need the following technical lemma to prove Proposition~\ref{prop:triangle}.
\begin{lemma}\label{lem:gt}
If $g(t),h(t)\in \GL_n(K[[t]])$ then there exists $u(t),g'(t),h'(t)\in \GL_n(K[[t]])$ such that
$u(t)=g'(t)h(t)=h'(t)g(t)$ and $\vv(\det u(t))\leq \vv(\det g(t))+\vv(\det h(t))$.
\end{lemma}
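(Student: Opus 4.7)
The plan is to reformulate the claim in terms of $\PS$-lattices inside $\LS^n$ and then apply the elementary divisor theorem for the DVR $\PS$. Set $L=\PS^n$ and introduce the lattices $L'=h(t)^{-1}(L)$ and $L''=g(t)^{-1}(L)$ in $\LS^n$. Both contain $L$ since $g(t)$ and $h(t)$ have entries in $\PS$, and since $h(t)$ restricts to a $\PS$-module isomorphism $L'\xrightarrow{\sim} L$ we obtain $\text{length}_{\PS}(L'/L)=\vv(\det h(t))$; similarly $\text{length}_{\PS}(L''/L)=\vv(\det g(t))$. Put $L_0 = L'+L''$, still a $\PS$-lattice containing $L$. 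The canonical surjection of $\PS$-modules $L'/L\oplus L''/L\twoheadrightarrow L_0/L$ then yields
\[
\text{length}(L_0/L)\;\leq\; \vv(\det g(t))+\vv(\det h(t)).
\]

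Next, I apply the elementary divisor theorem to the inclusion $L\subseteq L_0$ of free $\PS$-modules of rank $n$: choose a $\PS$-basis $f_1,\dots,f_n$ of $L_0$ and nonnegative integers $b_1,\dots,b_n$ with $\sum_i b_i=\text{length}(L_0/L)$ such that $t^{b_1}f_1,\dots,t^{b_n}f_n$ is a $\PS$-basis of $L$. Define $\phi\colon L_0\to L$ by $f_i\mapsto t^{b_i}f_i$, and set
\[
u(t)=\phi|_L,\qquad g'(t)=\phi\circ h(t)^{-1},\qquad h'(t)=\phi\circ g(t)^{-1}.
\]
Since $h(t)^{-1}(L)=L'\subseteq L_0$ and $\phi(L_0)\subseteq L$ (and likewise for $g$), both $g'(t)$ and $h'(t)$ are $\PS$-linear endomorphisms of $L$ and therefore have entries in $\PS$. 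All three are invertible over $\LS$: we have $\vv(\det u(t))=\sum_i b_i<\infty$, and then $\det g'(t)=\det u(t)/\det h(t)$ and $\det h'(t)=\det u(t)/\det g(t)$ are also nonzero. The identities $u(t)=g'(t)h(t)=h'(t)g(t)$ are immediate from the definitions. Combining with the length inequality above gives $\vv(\det u(t))\leq \vv(\det g(t))+\vv(\det h(t))$, which is the desired bound.

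The main obstacle I anticipate is not logical but conceptual: confirming that $g'(t)$ and $h'(t)$ really have entries in $\PS$ rather than just $\LS$. This is what forces the introduction of the enlarged lattice $L_0=L'+L''$ and the retraction $\phi$ — any naive candidate (such as a scalar multiple of $g(t)h(t)$) breaks the symmetry between $g$ and $h$, and only a ``universal'' $\phi$ killing all the torsion in $L_0/L$ handles both factorizations simultaneously. Once that construction is in place, everything else reduces to routine module-theoretic bookkeeping.
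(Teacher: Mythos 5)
Your argument is correct, but it follows a genuinely different route from the paper's. The paper works inside $L=\PS^n$: it takes the \emph{intersection} of the image lattices $g(t)L\cap h(t)L\subseteq L$, invokes only that a submodule of a free $\PS$-module is free to get a generator $u(t)$, and then bounds $\vv(\det u(t))=\dim_K L/(g(t)L\cap h(t)L)$ by the usual length bookkeeping. You instead pass to the \emph{sum} $L_0=g(t)^{-1}L+h(t)^{-1}L\supseteq L$ of the preimage lattices and need the elementary divisor theorem to manufacture a retraction $\phi:L_0\to L$ before you can even write down $u(t)$. Both proofs collapse to the same length computation over the DVR $\PS$, but they do build different $u(t)$'s in general: for $g(t)=\mathrm{diag}(1,t)$ and $h(t)=\begin{pmatrix}1&1\\0&t\end{pmatrix}$ the paper's intersection gives $\vv(\det u)=1$ while your $L_0=t^{-1}L$ forces $\vv(\det u)=2$. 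So the paper's construction is tighter (it is genuinely a least common right multiple), whereas yours trades optimality for symmetry. One thing your version gets exactly right where the paper's wording slips: from $u(t)L\subseteq g(t)L$ one directly concludes $g(t)^{-1}u(t)\in\GL_n(\PS)$, which yields $u(t)=g(t)h'(t)$ rather than the stated $u(t)=h'(t)g(t)$; the paper should really be intersecting row lattices (or transposing). Your $h'(t)=\phi\circ g(t)^{-1}$ visibly maps $L\to g(t)^{-1}L\subseteq L_0\to L$, so it lands in $\GL_n(\PS)$ and multiplies $g(t)$ on the correct side with no transposition needed.
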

\begin{proof}
We have
$$\vv(\det g(t))=\dim_K \frac{K[[t]]^n}{g(t)K[[t]]^n}.$$
The $\PS$-module $g(t)K[[t]]^n\cap h(t)K[[t]]^n$ is a submodule of the free module $K[[t]]^n$,
so it is also free of rank $\leq n$. So there exists a matrix $u(t)$ such that $g(t)K[[t]]^n\cap h(t)K[[t]]^n=u(t)K[[t]]^n$.
From $u(t)K[[t]]^n\subseteq g(t)K[[t]]^n$ follows that there exists a matrix $h'(t)$ such that $u(t)=h'(t)g(t)$.
Similarly, we find a matrix $g'(t)$ with $u(t)=g'(t)h(t)$.

We have 
\begin{multline*}
\vv(\det u(t))\leq \dim \frac{K[[t]]^n}{u(t)K[[t]]^n}=\dim \frac{K[[t]]^n}{g(t)K[[t]]^n\cap h(t)K[[t]]^n}=\\=
\dim \frac{K[[t]]^n}{g(t)K[[t]]^n}+\dim \frac{g(t)K[[t]]^n}{g(t)K[[t]]^n\cap h(t)K[[t]]^n}=\\=
\vv(\det g(t))+\dim \frac{g(t)K[[t]]^n+h(t)K[[t]]^n}{h(t)K[[t]]^n}\leq \vv(\det g(t))+\vv(\det h(t)).
\end{multline*}
\end{proof}

\subsection{The triangle inequality for the $G$-stable rank}

\begin{proposition}\label{prop:triangle}
For tensors $v,w\in V$ we have $\rk_{\alpha}^G(v+w)\leq \rk_{\alpha}^G(v)+\rk_{\alpha}^G(w)$.
\end{proposition}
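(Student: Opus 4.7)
The plan is to use Lemma~\ref{lem:gt} componentwise in each tensor factor to build a single degeneration that simultaneously witnesses near-optimal slopes for both $v$ and $w$.

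First, fix $\varepsilon > 0$. Choose $g(t) = (g_1(t),\dots,g_d(t)) \in G(\PS)$ with $\vv(g(t)\cdot v) > 0$ and $\mu_\alpha(g(t),v) < \rk^G_\alpha(v) + \varepsilon$, and similarly $h(t) \in G(\PS)$ with $\mu_\alpha(h(t),w) < \rk^G_\alpha(w) + \varepsilon$. By replacing $g(t)$ by $g(t^a)$ and $h(t)$ by $h(t^b)$ for suitable positive integers $a,b$ (which multiplies both numerator and denominator of the slope by the same factor and therefore leaves $\mu_\alpha$ unchanged), I can arrange that $\vv(g(t)\cdot v) = \vv(h(t)\cdot w) =: N > 0$.

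Next, apply Lemma~\ref{lem:gt} separately in each factor $\GL(V_i, \PS)$ to the pair $g_i(t), h_i(t)$ to obtain $u_i(t), g_i'(t), h_i'(t) \in \GL(V_i, \PS)$ with
\[
u_i(t) = g_i'(t) h_i(t) = h_i'(t) g_i(t) \quad \text{and} \quad \vv(\det u_i(t)) \leq \vv(\det g_i(t)) + \vv(\det h_i(t)).
\]
Set $u(t) = (u_1(t),\dots,u_d(t)) \in G(\PS)$. Since $h'(t) = (h_1'(t),\dots,h_d'(t))$ has entries in $\PS$ (acting on $\PS \otimes V$ cannot lower the $t$-valuation), the identity $u(t)\cdot v = h'(t)\cdot (g(t)\cdot v)$ gives $\vv(u(t)\cdot v) \geq \vv(g(t)\cdot v) = N$. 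Analogously, $u(t) \cdot w = g'(t) \cdot (h(t) \cdot w)$ gives $\vv(u(t)\cdot w) \geq N$. Therefore $\vv(u(t)\cdot(v+w)) \geq N > 0$.

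Finally, I combine these inequalities. Summing the componentwise bounds on $\vv(\det u_i(t))$ weighted by $\alpha_i$ yields
\[
\sum_{i=1}^d \alpha_i \vv(\det u_i(t)) \leq \sum_{i=1}^d \alpha_i \vv(\det g_i(t)) + \sum_{i=1}^d \alpha_i \vv(\det h_i(t)),
\]
and dividing by $\vv(u(t)\cdot(v+w)) \geq N$ gives
\[
\mu_\alpha(u(t), v+w) \leq \frac{\sum_i \alpha_i \vv(\det g_i(t))}{N} + \frac{\sum_i \alpha_i \vv(\det h_i(t))}{N} = \mu_\alpha(g(t),v) + \mu_\alpha(h(t),w),
\]
which is less than $\rk^G_\alpha(v) + \rk^G_\alpha(w) + 2\varepsilon$. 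Taking the infimum in the definition of $\rk^G_\alpha(v+w)$ and letting $\varepsilon \to 0$ gives the triangle inequality. The main subtlety is Step~2 (synchronizing the valuations $\vv(g(t)\cdot v)$ and $\vv(h(t)\cdot w)$), without which the two slope numerators cannot be combined over a common denominator; everything else reduces to applying Lemma~\ref{lem:gt} and tracking valuations through $\PS$-linear actions.
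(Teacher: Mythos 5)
Your argument is correct and follows essentially the same route as the paper's proof: synchronize the valuations $\vv(g(t)\cdot v)$ and $\vv(h(t)\cdot w)$ by reparametrizing $t\mapsto t^a$, apply Lemma~\ref{lem:gt} factorwise to produce $u(t)=h'(t)g(t)=g'(t)h(t)$, and then track the two valuation bounds to obtain $\mu_\alpha(u(t),v+w)\leq\mu_\alpha(g(t),v)+\mu_\alpha(h(t),w)$. The only cosmetic difference is that you make the $\varepsilon$-approximation and the observation that multiplication by $h'(t)\in G(\PS)$ cannot lower $t$-valuation explicit, whereas the paper states these implicitly.
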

\begin{proof}
Suppose that $g(t),h(t)\in G(K[[t]])$. If we replace $t$ by $t^e$, then $\mu_\alpha(g(t),v)$ does not change.
Without changing $\mu_{\alpha}(g(t),v)$ and $\mu_{\alpha}(h(t),w)$ we may assume that $\vv(g(t)\cdot v)=\vv(h(t)\cdot w)=s>0$.
 Then there exist  $u(t),g'(t),h'(t)\in G(K[[t]])$
such that $u(t)=h'(t)g(t)=g'(t)h(t)$ and $\vv(\det u_i(t))\leq \vv(\det g_i(t) )+\vv(\det h_i(t) )$ for all $i$ by Lemma~\ref{lem:gt}.
We get
\begin{multline*}
\vv(u(t)\cdot (v+w))=\vv(h'(t)g(t)\cdot v+g'(t)h(t)\cdot w)\geq\\ \geq
\min\{\vv(h'(t)g(t)\cdot v),\vv(g'(t)h(t)\cdot w\} \geq \min\{\vv(g(t)\cdot v),\vv(h(t)\cdot w)\}=s
\end{multline*}
and
$$
\sum_{i=1}^d \alpha_i \vv(\det u_i(t))\leq \sum_{i=1}^d \alpha_i \vv(\det g_i(t))+ \sum_{i=1}^d \alpha_i \vv(\det h_i(t))=
s\mu_{\alpha}(g(t),v)+s\mu_{\alpha}(h(t),w).
$$
It follows that
\begin{multline*}
\mu_\alpha(u(t),v+w)=\frac{\sum_{i=1}^d \alpha_i \vv(\det u_i(t))}{\vv(u(t)\cdot (v+w))}\leq\\ \leq \frac{s\mu_{\alpha}(g(t),v)+s\mu_{\alpha}(h(t),w)}{s}=
\mu_{\alpha}(g(t),v)+\mu_{\alpha}(h(t),w).
\end{multline*}
Taking the infimum over all $g(t)$ and $h(t)$ gives
$\rk^G_\alpha(v+w)\leq \rk^G_\alpha(v)+\rk^G_{\alpha}(w)$.
\end{proof}
\begin{corollary}\label{cor:GstableSlice}
For any tensor $v\in V$ we have
$$
\rk^G(v)\leq \slicerk(v).
$$
\end{corollary}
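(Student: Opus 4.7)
The plan is to derive the corollary directly from two already-established facts: the triangle inequality for the $G$-stable rank (Proposition~\ref{prop:triangle}) and the observation in the preceding corollary that any tensor of slice rank $1$ has $G$-stable rank equal to $1$. With these in hand the argument is essentially mechanical.

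First I would set $r = \slicerk(v)$ and invoke the definition of slice rank to write $v = v_1 + v_2 + \cdots + v_r$ with each $v_i$ of slice rank $1$; the degenerate case $r = 0$ (i.e.\ $v = 0$) is handled by the empty-sum convention so that both sides vanish. Next I would apply Proposition~\ref{prop:triangle} inductively, which is a routine induction on $r$ built on the binary triangle inequality $\rk^G(x+y)\leq \rk^G(x)+\rk^G(y)$, to obtain
\[
\rk^G(v) \leq \sum_{i=1}^r \rk^G(v_i).
\]
Finally, the preceding corollary identifies each $\rk^G(v_i)$ with $1$, so the right-hand side equals $r = \slicerk(v)$, giving the desired bound.

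There is no real obstacle here: the two ingredients do all the work and the combination is purely formal. The only point worth flagging is that the triangle inequality is stated for two summands and must be iterated, but this causes no difficulty. One could also phrase the argument slightly more carefully by noting that the estimate depends only on $\alpha = (1,\dots,1)$ because the slice-rank-$1$ identity $\rk^G(v_i)=1$ is specific to that weight; no finer analysis of the weight vector is required.
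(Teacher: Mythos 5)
Your proposal is correct and matches the paper's own argument: decompose $v$ as a sum of $r=\slicerk(v)$ slice-rank-$1$ tensors, iterate the triangle inequality of Proposition~\ref{prop:triangle}, and use that each slice-rank-$1$ summand has $G$-stable rank $1$. The extra care you take about the $r=0$ case and the induction on the binary triangle inequality is sound but not substantively different from the paper.
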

\begin{proof}
By definition, we can write $v=v_1+v_2+\cdots+v_r$ where $r=\slicerk(v)$ and $v_1,v_2,\dots,v_r$ are tensors of slice rank $1$.
Now we have $\rk^G(v)=\rk^G(v_1+\cdots+v_r)\leq \rk^G(v_1)+\cdots+\rk^G(v_r)=1+\cdots+1=r=\slicerk(v)$.
\end{proof}

\subsection{The additive property of the $G$-stable rank}

\begin{proposition}\label{prop:additive}
If $d\geq 2$, the $G$-stable rank is additive: we have $\rk^G_\alpha(v\boxplus w)=\rk^G_\alpha(v)+\rk^G_\alpha(w)$.
\end{proposition}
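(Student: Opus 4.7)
We prove the two inequalities separately. The easy inequality $\rk^G_\alpha(v\boxplus w) \leq \rk^G_\alpha(v)+\rk^G_\alpha(w)$ follows from a direct construction. Given near-optimal $g(t)\in G_V(K[[t]])$ for $v$ and $h(t)\in G_W(K[[t]])$ for $w$ with $\vv(g(t)\cdot v),\vv(h(t)\cdot w)>0$, reparametrize $t\mapsto t^e$ independently in $g$ and $h$---which preserves both slopes---to arrange $\vv(g(t)\cdot v)=\vv(h(t)\cdot w)=s$. Viewing $(g(t),h(t))$ as an element of $G_{V\boxplus W}(K[[t]])$ via the block-diagonal embedding $G_V\times G_W\hookrightarrow G_{V\boxplus W}$, we have $(g(t),h(t))\cdot(v\boxplus w)=(g(t)\cdot v)\boxplus(h(t)\cdot w)$ with valuation $s$, while the $i$-th component satisfies $\vv(\det(g_i(t)\oplus h_i(t)))=\vv(\det g_i(t))+\vv(\det h_i(t))$. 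Hence $\mu_\alpha((g(t),h(t)),v\boxplus w)=\mu_\alpha(g(t),v)+\mu_\alpha(h(t),w)$, and taking infima yields the inequality.

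For the reverse inequality, Theorem~\ref{theo:1PSG} reduces the task to showing $\mu_\alpha(\lambda(t),v\boxplus w)\geq \rk^G_\alpha(v)+\rk^G_\alpha(w)$ for every $1$-parameter subgroup $\lambda(t)=(\lambda_1(t),\ldots,\lambda_d(t))\in G_{V\boxplus W}(K[t])$ with $\vv(\lambda(t)\cdot(v\boxplus w))>0$. The plan is to construct from $\lambda(t)$ companion $1$-parameter subgroups $\lambda^V(t)\in G_V(K[t])$ and $\lambda^W(t)\in G_W(K[t])$ via filtrations. For each $i$, choose a basis of $V_i\oplus W_i$ diagonalizing $\lambda_i(t)$ with nonnegative weights $x^{(i)}_1\geq\cdots\geq x^{(i)}_{n_i+m_i}\geq 0$, and let $F^{(i)}_{\geq a}\subseteq V_i\oplus W_i$ denote the span of basis vectors of weight $\geq a$. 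The restrictions $F^{(i)}_{\geq a}\cap V_i$ and $F^{(i)}_{\geq a}\cap W_i$ give filtrations of $V_i$ and $W_i$; picking bases compatible with these determines $\lambda^V_i$ and $\lambda^W_i$, diagonal in these bases with the corresponding filtration weights. From $\dim(F^{(i)}_{\geq a}\cap V_i)+\dim(F^{(i)}_{\geq a}\cap W_i)\leq \dim F^{(i)}_{\geq a}$ combined with Abel summation one obtains $\vv(\det\lambda^V_i)+\vv(\det\lambda^W_i)\leq\vv(\det\lambda_i)$, while a term-by-term valuation comparison (the image under $\lambda_i$ of each basis vector of $V_i$ has valuation exactly its filtration weight) gives $\vv(\lambda^V(t)\cdot v)\leq\vv(\lambda(t)\cdot v)$ and $\vv(\lambda^W(t)\cdot w)\leq\vv(\lambda(t)\cdot w)$.

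Writing $a=\vv(\lambda^V\cdot v)$, $b=\vv(\lambda^W\cdot w)$, $c=\sum_i\alpha_i\vv(\det\lambda^V_i)$, $e=\sum_i\alpha_i\vv(\det\lambda^W_i)$, the definition of $\rk^G_\alpha$ gives $c\geq\rk^G_\alpha(v)\,a$ and $e\geq\rk^G_\alpha(w)\,b$ whenever $a,b>0$. The numerator of $\mu_\alpha(\lambda(t),v\boxplus w)$ is therefore at least $c+e\geq (\rk^G_\alpha(v)+\rk^G_\alpha(w))\min(a,b)$, while the denominator satisfies $\vv(\lambda(t)\cdot(v\boxplus w))\geq\min(a,b)$; when the denominator equals $\min(a,b)$ the slope bound $\mu_\alpha\geq\rk^G_\alpha(v)+\rk^G_\alpha(w)$ is immediate. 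The main obstacle is \emph{cancellation} between the leading terms of $\lambda(t)\cdot v$ and $\lambda(t)\cdot w$, which can strictly inflate $\vv(\lambda(t)\cdot(v\boxplus w))$ beyond $\min(a,b)$; resolving this requires a refined choice of $\lambda^V,\lambda^W$ that raises $a$ and $b$ to match the inflated denominator using higher filtration weights, while still satisfying $\vv(\det\lambda^V_i)+\vv(\det\lambda^W_i)\leq\vv(\det\lambda_i)$. Boundary cases where $a=0$ or $b=0$ are handled similarly, since positivity of $\vv(\lambda(t)\cdot(v\boxplus w))$ then forces compensating cancellation that pins down the necessary weight structure.
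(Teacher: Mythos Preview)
Your easy direction is fine. The hard direction, however, has a genuine gap that you yourself flag but do not close. The inequalities you establish, namely $a\leq\vv(\lambda(t)\cdot v)$, $b\leq\vv(\lambda(t)\cdot w)$, and $c+e\geq a\,\rk^G_\alpha(v)+b\,\rk^G_\alpha(w)$, point the wrong way: to bound the slope from below you need an \emph{upper} bound on the denominator $s=\vv(\lambda(t)\cdot(v\boxplus w))$ in terms of $a,b$, and you only have $s\geq\min(a,b)$. When $s>\min(a,b)$ --- which can occur even without cross-cancellation between $\lambda(t)\cdot v$ and $\lambda(t)\cdot w$, simply from cancellation internal to $\lambda(t)\cdot v$ making $\vv(\lambda(t)\cdot v)>a$ --- your bound on the numerator is too weak. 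The sentence ``resolving this requires a refined choice of $\lambda^V,\lambda^W$\ldots'' is a description of what would have to be done, not an argument that it can be; the boundary cases $a=0$ or $b=0$ are likewise left unhandled.

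The paper bypasses this difficulty by a device that also explains why $d\geq 2$ is needed. Working with an arbitrary $g(t)\in G(K[[t]])$ (not just a one-parameter subgroup), write each $g_i(t)$ in block form $\left(\begin{smallmatrix}a_i&b_i\\c_i&d_i\end{smallmatrix}\right)$ relative to $V_i\oplus W_i$. Left-multiplication of $g_i(t)$ by a unit of $\GL_{n_i+m_i}(K[[t]])$ changes neither $\vv(\det g_i)$ nor $\vv(g(t)\cdot(v\boxplus w))$, so via the Smith normal form one may arrange $c_1=0$ and simultaneously $b_2=\cdots=b_d=0$. With these \emph{opposite} triangularities, the projection of $g(t)\cdot(v\boxplus w)$ onto $V_1\otimes\cdots\otimes V_d$ is exactly $a(t)\cdot v$ (the contribution $(b_1\otimes\cdots\otimes b_d)\cdot w$ from $w$ dies because $b_2=0$), and the projection onto $W_1\otimes\cdots\otimes W_d$ is exactly $d(t)\cdot w$ (because $c_1=0$). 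Hence $\vv(a(t)\cdot v)\geq s$ and $\vv(d(t)\cdot w)\geq s$ directly, with no cancellation to manage; combined with $\det g_i=\det a_i\det d_i$ this gives $\sum_i\alpha_i\vv(\det g_i)\geq s(\rk^G_\alpha(v)+\rk^G_\alpha(w))$. This asymmetric block-triangular reduction across different tensor factors is the missing idea, and your symmetric filtration construction does not supply it.
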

\begin{proof}
From Proposition~\ref{prop:triangle} follows that $\rk^G_\alpha(v\boxplus w)\leq \rk^G_\alpha(v\boxplus 0)+\rk^G_\alpha(0\boxplus w)\leq \rk^G_\alpha(v)+\rk^G_\alpha(w)$.
Suppose that $g(t)\in G(K[[t]])$ with
$\vv(g(t)\cdot (v\boxplus w))=t^s$ for some $s>0$. Assume that the block form of $g_i(t)$ with respect to the decomposition
$V_i\oplus W_i$ is
$$
g_i(t)=\begin{pmatrix}
a_i(t) & b_i(t)\\
c_i(t) & d_i(t)
\end{pmatrix}.
$$
The $K[[t]]$-module generated by the rows of $a_1(t)$ and $c_1(t)$ is a free submodule of $K[[t]]^{n_1}$
of rank $n_1$,  where $n_1=\dim V_i$.
Using the Smith normal form, there exist invertible matrices in $p(t)\in \GL_{n_1+m_1}(K[[t]])$ and $q(t)\in \GL_{n_1}(K[[t]])$
such that
$$
\begin{pmatrix}
a_1(t)\\
c_1(t)
\end{pmatrix}=p(t)\begin{pmatrix}r(t)\\ 0\end{pmatrix} q(t)
$$
where $r(t)$ is an $n_1\times n_1$ diagonal matrix.
It follows that 
$$p(t)^{-1}g_1(t)=\begin{pmatrix}
r(t) & \star \\
0 & \star\end{pmatrix}
$$
So without loss of generality, we may assume that $c_1(t)=0$.
A similar argument shows that we may assume without loss of generality that $b_2(t)=b_3(t)=\cdots=b_d(t)=0$.
If we project $g(t)\cdot v\boxplus w$ onto $V$, we get $a(t)\cdot v+b(t)\cdot w=a(t)\cdot v$ because $b_2(t)=0$.
This implies that $\vv(a(t)\cdot v)\geq s$ and $\sum_{i=1}^d \alpha_i\vv(\det a_i(t))\geq s\rk^G_{\alpha}(v)$.
Similarly, the projection of $g(t)\cdot v\boxplus w$ onto $W$ is equal to $c(t)\cdot v+d(t)\cdot w=d(t)\cdot w$ because $c_1(t)=0$.
Therefore, we have $\vv(d(t)\cdot w)\geq s$ and $\sum_{i=1}^d\alpha_i \vv(\det d_i(t))\geq s \rk^G_\alpha(w)$.
Since $\det g_i(t)=\det a_i(t)\det d_i(t)$ because of the upper triangular or lower triangular form of $g_i(t)$, we get
$$
\sum_{i=1}^s \alpha_i \vv(\det g_i(t))=\sum_{i=1}^s \alpha_i \vv(\det a_i(t))+\sum_{i=1}^s \alpha_i \vv(\det d_i(t))\geq
s(\rk^G_\alpha(v)+\rk^G_{\alpha}(w)).
$$
This proves that $\rk^G_\alpha(v\boxplus w)\geq \rk^G_\alpha(v)+\rk^G_\alpha(w)$.
\end{proof}

\section{The stable $T$-rank}
\subsection{The $G$-stable rank and the $T$-stable rank}
The $G$-stable $\alpha$-rank of a tensor $v$ is the maximum of $\mu_{\alpha}(\lambda(t),v)$
where $\lambda(t)$ is a $1$-parameter subgroup of $G$ with $\vv(\lambda(t)\cdot v)>0$. A $1$-parameter subgroup is
contained in some maximal torus $T$ (which itself is contained in some Borel subgroup $B$ of $G$).
We can fix a maximal torus $T$ and consider all $1$-parameter subgroups contained in $T$. Choosing a maximal torus of $G$
corresponds to choosing a basis in each vector space $V_i$. 
So let us choose a basis in each $V_i$ so that we can identify $\GL(V_i)$ with $\GL_{n_i}$.
Let $T_k\subseteq \GL_k$ be the subgroup of invertible diagonal $k\times k$ matrices, and $T=T_{n_1}\times T_{n_2}\times \cdots\times T_{n_d}\subseteq G$. Then $T$ is a maximal torus of $G$.
\begin{definition}
We define the $\alpha$-stable $T$-rank $\rk^T_{\alpha}(v)$ as the infimum over all $\mu_{\alpha}(\lambda(t),v)$ where $\lambda(t)\in T(K[t])$ is a $1$-parameter subgroup of $T$ with $\vv(\lambda(t)\cdot v)>0$.
\end{definition}
Since every $1$-parameter subgroup is conjugate to a $1$-parameter subgroup in the maximal torus, we get the following corollary.
\begin{corollary}
We have
$$
\rk^G_{\alpha}(v)=\inf_{g\in G} \rk^T_\alpha(g\cdot v).
$$
\end{corollary}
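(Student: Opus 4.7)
The plan is to prove the two inequalities separately, using Theorem~\ref{theo:1PSG} to reduce the $G$-stable rank to an infimum over $K$-rational $1$-parameter subgroups, and then exploiting the fact that every such subgroup is $K$-rationally conjugate into the fixed torus $T$.

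The underlying conjugacy fact is the workhorse: a $K$-rational $1$-parameter subgroup $\lambda_i\colon \Gm\to \GL(V_i)$ determines a $\Z$-grading $V_i=\bigoplus_{j\in\Z}V_i^{(j)}$ with each $V_i^{(j)}$ defined over $K$, so after choosing a $K$-basis of $V_i$ compatible with this grading there is a $K$-rational $g_i\in \GL(V_i)$ with $g_i^{-1}\lambda_i(t)g_i\in T_{n_i}(K[t])$. Taking products gives, for any $1$-parameter subgroup $\lambda(t)\in G(K[t])$, some $g=(g_1,\dots,g_d)\in G(K)$ with $g^{-1}\lambda(t)g\in T(K[t])$. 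Since $g$ is $t$-independent and lies in $G(K)\subseteq G(K[[t]])$ together with $g^{-1}$, multiplication by $g$ or $g^{-1}$ preserves the $t$-valuation, so
$$\vv(\lambda(t)\cdot v)=\vv(g^{-1}\lambda(t)g\cdot (g^{-1}v)),$$
and $\det(g_i^{-1}\lambda_i(t)g_i)=\det\lambda_i(t)$, giving
$$\mu_\alpha(\lambda(t),v)=\mu_\alpha(g^{-1}\lambda(t)g,g^{-1}\cdot v).$$

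For the inequality $\rk^G_\alpha(v)\leq \inf_{g\in G}\rk^T_\alpha(g\cdot v)$, fix $g\in G(K)$ and any $\lambda(t)\in T(K[t])$ with $\vv(\lambda(t)\cdot(g\cdot v))>0$. The conjugate $g\lambda(t)g^{-1}$ is a $1$-parameter subgroup of $G$, and by the identity above $\mu_\alpha(g\lambda(t)g^{-1},v)=\mu_\alpha(\lambda(t),g\cdot v)$. By Theorem~\ref{theo:1PSG} the left-hand side is an upper bound for $\rk^G_\alpha(v)$, and taking the infimum over all such $\lambda(t)$ gives $\rk^G_\alpha(v)\leq \rk^T_\alpha(g\cdot v)$; finally taking the infimum over $g$ yields the desired inequality.

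For the reverse inequality, use Theorem~\ref{theo:1PSG} to pick, for each $\varepsilon>0$, a $1$-parameter subgroup $\mu(t)\in G(K[t])$ with $\vv(\mu(t)\cdot v)>0$ and $\mu_\alpha(\mu(t),v)<\rk^G_\alpha(v)+\varepsilon$. By the conjugacy observation above, write $\mu(t)=g\lambda(t)g^{-1}$ with $\lambda(t)\in T(K[t])$ and $g\in G(K)$. Then
$$\mu_\alpha(\lambda(t),g^{-1}\cdot v)=\mu_\alpha(\mu(t),v)<\rk^G_\alpha(v)+\varepsilon,$$
and since $\vv(\lambda(t)\cdot(g^{-1}v))=\vv(\mu(t)\cdot v)>0$, the subgroup $\lambda(t)$ is admissible in the definition of $\rk^T_\alpha(g^{-1}\cdot v)$, so $\rk^T_\alpha(g^{-1}\cdot v)<\rk^G_\alpha(v)+\varepsilon$. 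Letting $\varepsilon\to 0$ gives $\inf_{h\in G}\rk^T_\alpha(h\cdot v)\leq \rk^G_\alpha(v)$.

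The only nontrivial input is the $K$-rational conjugacy of $1$-parameter subgroups into $T$, which I expect to be the main (and essentially only) obstacle; everything else is bookkeeping with the valuation and determinant, both of which are conjugation-invariant in the strong sense needed here.
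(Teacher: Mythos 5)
Your argument is correct and is exactly the route the paper takes: reduce to $K$-rational one-parameter subgroups via Theorem~\ref{theo:1PSG}, note that any such subgroup is $K$-rationally conjugate into $T$ (because the $\Gm$-weight spaces of a $K$-rational cocharacter are defined over $K$), and use that conjugation by a $t$-independent $g\in G(K)$ changes neither the numerator ($\det(g_i^{-1}\lambda_i(t)g_i)=\det\lambda_i(t)$) nor the denominator ($\vv(g^{-1}\lambda(t)g\cdot(g^{-1}v))=\vv(\lambda(t)\cdot v)$) of the slope. One small bookkeeping slip in the first inequality: from your displayed identity one gets $\mu_\alpha(g\lambda(t)g^{-1},v)=\mu_\alpha(\lambda(t),g^{-1}\cdot v)$, not $\mu_\alpha(\lambda(t),g\cdot v)$; either write the conjugate as $g^{-1}\lambda(t)g$ or start from $\rk^T_\alpha(g^{-1}\cdot v)$ --- since you subsequently take the infimum over all $g\in G$, this does not affect the conclusion.
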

\subsection{The $T$-stable rank and linear programming}

For a tensor $v=(v_{i_1,i_2,\dots,i_d})\in V=K^{n_1\times n_2\times \cdots \times n_d}$ we define its support by
$$
\supp(v)=\{(i_1,\dots,i_d)\mid v_{i_1,i_2,\dots,i_d}\neq 0\}.
$$
As we will see, $\rk^T_{\alpha}(v)$ only depends on $\supp(v)$ and $\alpha$. For a nonnegative integer $k$, let $\underline{k}=\{1,2,\dots,k\}$.
We will fix a support $S\subseteq \underline{n}_1\times \underline{n}_2\times \cdots\times \underline{n}_d$
and compute the corresponding $\alpha$-stable $T$-rank. 
\begin{definition}
Let $x(i,j)$ with $1\leq i\leq d$ and $1\leq j\leq n_i$ be real variables and $S\subseteq \underline{n}_1\times \cdots\times \underline{n}_d$ be a support.
The linear program $\LP_\alpha(S)$ asks to minimize $\sum_{i=1}^d \alpha_i\sum_{j=1}^{n_i}x(i,j)$
under the constraints:
\begin{enumerate}
\item $x(i,j)\geq 0$ for $i=1,2,\dots,d$ and $1\leq j\leq n_i$;
\item $\sum_{i=1}^{d}x(i,s_i)\geq 1$ for all $s\in S$,
\end{enumerate}
\end{definition}
\begin{theorem}
If $v\in V$ has support $S$, then $\rk^T_\alpha(v)$ is the value of the linear program $\LP_\alpha(S)$.
\end{theorem}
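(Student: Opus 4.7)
The plan is to unwind the definition of $\rk^T_\alpha(v)$ in coordinates on $T$, observe that the only data that matters is $S=\supp(v)$, and then match the resulting optimization problem term-by-term with $\LP_\alpha(S)$.

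First I would fix a $1$-parameter subgroup $\lambda(t)\in T(K[t])$; after our choice of basis each $\lambda_i(t)$ is diagonal with entries $t^{x(i,1)},\dots,t^{x(i,n_i)}$ where $x(i,j)\in \Z_{\geq 0}$. A direct calculation gives $\vv(\det\lambda_i(t))=\sum_j x(i,j)$ and
$$\lambda(t)\cdot v \;=\; \sum_{s\in S} v_s\, t^{x(1,s_1)+\cdots+x(d,s_d)}\, [s_1,\dots,s_d],$$
so $\vv(\lambda(t)\cdot v)=\min_{s\in S}\sum_i x(i,s_i)$. This already shows that $\rk^T_\alpha(v)$ depends on $v$ only through $S=\supp(v)$, and the condition $\vv(\lambda(t)\cdot v)>0$ is, for integer exponents, exactly $\sum_i x(i,s_i)\geq 1$ for every $s\in S$. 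The crucial homogeneity remark is that replacing $t$ by $t^k$ scales each $x(i,j)$ by $k$ and leaves $\mu_\alpha(\lambda(t),v)$ unchanged, which will let me move freely between integer-exponent $1$-parameter subgroups and nonnegative rational feasible points of $\LP_\alpha(S)$.

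Write $p$ for the optimum value of $\LP_\alpha(S)$. For the upper bound $\rk^T_\alpha(v)\leq p$, I would take an optimal vertex $x^*$ of $\LP_\alpha(S)$. Since the constraint matrix has integer entries, $x^*$ is rational, and clearing denominators by a positive integer $k$ produces integer exponents $kx^*(i,j)$. The resulting $\lambda(t)\in T(K[t])$ satisfies $\vv(\lambda(t)\cdot v)=k\min_s\sum_i x^*(i,s_i)\geq k>0$ and
$$\mu_\alpha(\lambda(t),v)=\frac{\sum_i \alpha_i \sum_j x^*(i,j)}{\min_s \sum_i x^*(i,s_i)}\;\leq\;\sum_i \alpha_i \sum_j x^*(i,j)=p.$$

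For the reverse inequality, I would let $\lambda(t)\in T(K[t])$ be any $1$-parameter subgroup with $m:=\vv(\lambda(t)\cdot v)>0$; then $y(i,j):=x(i,j)/m$ is nonnegative and satisfies $\sum_i y(i,s_i)\geq 1$ for each $s\in S$, so $y$ is feasible for $\LP_\alpha(S)$. Hence $\sum_i \alpha_i\sum_j y(i,j)\geq p$, which rearranges to $\mu_\alpha(\lambda(t),v)\geq p$, and taking the infimum gives $\rk^T_\alpha(v)\geq p$. There is no real obstacle to this argument; everything is bookkeeping once one makes the homogeneity observation, and the only point that deserves a sentence of care is the existence of a rational optimal vertex for $\LP_\alpha(S)$, which follows from the integrality of the constraint data (the coefficients $\alpha_i$ enter only in the objective).
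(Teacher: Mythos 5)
Your proposal is correct and follows essentially the same route as the paper's proof: write out $\vv(\det\lambda_i(t))$ and $\vv(\lambda(t)\cdot v)$ in terms of the exponents $x(i,j)$, observe that the slope only depends on $S=\supp(v)$, use the homogeneity under $t\mapsto t^k$ to pass between integer one-parameter subgroups and rational feasible points, and invoke rationality of the LP optimum. Your version merely splits the equality into two inequalities where the paper handles both directions in a single normalization, and you correctly state the feasibility constraint as $\sum_i x(i,s_i)\geq 1$ (without the spurious $\alpha_i$ that appears as a typo in the paper's own argument).
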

\begin{proof}
Suppose $\lambda(t)=(\lambda_1(t),\dots,\lambda_d(t))\in T(K[t])$ is a $1$-parameter subgroup,
and $\lambda_i(t)$ is diagonal with entries $t^{x(i,1)},t^{x(i,2)},\cdots,t^{x(i,n_i)}$ where $x(i,j)$ is a nonnegative integer for all $i,j$.
Also, assume that $\vv(\lambda(t)\cdot v)=q>0$ where $v$ is a tensor with support $S$.
This means that $\sum_{i=1}^d \alpha_ix(i,s_i)\geq q$ for all $(s_1,s_2,\dots,s_d)\in S$.
We have $\mu_{\alpha}(\lambda(t),v)=\frac{1}{q}(\sum_{i=1}^d\alpha_i\sum_{j=1}^{n_i}x(i,j))$
and $\rk^T_\alpha(v)$ is the infimum of all $\mu_{\alpha}(\lambda(t),v)$.
If we replace $x(i,j)$ by $x(i,j)/q$, then
we have $\sum_{i=1}^d \alpha_ix(i,s_i)\geq 1$ for all $(s_1,\dots,s_d)\in S$
and $\mu_{\alpha}(\lambda(t),v)=\sum_{i=1}^d\alpha_i\sum_{j=1}^{n_i}x(i,j)$. This shows
that $\rk^T_\alpha(v)$ is the infimum of  $\sum_{i=1}^d\alpha_i\sum_{j=1}^{n_i}x(i,j)$ under the constraints
$x(i,j)\geq 0$ for all $i,j$, and $\sum_{i=1}^{d}x(i,s_i)\geq 1$ for all $s\in S$ for all $i,j$.
This is the linear program $\LP_\alpha(S)$, except that the numbers $x(i,j)$ have to be rational. 
However, since the constraints are inequalities with coefficients in $\Q$, there exists an optimal solution over $\Q$.
\end{proof}

\begin{example}\label{example:222tensor}
Consider the tensor 
$$v=[2,1,1]+[1,2,1]+[1,1,2]\in K^{2\times 2\times 2}=K^2\otimes K^2\otimes K^2.
$$ 
with support $S=\{(2,1,1),(1,2,1),(1,1,2)\}$.
We have to solve the following linear program $\LP(S)=\LP_{(1,1,1)}(S)$: minimize $\sum_{i=1}^3\sum_{j=1}^2 x(i,j)$
under the constraints $x(i,j)\geq 0$ for $i=1,2,3$ and $j=1,2$ and
\begin{eqnarray*}
x(1,2)+x(2,1)+x(3,1) & \geq &1\\
x(1,1)+x(2,2)+x(3,1) &\geq & 1\\
x(1,1)+x(2,1)+x(3,2) & \geq & 1
\end{eqnarray*}
 An optimal solution is $x(1,1)=x(2,1)=x(3,1)=\frac{1}{2}$ and $x(1,2)=x(2,2)=x(3,2)=0$.
 So the optimal value is $\rk^T(v)=3\cdot \frac{1}{2}=\frac{3}{2}$. It follows that $\rk^G(v)\leq \rk^T(v)\leq \frac{3}{2}$.
 It is easy to see that $\slicerk(v)>1$ (and thus equal $2$). We will show that $\rk^G(v)=\frac{3}{2}$.
 
Suppose that $\rk^G(v)<\frac{3}{2}$. Then there exists a tensor
  $w\in K^{2\times 2\times 2}$ in the same $G$-orbit as $v$ such that  $\rk^T(w)<\frac{3}{2}$. 
  Let  $S'=\supp(w)\subseteq \underline{2}\times\underline{2}\times \underline{2}$ be the support of $w$.
Also assume that $\{x(i,j)\}$ is an optimal solution for the linear program $\LP(S')$.
 By permuting coordinates, we may assume that $x(i,1)\geq x(i,2)$ for $i=1,2,3$.
 The support $S'$ is not contained in $\{1\}\times \{1,2\}\times \{1,2\}$ because otherwise $w$ and $v$ would have slice rank $1$.
 Therefore, $(2,i,j)\in S'$ for some $i,j$. Because of the ordering of the variables $x(i,j)$, $(2,1,1)\in S'$.
 Similarly, $(1,2,1),(1,1,2)\in S'$. Now $\supp(w)=S'\supseteq S=\supp(v)$, so $\rk^T(w)\geq \rk^T(v)=\frac{3}{2}$.
 Contradiction. 
\end{example}
\subsection{Comparison between the $G$-stable rank and the slice rank}

Besides the slice rank, we will also define a slice rank relative to a maximal torus $T$, or equivalently, relative to bases choices for $V_1,V_2,\dots,V_d$.
\begin{definition}
We say that a tensor $v$ has $T$-slice rank $1$ if $v$ is contained in a space of the form 
$$V_{i,j}=V_1\otimes V_2\otimes \cdots\otimes V_{i-1}\otimes [j]\otimes V_{i+1}\otimes \cdots \otimes V_d.
$$
Now the $T$-slice rank $\slicerk^T(v)$ of an arbitrary tensor $v$ is the smallest nonnegative integer $r$ such that $v$ is a sum of $r$ tensors of $T$-slice rank $1$.
\end{definition}
 The following result is clear from the definition of slice rank:
\begin{corollary}
We have
$$
\slicerk(v)=\min_{g\in G} \slicerk^T(g\cdot v).
$$
\end{corollary}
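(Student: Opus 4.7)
The plan is to prove the two inequalities separately.

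\emph{Easy direction:} $\slicerk(v)\le\min_{g\in G}\slicerk^T(g\cdot v)$. I would use two observations. First, every tensor of $T$-slice rank $1$ lies in some $V_{i,j}=V_1\otimes\cdots\otimes[j]\otimes\cdots\otimes V_d$, which is a slice of the form appearing in the definition of slice rank $1$ (with $w=[j]\in V_i$); hence $\slicerk(w)\le\slicerk^T(w)$ for every tensor $w$. Second, ordinary slice rank is $G$-invariant, because $g\in G$ carries the slice $V_1\otimes\cdots\otimes w\otimes\cdots\otimes V_d$ bijectively onto $V_1\otimes\cdots\otimes g_i(w)\otimes\cdots\otimes V_d$. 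Combining these, $\slicerk(v)=\slicerk(g\cdot v)\le\slicerk^T(g\cdot v)$ for every $g\in G$, and taking the infimum gives the desired inequality.

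\emph{Hard direction:} $\min_{g\in G}\slicerk^T(g\cdot v)\le\slicerk(v)$. I would start with an optimal slice-rank decomposition $v=\sum_{k=1}^r v_k$, where $r=\slicerk(v)$ and each $v_k$ is contained in a slice in factor $i_k$ through some vector $w_k\in V_{i_k}$; write $v_k=w_k\otimes_{i_k}M_k$, where $M_k$ lives in the tensor product of the remaining factors. Group the summands by $i_k$. The crucial observation is that for each fixed $i$ the partial sum $\sum_{k:\,i_k=i}v_k$ depends only on the span of $\{w_k:i_k=i\}$ in $V_i$: choosing a basis $e^{(i)}_1,\dots,e^{(i)}_{s_i}$ of this span and writing $w_k=\sum_j c_{k,j}e^{(i)}_j$, one rewrites
\[
\sum_{k:\,i_k=i} v_k \;=\; \sum_{j=1}^{s_i} e^{(i)}_j\otimes_i\Bigl(\sum_{k:\,i_k=i}c_{k,j}M_k\Bigr),
\]
a sum of $s_i\le|\{k:i_k=i\}|$ slice-rank-$1$ tensors whose slicing vectors in $V_i$ are now linearly independent. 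Applying this absorption step for every $i$ yields a slice-rank decomposition of $v$ into at most $r$ summands in total, with linearly independent slicing vectors in each factor. Extend each family $\{e^{(i)}_j\}_{j=1}^{s_i}$ to a basis of $V_i$ and let $g_i\in\GL(V_i)$ carry the standard basis to this new basis; then $g^{-1}\cdot v$ is a sum of at most $r$ tensors, each contained in some $V_{i,j}$, so $\slicerk^T(g^{-1}\cdot v)\le r$.

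The only nontrivial step is the absorption argument, which replaces linearly dependent slicing vectors in a common factor by an independent set. Without it, one could not in general realize all of the $w_k$ simultaneously as distinct standard basis vectors via a single change of basis, and the direct approach would fail. Everything else reduces to the $G$-invariance of the ordinary slice rank and routine linear algebra.
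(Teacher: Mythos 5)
Your proof is correct, and it fills in the details that the paper leaves out: the paper simply asserts this corollary as ``clear from the definition'' with no argument at all, so your write-up supplies exactly what the reader must supply for themselves. The easy direction ($\slicerk\le\slicerk^T$ and $G$-invariance of $\slicerk$) is routine. The hard direction is where the content lies, and your absorption step is the right move: starting from an arbitrary optimal decomposition $v=\sum_k w_k\otimes_{i_k}M_k$, you cannot in general find a single $g$ carrying every $w_k$ to a standard basis vector, because the $w_k$'s attached to the same factor may be linearly dependent. By first choosing a basis of $\operatorname{span}\{w_k : i_k=i\}$ for each $i$ and re-expanding, you obtain a decomposition of the same (or smaller) length whose slicing vectors are linearly independent within each factor, and then a single $g\in G$ sends all of them to standard basis vectors. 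This is precisely the lemma (implicit in Tao--Sawin's notes) that an optimal slice-rank decomposition can be taken so that the slicing vectors in each factor form an independent set, and without it the direct approach would fail for exactly the reason you name. No gaps.
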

The $T$-slice rank of $v$ depends only on its support $S=\supp(v)$ and can be expressed in terms of integer solutions of the linear program $\LP(S)$.
\begin{proposition}
The $T$-slice rank $\slicerk^T(v)$ is the smallest possible value of $\sum_{i=1}^d \sum_{j=1}^{n_i} x(i,j)$ where the $x(i,j)$ satisfy the constraints:
\begin{enumerate}
\item $x(i,j)\in \{0,1\}$ for $i=1,2,\dots,d$ and $1\leq j\leq n_i$;
\item $\sum_{i=1}^{d}x(i,s_i)\geq 1$ for all $s\in S$;
\end{enumerate}
\end{proposition}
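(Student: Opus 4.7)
The plan is to prove $\slicerk^T(v)$ equals the optimal value $r^\ast$ of the stated integer program by establishing both inequalities. The key observation is that for a $\{0,1\}$-valued vector $x$, the feasibility condition $\sum_{i=1}^d x(i,s_i) \geq 1$ for all $s \in S$ is precisely the combinatorial statement that the family of slices $\{V_{i,j} : x(i,j) = 1\}$ \emph{covers} $\supp(v)$, while the objective $\sum_{i,j} x(i,j)$ counts the number of slices used.

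First I would show $r^\ast \leq \slicerk^T(v)$. Given any decomposition $v = v_1 + \cdots + v_r$ with $v_k \in V_{i_k, j_k}$, define $x(i,j) = 1$ if $(i,j) \in \{(i_1,j_1), \ldots, (i_r,j_r)\}$ and $x(i,j) = 0$ otherwise. For any $s \in S = \supp(v)$ the coefficient $v_s = \sum_k (v_k)_s$ is nonzero, so some $(v_k)_s \neq 0$, forcing $s_{i_k} = j_k$ and hence $x(i_k, s_{i_k}) = 1$. Thus $x$ is feasible with objective value at most $r$, giving $r^\ast \leq r$; minimizing over decompositions yields $r^\ast \leq \slicerk^T(v)$.

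For the reverse inequality, given any feasible $\{0,1\}$-vector $x$ with support $I = \{(i,j) : x(i,j) = 1\}$, I would choose for each $s \in S$ some $\phi(s) = (i,j) \in I$ satisfying $s_i = j$, which exists by feasibility, and set $S_{i,j} = \phi^{-1}(i,j)$. Defining
\[
w_{(i,j)} = \sum_{s \in S_{i,j}} v_s \, [s_1, s_2, \ldots, s_d] \in V_{i,j}
\]
for each $(i,j) \in I$, the partition of $S$ by $\phi$ gives $v = \sum_{(i,j) \in I} w_{(i,j)}$, a $T$-slice decomposition of length at most $|I| = \sum_{i,j} x(i,j)$. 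Applying this to an optimal $x$ yields $\slicerk^T(v) \leq r^\ast$. The argument is essentially a direct translation of the combinatorial content of $T$-slice rank into a set-cover style integer program, so no serious obstacle arises; the only ingredient beyond unpacking definitions is the choice function $\phi$, which is a routine selection made possible by the feasibility constraints.
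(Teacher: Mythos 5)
Your proof is correct and follows essentially the same route as the paper: both amount to observing that the feasibility constraint $\sum_{i}x(i,s_i)\geq 1$ for all $s\in S$ is equivalent to $v$ lying in the subspace $V(x)=\sum_{x(i,j)=1}V_{i,j}$, and that membership in such a sum of slices is exactly what $T$-slice rank $\leq \sum x(i,j)$ means. The paper compresses the reverse direction into the phrase ``by definition,'' whereas you make the implicit selection of a slice for each support element explicit via the choice function $\phi$; that is a useful unpacking but not a different argument.
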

\begin{proof}
Suppose that $x(i,j)\in \{0,1\}$ for all $i,j$.
Define  
$$V(x)=\sum_{\scriptstyle i,j\atop \scriptstyle x(i,j)=1}V_{i,j}.$$
A vector $[s_1,s_2,\dots,s_d]$ lies in $V(x)$ if and only if $\sum_{i=1}^d x(i,s_i)\geq 1$.
So a tensor $v$ lies in $V(x)$ if and only if $\sum_{i=1}^d x(i,s_i)\geq 1$ for all $s\in \supp(v)$.
 By definition, $\slicerk^T(v)$ is the smallest possible value of $\sum_{i,j}x(i,j)$
such that $v\in V(x)$.
\end{proof}
It is now easy to see that $\rk^T(v)\geq \frac{1}{d}\srk^T(v)$ (and this implies $\rk^G(v)\geq \frac{1}{d}\srk(v)$): If $x(i,j)$ is a solution to the linear program ${\bf LP}(S)$
where $S=\supp(v)$, then we define $x'(i,j)\in \{0,1\}$ such that $x'(i,j)=1$ if $x(i,j)\geq \frac{1}{d}$ and $x'(i,j)=0$ otherwise.
If $s\in S$ then we have $\sum_{i=1}^d x(i,s_i)\geq 1$. It follows that $x(i,s_i)\geq \frac{1}{d}$ for some $i$ and $x'(i,s_i)=1$ for some $i$.
Therefore, $\sum_{i=1}^d x'(i,s_i)\geq 1$. Now $\slicerk^T(v)\leq \sum_{i,j}x'(i,j)\leq \sum_{i,j}dx(i,j)=d\rk^T(v)$. With a more refined argument, we can improve this bound:
\begin{proposition}\label{prop:GstableSlice3}
For $d\geq 2$ we have $\rk^T(v)\geq\frac{2}{d} \slicerk^T(v)$ and therefore $\rk^G(v)\geq \frac{2}{d}\slicerk(v)$.
\end{proposition}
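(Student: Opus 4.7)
By the earlier corollaries $\rk^G_\alpha(v) = \inf_{g \in G} \rk^T_\alpha(g \cdot v)$ and $\slicerk(v) = \min_{g \in G} \slicerk^T(g \cdot v)$, the claim $\rk^G(v) \geq \tfrac{2}{d}\slicerk(v)$ would follow once I establish the pointwise $T$-level inequality $\slicerk^T(v) \leq \tfrac{d}{2}\rk^T(v)$ for every $v$: the infimum over $G$ preserves the inequality, and $\slicerk^T$ is integer-valued so the infimum agrees with the minimum $\slicerk(v)$. Via the LP characterizations from the preceding theorem and the analogous statement for $\srk^T$, this reduces to the integrality-gap claim that the minimum $\{0,1\}$-solution of $\LP(S)$ (for $S = \supp(v)$) is at most $\tfrac{d}{2}$ times the minimum real-valued solution.

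So fix an optimal real solution $x(i,j)$ of value $r = \rk^T(v)$. The natural first move is to threshold at $2/d$: set $x'(i,j) = \mathbb{1}[x(i,j) \geq 2/d]$. Since $\mathbb{1}[t \geq 2/d] \leq \tfrac{d}{2}t$ for $t \geq 0$, the cost satisfies $\sum_{i,j} x'(i,j) \leq \tfrac{d}{2}\sum_{i,j} x(i,j) = \tfrac{d}{2} r$, matching the target. Any $s \in S$ with some $x(i,s_i) \geq 2/d$ is automatically covered by $x'$. For the remaining $s$, every $x(i,s_i) < 2/d$ while $\sum_i x(i,s_i) \geq 1$, which forces strictly more than $d/2$ of the coordinates $x(i,s_i)$ to be positive; this ``spread-support'' structure is the feature I would exploit to deal with the uncovered constraints.

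For the uncovered part, I would argue by induction on $|S|$. Let $S' \subseteq S$ be the uncovered constraints and $r_{\mathrm{high}} = \sum_{x(i,j) \geq 2/d} x(i,j)$. The restriction of $x$ to entries $< 2/d$ is a feasible LP solution for the sub-support $S'$ of cost at most $r - r_{\mathrm{high}}$, so by induction there is an integer cover of $S'$ of cost at most $\tfrac{d}{2}(r - r_{\mathrm{high}})$; together with the contribution $\tfrac{d}{2}r_{\mathrm{high}}$ from the threshold this totals $\tfrac{d}{2}r$, completing the step when at least one $s$ was covered by the threshold. The base case, when the threshold covers nothing at all, is handled separately: rescaling $\tilde x = \tfrac{d}{2} x$ gives $\tilde x(i,j) \in [0,1]$ on all relevant entries and $\sum_i \tilde x(i,s_i) \geq d/2$ for every $s$, a substantial slack. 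A randomized rounding on $\tilde x$, derandomized via conditional expectations (using the expected number of uncovered constraints, which is at most $|S|e^{-d/2}$ by the standard $\prod(1-a_i) \leq e^{-\sum a_i}$ inequality), should then yield an explicit $\{0,1\}$-cover of cost $\leq \tfrac{d}{2}r$.

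The main obstacle is precisely this base case: converting the $d/2$-margin of the rescaled LP into a deterministic integer cover of cost \emph{exactly} $\leq \tfrac{d}{2}r$, rather than $\tfrac{d}{2}r + O(1)$, requires care with the rounding scheme. A cleaner alternative, worth trying first, is to bypass the base case entirely by showing that among optimal LP solutions one can always be chosen so that the $2/d$-threshold covers every $s \in S$ --- likely via a primal-dual or extreme-point argument on $\LP(S)$ using the $d$-partite structure of the covering constraints. Either way, once the $T$-level bound is in hand, the statement follows by the reduction in the first paragraph.
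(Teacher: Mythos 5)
Your first-paragraph reduction (pass to $T$-level via $\rk^G_\alpha=\inf_g\rk^T_\alpha(g\cdot v)$ and $\slicerk=\min_g\slicerk^T(g\cdot v)$, then argue an LP integrality-gap bound) is exactly the reduction the paper makes; that part is fine. The genuine gap is in the rounding. Thresholding at the single value $2/d$ is inherently too coarse: the covering constraints are $\sum_{i=1}^d x(i,s_i)\ge 1$, so a threshold vector $(\gamma_1,\dots,\gamma_d)$ is guaranteed to cover \emph{every} $s\in S$ precisely when $\sum_i\gamma_i\le 1$ (then some $x(i,s_i)\ge\gamma_i$). Your uniform choice $\gamma_i=2/d$ has $\sum_i\gamma_i=2$, so whole constraints can escape the threshold — and, as you notice, the optimal LP solution can have \emph{all} nonzero entries strictly below $2/d$ (this already happens in the paper's $3\times3\times3$ cap-set example, where $x(i,j)\in\{0,\tfrac14,\tfrac12\}<\tfrac23$). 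In that regime your induction makes no progress and you fall into the "base case" you flagged. The fallbacks you sketch don't close it: randomized rounding at $\tilde x=\tfrac{d}{2}x$ gives \emph{expected} cost $\tfrac{d}{2}r$ together with a positive expected number of uncovered constraints, and conditional expectations can only deterministically control one of the two quantities at a time, not guarantee cost $\le\tfrac{d}{2}r$ \emph{and} full coverage simultaneously; and your "cleaner alternative" (move to an optimal LP vertex for which thresholding at $2/d$ covers) is false in general, again by the cap-set example.

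What the paper does instead is to keep the coverage constraint $\sum_i\gamma_i=1$ at all times and push the cost analysis into an averaging argument over a rotating family of threshold vectors. Setting $s_i=\tfrac{2i}{d(d-1)}$ (so $\sum_{i=0}^{d-1}s_i=1$) and letting $\gamma(t)$ traverse the cyclic shifts of $(s_0,\dots,s_{d-1})$, one computes that the time-average of $f_i(\gamma_i(t))=|\{j:x(i,j)\ge\gamma_i(t)\}|$ is the average of $f_i$ over $[0,2/d]$, hence at most $\tfrac{d}{2}\sum_j x(i,j)$. Summing over $i$ and taking the $t$ that achieves the average produces a single threshold vector summing to $1$ — hence a valid $\{0,1\}$-cover — with cost $\le\tfrac{d}{2}\rk^T(v)$. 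In short: you let the threshold \emph{vector} vary (subject to $\sum\gamma_i=1$) and average the cost; you do not fix $\gamma$ and hope the LP solution avoids the bad cases. If you want to salvage your write-up, replace the fixed $2/d$ threshold and all three fallback ideas with this averaging over cyclic shifts of $(s_0,\dots,s_{d-1})$.
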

\begin{proof}
Suppose that $x(i,j)$ is an optimal solution to the linear program. Note that $0\leq x(i,j)\leq 1$ for all $i,j$.
 We define functions $f_1,f_2,\dots,f_d:[0,1]\to \R$
by 
$$f_i(\alpha)=|\{j\mid x(i,j)\geq \alpha\}|.
$$
 We have $\int_{0}^1 f_i(\alpha)\,d\alpha=\sum_j x(i,j)$. 
In particular, $\int_0^1 (f_1(\alpha)+\cdots+f_d(\alpha))\,d\alpha=\sum_{i,j}x(i,j)$.
Let $s_i= \frac{2i}{d(d-1)}$ for $i=0,1,2,\dots,d-1$. Note that $s_0+s_1+\cdots+s_{d-1}=1$.
We define a closed piecewise linear curve $\gamma=(\gamma_1,\dots,\gamma_d):[0,d]\to \R^d$ with $\gamma(d)=\gamma(0)=[s_0,s_1,\dots,s_{d-1}]$, $\gamma(1)=[s_1,s_2,\dots,s_{d-1},s_0]$,
\ldots, $\gamma(d-1)=[s_{d-1},s_0,\dots,s_{d-2}]$ such that $\gamma$ is linear on each of the intervals $[i,i+1]$, $i=0,1,\dots,d-1$.
On the intervals $[0,1],[1,2],\dots,[d-1,d]$, $\gamma_i(t)$ goes through the intervals $[s_0,s_1]$,$[s_1,s_2]$,$\dots$, $[s_{d-2},s_{d-1}]$,$[s_{d-1},s_0]$
in some order. So $\frac{1}{d}\int_0^d f_i(\gamma_i(t))\,dt$ is the average of the averages of $f_i$ of each of these $d$ intervals. 
This is equal to the average value of $f_i(t)$ on the interval $[0,s_{d-1}]=[0,\frac{2}{d}]$:
$$
 {\textstyle\frac{1}{d}}\int_0^df_i(\gamma_i(t))\,dt={\textstyle\frac{d}{2}}\int_0^{\frac{2}{d}} f_i(t)\,dt\leq {\textstyle\frac{d}{2}}\int_0^1f_i(t)\,dt
={\textstyle \frac{d}{2}}\sum_{j=1}^{n_i}x(i,j).
$$
It follows that
$$
 {\textstyle\frac{1}{d}}\int_0^d\Big(\sum_{i=1}^df_i(\gamma_i(t))\Big)\,dt\leq {\textstyle \frac{d}{2}}\sum_{i=1}^d \sum_{j=1}^{n_i}x(i,j)={\textstyle\frac{d}{2}}\rk^T(v).
$$
Since the minimal value of $\sum_{i=1}^d f_i(\gamma_i(t))$ is at most the average, there exists a $t\in [0,d]$ such that $\sum_{i=1}^d f_i(\gamma_i(t))\leq \frac{d}{2}\rk^T(v)$.
Now define $x'(i,j)=1$ if $x(i,j)\geq \gamma_i(t)$ and $x'(i,j)=0$ if $x(i,j)<\gamma_i(t)$. If $s=(s_1,s_2,\dots,s_d)\in\supp(v)$,
then $\sum_{i=1}^d x(i,s_i)\geq 1$. Since $\sum_{i=1}^d \gamma_i(t)=1$, we have $x(i,s_i)\geq \gamma_i(t)$ for some $i$
and $\sum_{i=1}^d x'(i,s_i)\geq 1$. We conclude that
$$
\slicerk^T(v)\leq \sum_{i=1}^n\sum_{j=1}^{n_i}x'(i,j)=\sum_{i=1}^d f_i(\gamma_i(t))\leq{\textstyle \frac{d}{2}} \rk^T(v).
$$
Finally, we get
$$
\slicerk(v)=\inf_{g\in G} \slicerk^T(g\cdot v)\leq{\textstyle \frac{d}{2}} \inf_{g\in G} \rk^T(g\cdot v)={\textstyle \frac{d}{2}}\rk^G(v).
$$
\end{proof}

\subsection{The dual program and the $T$-stable rank}

\begin{definition}
For a support set $S$, the dual program $\LP^{\vee}_\alpha(S)$ is to maximize $\sum_{s\in S}y(s)$
under the constraints
\begin{enumerate}
\item $y(s)\geq 0$ for all $s\in S$;
\item for all $i,j$ we have
 $$\sum_{\scriptstyle s\in S\atop \scriptstyle s_i=j} y(s)\leq \alpha_i.
 $$
\end{enumerate}
\end{definition}
If $x$ and $y$ are optimal solutions for $\LP_\alpha(S)$ and $\LP_\alpha^\vee(S)$ respectively, then we have
$$\sum_{s\in S}y(s)=\sum_{i=1}^d\alpha_i\sum_{j=1}^{n_i}x(i,j)=\rk^T_\alpha(v)$$ and 
\begin{enumerate}
\item for all $i, j$, we have 
$$\sum_{\scriptstyle s\in S\atop \scriptstyle s_i=j}y(s)=\alpha_i\mbox{ or }x(i,j)=0;$$
\item for all $s\in S$ we have $\sum_{i=1}^d x(i,s_i)=1$ or $y(s)=0$.
\end{enumerate}

\subsection{The super-multiplicative property of the $T$-stable rank}

If $v\in V=V_1\otimes V_2\otimes \cdots\otimes V_d$ and $w\in W_1\otimes W_2\otimes \cdots\otimes W_d$
then we can consider the ``vertical'' tensor product $v\boxtimes w\in (V_1\otimes W_1)\otimes \cdots (V_d\otimes W_d)$.
\begin{proposition}
We have $\rk^T_{\alpha\beta}(v\boxtimes w)\geq \rk^T_\alpha(v)\rk^T_\beta(w)$, where $\alpha=(\alpha_1,\dots,\alpha_d)$,
$\beta=(\beta_1,\dots,\beta_d)$ and $\alpha\beta=(\alpha_1\beta_1,\dots,\alpha_d\beta_d)$.
\end{proposition}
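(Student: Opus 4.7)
The plan is to use the dual linear program $\LP^{\vee}_\alpha(S)$ and build a feasible dual solution for $\LP^{\vee}_{\alpha\beta}(\supp(v\boxtimes w))$ as a product of optimal dual solutions for $\LP^{\vee}_{\alpha}(\supp v)$ and $\LP^{\vee}_{\beta}(\supp w)$. Combined with weak duality for the primal $\LP_{\alpha\beta}$, this immediately gives the desired lower bound.

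First I would identify the support of the vertical tensor product. If we use the basis of $V_i\otimes W_i$ indexed by pairs $(s_i,t_i)$, then the $((s_1,t_1),\dots,(s_d,t_d))$-coefficient of $v\boxtimes w$ equals $v_{s_1,\dots,s_d}\cdot w_{t_1,\dots,t_d}$. Hence, writing $S=\supp(v)$ and $T=\supp(w)$, the support of $v\boxtimes w$ is exactly $S\times T\subseteq \prod_{i=1}^d(\underline{n}_i\times \underline{m}_i)$, where the index $(s,t)\in S\times T$ corresponds to the multi-index $((s_1,t_1),\dots,(s_d,t_d))$.

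Next, let $y:S\to\R_{\geq 0}$ be an optimal solution of $\LP^{\vee}_{\alpha}(S)$ and $z:T\to\R_{\geq 0}$ an optimal solution of $\LP^{\vee}_{\beta}(T)$. By the strong-duality statement noted just before this proposition, $\sum_{s\in S}y(s)=\rk^T_{\alpha}(v)$ and $\sum_{t\in T}z(t)=\rk^T_{\beta}(w)$. Define $u:S\times T\to\R_{\geq 0}$ by $u(s,t)=y(s)z(t)$. For any index $i$ and any pair $(j,k)\in \underline{n}_i\times \underline{m}_i$,
\begin{equation*}
\sum_{\substack{(s,t)\in S\times T\\ s_i=j,\, t_i=k}}u(s,t)=\Bigl(\sum_{\substack{s\in S\\ s_i=j}}y(s)\Bigr)\Bigl(\sum_{\substack{t\in T\\ t_i=k}}z(t)\Bigr)\leq \alpha_i\beta_i,
\end{equation*}
so $u$ is feasible for $\LP^{\vee}_{\alpha\beta}(S\times T)$ with objective value $\sum_{(s,t)}y(s)z(t)=\rk^T_{\alpha}(v)\cdot\rk^T_{\beta}(w)$. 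Weak duality applied to $\LP_{\alpha\beta}(S\times T)$ then gives
\begin{equation*}
\rk^T_{\alpha\beta}(v\boxtimes w)=\LP_{\alpha\beta}(S\times T)\geq \LP^{\vee}_{\alpha\beta}(S\times T)\geq \rk^T_{\alpha}(v)\cdot\rk^T_{\beta}(w),
\end{equation*}
which is the claim.

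The only thing that requires care is the identification of $\supp(v\boxtimes w)$ with $S\times T$; this is routine but must be made with respect to the bases that identify $\GL(V_i\otimes W_i)$ with a matrix group, since the $T$-stable rank depends on the chosen torus. Once this is set up correctly, the rest is pure LP duality and the product construction presents no obstacle. Note that the argument is field-independent and does not require the characteristic-$0$ hypothesis needed for the $G$-stable version in Theorem~\ref{theo:supermultiplicative}; this is consistent with the conjecture stated in the introduction that the super-multiplicativity of $\rk^G_{\alpha\beta}$ should hold over any perfect field.
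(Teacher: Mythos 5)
Your proof is correct and follows essentially the same route as the paper: identify $\supp(v\boxtimes w)=S\times T$, form the product $u(s,t)=y(s)z(t)$ of optimal dual solutions, verify feasibility for $\LP^\vee_{\alpha\beta}(S\times T)$ because the constraint sum factorizes, and invoke LP duality. (Minor aside: you have even correctly written the bound $\alpha_i\beta_i$ in the product constraint, whereas the paper's proof contains a small typo there; this does not change the argument.)
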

\begin{proof}
Let $S=\supp(v)$, $S'=\supp(w)$,  $y(s),s\in S$ be an optimal solution for the$\LP^\vee_\alpha(v)$ and
 $y'(s),s\in S'$ be an optimal solution for $\LP^\vee_\beta(w)$.
The tensor $v\boxtimes w$ has support $S\times S'$. 
For the dual program for $v\boxtimes w$  we have to maximize $\sum_{s\in S,s'\in S'}Y(s,s')$
under the constraints $Y(s,s')\geq 0$ for all $s\in S,s'\in S'$ and 
$$\sum_{\scriptstyle s\in S,s'\in S'\atop \scriptstyle s_i=j,s'_i=j'}Y(s,s')\leq \alpha_{j}\beta_{j'}$$
for all $i,j,j'$.
One solution for this linear program is $Y(s,s')=y(s)y'(s')$. We get
$$
\rk_{\alpha\beta}^T(v\boxtimes w)\geq \sum_{s\in S}\sum_{s'\in S'} Y(s,s')= \sum_{s\in S}y(s)\sum_{s'\in S'} y(s')=\rk^T_\alpha(v)\rk^T_\beta(w).
$$

\end{proof}

\section{$G$-stable rank over $\C$}
\subsection{Kempf-Ness theory}
We recall some of the main results from Kempf-Ness theory \cite{KempfNess79,Woodward10}. Suppose that $G$ is an complex reductive algebraic group with a maximal compact subgroup $C$
and $V$ is a representation of $G$. We fix a Hermitian inner product $\langle\cdot,\cdot\rangle$ on $V$ that is invariant under $C$, i.e., $\langle g\cdot v,g\cdot w\rangle=\langle v,w\rangle$ for all $v,w\in V$ and $g\in C$.
 Let ${\mathfrak c}$ and ${\mathfrak g}$ be the Lie algebras of $C$ and $G$ respectively, and let ${\mathfrak c}^\star$ be the dual space of ${\mathfrak c}$. We have ${\mathfrak g}={\mathfrak c}\oplus i {\mathfrak c}$.
For $v\in V$, we define a morphism
$\psi_v:G\to \R$ by $g\mapsto \|g\cdot v\|^2=\langle g\cdot v,g\cdot v\rangle$. The differential
$(d\psi_v)_I:\mathfrak g\to \R$  of $\psi_v$ at the identity $I\in G$ is given by 
$$(d\psi_v)_I:\xi\mapsto \langle \xi v,v\rangle+\langle v,\xi v\rangle\in \R$$
Because $\|g\cdot v\|^2$ is constant on $C$, $(d\psi_v)_I$ vanishes on ${\mathfrak c}$.  So $\langle v,\xi v\rangle=-\langle \xi v,v\rangle$ for $\xi\in {\mathfrak c}$.
If $\xi\in {\mathfrak c}$ then we have
$(d\psi_v)_I(i\xi)=\langle i\xi v,v\rangle+\langle v,i\xi v\rangle=i\langle \xi v,v\rangle-i\langle v,\xi v\rangle=2i\langle \xi v,v\rangle$. For the following result, see \cite[Corollary 5.2.5.]{Woodward10}.
\begin{theorem}[Kempf-Ness]
\label{theo:KempfNess}
An orbit $G\cdot v$ is closed if and only there exists $w\in G\cdot v$ with $(d\psi_w)_I=0$.
\end{theorem}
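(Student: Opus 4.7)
The proof splits into two directions. For ($\Rightarrow$), if $G \cdot v$ is closed then $\|\cdot\|^2$ restricted to $G \cdot v$ is proper (sublevel sets are closed in $V$ and bounded, hence compact), so $\psi_v$ attains its infimum on $G$ at some $g_0$. Setting $w = g_0 \cdot v$, the identity element $I$ is a global minimum of $\psi_w$, hence $(d\psi_w)_I = 0$.

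For ($\Leftarrow$), suppose $(d\psi_w)_I = 0$. The plan is to first deduce that $w$ minimizes $\|\cdot\|^2$ on its orbit, and then to conclude closedness from this via a properness argument. Using $\mathfrak{g} = \mathfrak{c} \oplus i\mathfrak{c}$, the $C$-invariance of the Hermitian form already forces $(d\psi_w)_I$ to vanish on $\mathfrak{c}$, so the hypothesis is equivalent to the vanishing on $i\mathfrak{c}$. For any $\xi \in i\mathfrak{c}$, since $\mathfrak{c}$ acts by skew-Hermitian operators, $\xi$ itself acts as a Hermitian operator on $V$; diagonalizing $\xi = \sum_j \lambda_j P_j$ with $\lambda_j \in \R$ and orthogonal projections $P_j$, the function
$$
t \mapsto \|\exp(t\xi) \cdot w\|^2 = \sum_j e^{2 t \lambda_j} \|P_j w\|^2
$$
is a sum of positive exponentials in $t$, hence convex, so the vanishing derivative at $t = 0$ places its global minimum there. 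Combining this with the Cartan decomposition $G = C \cdot \exp(i\mathfrak{c})$ and the $C$-invariance of $\|\cdot\|$ yields $\|g \cdot w\|^2 \geq \|w\|^2$ for every $g \in G$.

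To conclude closedness, I plan to upgrade the convexity above to the statement that $\xi \mapsto \|\exp(\xi) \cdot w\|^2$ is proper on $i\mathfrak{c}$ modulo the infinitesimal stabilizer $\mathfrak{s} = \{\xi \in i\mathfrak{c} : \xi \cdot w = 0\}$. For $\xi \notin \mathfrak{s}$, the identity $\sum_j \lambda_j \|P_j w\|^2 = 0$ (coming from the vanishing derivative at $t = 0$) combined with the existence of some $\lambda_j \neq 0$ having $P_j w \neq 0$ forces both positive and negative eigenvalues to contribute nontrivially, so the sum of exponentials blows up as $|t| \to \infty$. Consequently, if a sequence $g_n \cdot w \to u$ in $V$, Cartan-decomposing $g_n = c_n \exp(\xi_n)$ with $c_n \in C$ and $\xi_n \in i\mathfrak{c}$ produces a bounded sequence $\|\exp(\xi_n) \cdot w\|^2$, so the $\xi_n$ stay in a bounded neighborhood of $\mathfrak{s}$; compactness of $C$ together with extraction of a convergent subsequence of $\xi_n$ modulo $\mathfrak{s}$ then yields $u \in G \cdot w$, proving the orbit is closed. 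The main obstacle is the precise properness statement --- uniformly quantifying the transverse blow-up and upgrading convergence modulo the stabilizer to honest convergence of orbit points --- but this is exactly the classical Kempf--Ness coercivity argument.
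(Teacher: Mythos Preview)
The paper does not prove this theorem; it is quoted as a black box from \cite[Corollary~5.2.5]{Woodward10}, so there is no argument in the paper to compare yours against.

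Your ($\Rightarrow$) direction is correct, as is the first half of ($\Leftarrow$): the convexity of each one-variable map $t\mapsto\|\exp(t\xi)w\|^2$ together with the Cartan decomposition $G=C\exp(i\mathfrak c)$ does show that $w$ minimises the norm on its orbit.

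The closedness step is where the substance lies, and your sketch leaves a real gap. You have established convexity and blow-up only along rays through the origin in $i\mathfrak c$; you have not shown that $\xi\mapsto\|\exp(\xi)w\|^2$ is convex on all of $i\mathfrak c$ with its linear structure, and without that, radial blow-up does not by itself yield coercivity. More seriously, this function is \emph{not} invariant under translation by $\mathfrak s$, since $\exp(\xi+\eta)\neq\exp(\xi)\exp(\eta)$ when $[\xi,\eta]\neq 0$; so ``the $\xi_n$ stay in a bounded neighbourhood of $\mathfrak s$'' is not the right formulation and does not obviously control the orbit points. The actual Kempf--Ness argument works on the symmetric space $C\backslash G$ rather than on $i\mathfrak c$: there the norm-squared function is convex along \emph{every} geodesic, and the stabiliser $G_w$ acts by isometries with the function genuinely invariant, so both the coercivity and the passage to the quotient are clean. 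An alternative route couples your one-variable convexity with the Hilbert--Mumford criterion: if the orbit were not closed, some one-parameter subgroup $\lambda$ would limit to a point outside the orbit; restricting to $t\in\R_{>0}$ and conjugating into a $\theta$-stable maximal torus reduces to a single ray in $i\mathfrak c$, where your sum-of-exponentials computation then forces $\lambda$ to fix $w$, a contradiction. Your deferral to ``the classical Kempf--Ness coercivity argument'' glosses over precisely this distinction between $i\mathfrak c$ and $C\backslash G$.
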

Let $V=V_1\otimes V_2\otimes \cdots \otimes V_d$ with $V_i=\C^{n_i}$. For $v\in V$, let $\Phi_i(v)\in (V_1\otimes \cdots\otimes \widehat{V_i}\otimes \cdots \otimes V_d)^\star \to V_i$ be the $i$-th flattening of $v$.

\subsection{A formula for the $G$-stable rank over $\C$}
We will use Kempf-Ness theory  to prove the following theorem:
\begin{theorem}\label{theo:Cstablerank}
For $\alpha\in \R_{>0}$ we have
$$
\rk_{\alpha}^G(v)=\sup_{g\in G} \min_i \frac{\alpha_i\|g\cdot v\|^2}{\|\Phi_i(g\cdot v)\|^2_\sigma}
$$
\end{theorem}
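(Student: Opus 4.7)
The plan is to establish both inequalities of the claimed equality separately by combining Proposition~\ref{prop:stable} with Kempf-Ness theory (Theorem~\ref{theo:KempfNess}). Throughout, let $F_\alpha(v):=\min_i \alpha_i\|v\|^2/\|\Phi_i(v)\|_\sigma^2$ and $M_i(v):=\Phi_i(v)\Phi_i(v)^\ast\in\End(V_i)$, which is positive Hermitian with $\Trace M_i(v)=\|v\|^2$ and $\|M_i(v)\|_\sigma=\|\Phi_i(v)\|_\sigma^2$. Note that $F_\alpha$ is scale-invariant and continuous on $V\setminus\{0\}$, and $\rk^G_\alpha$ is $G$-invariant.

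For the direction $\sup_{g\in G}F_\alpha(g\cdot v)\leq\rk^G_\alpha(v)$, $G$-invariance reduces the claim to $F_\alpha(v)\leq\rk^G_\alpha(v)$ for all $v$. By Theorem~\ref{theo:1PSG}, pick any polynomial $1$-parameter subgroup $\lambda(t)\in G(K[t])$ with $\vv(\lambda(t)\cdot v)>0$, and choose orthonormal bases diagonalizing $\lambda_i(t)=\operatorname{diag}(t^{x(i,1)},\ldots,t^{x(i,n_i)})$ with $x(i,j)\geq 0$. Setting $A_i:=\operatorname{diag}(x(i,1),\ldots,x(i,n_i))$, a direct calculation of the diagonal of $M_i(v)$ gives
$$\sum_{i=1}^d \Trace(A_i M_i(v))=\sum_{s\in\supp(v)}|v_s|^2\sum_{i=1}^d x(i,s_i)\geq \vv(\lambda(t)\cdot v)\cdot\|v\|^2,$$
while $A_i,M_i(v)\succeq 0$ yields $\Trace(A_iM_i(v))\leq\Trace(A_i)\|\Phi_i(v)\|_\sigma^2$. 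Combining these with the hypothesis $\|\Phi_i(v)\|_\sigma^2\leq\alpha_i\|v\|^2/F_\alpha(v)$ yields $\vv(\lambda(t)\cdot v)\,F_\alpha(v)\leq\sum_i\alpha_i\Trace(A_i)$, i.e.\ $F_\alpha(v)\leq\mu_\alpha(\lambda(t),v)$; taking the infimum over $\lambda(t)$ gives the bound.

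For the reverse, fix rational $p/q<\rk^G_\alpha(v)$ with $q\alpha\in\Z^d$. By Proposition~\ref{prop:stable} the auxiliary tensor $w=(v^{\otimes p}\otimes 1,u_1,\ldots,u_d)$, with $u_i$ the identity in $V_i^{n_i}\cong\operatorname{Mat}_{n_i\times n_i}$, is $G$-semi-stable, so $\inf_{g\in G}\|g\cdot w\|^2>0$. Expanding
$$\|g\cdot w\|^2=\|g\cdot v\|^{2p}\prod_{i=1}^d|\det g_i|^{-2q\alpha_i}+\sum_{i=1}^d\Trace(g_i^\ast g_i),$$
a minimizing sequence $g^{(k)}$ has bounded $g^{(k)}_i$ (from the second summand), hence after extracting a subsequence $g^{(k)}\cdot w\to w_\ast\in\overline{G\cdot w}$ with $\|w_\ast\|^2=\inf>0$. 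Since the image of the Veronese map $v\mapsto v^{\otimes p}$ is closed in $V^{\otimes p}$, the first component of $w_\ast$ has the form $v_\ast^{\otimes p}\otimes C_\ast$, for some $v_\ast\in V$ lying in the projective closure of $G\cdot v$ and some $C_\ast\in\C$; denote the second components by $y_{\ast,i}$. The orbit $G\cdot w_\ast$ is the unique closed orbit in $\overline{G\cdot w}$, so Theorem~\ref{theo:KempfNess} gives $(d\psi_{w_\ast})_I=0$; expanding this componentwise in $\End(V_i)$ yields the matrix identity
$$y_{\ast,i}y_{\ast,i}^\ast=|C_\ast|^2\|v_\ast\|^{2(p-1)}\bigl(q\alpha_i\|v_\ast\|^2 I-pM_i(v_\ast)\bigr).$$
If $v_\ast=0$ or $C_\ast=0$, the right-hand side vanishes, forcing $y_{\ast,i}=0$ for all $i$ and hence $w_\ast=0$, contradicting semi-stability; so $v_\ast,C_\ast\neq 0$, and positive semi-definiteness of the left-hand side gives $pM_i(v_\ast)\preceq q\alpha_i\|v_\ast\|^2 I$, i.e., $F_\alpha(v_\ast)\geq p/q$. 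Scale-invariance and continuity of $F_\alpha$, together with $v_\ast$ lying in the projective closure of $G\cdot v$, produce $g\in G$ with $F_\alpha(g\cdot v)$ arbitrarily close to $p/q$; letting $p/q\uparrow\rk^G_\alpha(v)$ completes the proof.

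The main obstacle is the analysis of the limit $w_\ast$: one must carefully verify that the minimizing sequence stays in a compact region (automatic from the $u$-parts), that the first component retains its rank-one symmetric form $v_\ast^{\otimes p}\otimes C_\ast$ in the Hilbert limit (via closedness of the Veronese image), and that neither $v_\ast$ nor $C_\ast$ degenerates to $0$ (which is forced by the gradient identity combined with $\|w_\ast\|^2>0$). Once these structural facts are in place, the Kempf-Ness computation is essentially linear-algebraic: the action of $\xi_i\in\End(V_i)$ on $V^{\otimes p}\otimes\det^{-q\alpha}$ produces precisely the combination $p\|v\|^{2(p-1)}M_i(v)-q\alpha_i\|v\|^{2p}I$ inside the trace pairing, and the desired spectral inequality falls out of the positivity of $y_{\ast,i}y_{\ast,i}^\ast$.
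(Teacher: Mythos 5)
Your two directions have quite different status; let me take them in turn.

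\textbf{The reverse inequality ($\sup_g F_\alpha(g\cdot v)\geq\rk^G_\alpha(v)$).} This half is essentially sound and mirrors the paper's strategy: use Proposition~\ref{prop:stable} to turn the rank inequality into $G$-semi-stability of the auxiliary tensor $w$, extract from a norm-minimizing sequence a critical point $w_\ast$ in the closed orbit, and read off the spectral inequality $pM_i(v_\ast)\preceq q\alpha_i\|v_\ast\|^2 I$ from the Kempf--Ness gradient identity (which is exactly what Lemma~\ref{lem:critical} computes). The transfer from $v_\ast$ back to $G\cdot v$ via continuity and scale invariance of $F_\alpha$ is valid. Two small things you should tidy: (i) you write ``fix rational $p/q<\rk^G_\alpha(v)$ with $q\alpha\in\Z^d$,'' but for irrational $\alpha$ no such $q$ exists; the paper handles this by replacing $\alpha$ with rational $\beta<\alpha$ and then passing to the limit $\beta\uparrow\alpha$, and you must do the same. (ii) The claim that $G\cdot w_\ast$ is \emph{the} unique closed orbit is extraneous; all that is used is that $w_\ast$ realizes $\inf_g\|g\cdot w\|^2$, which already forces $(d\psi_{w_\ast})_I=0$.

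\textbf{The forward inequality ($\sup_g F_\alpha(g\cdot v)\leq\rk^G_\alpha(v)$).} Here there is a genuine gap. After the $G$-invariance reduction to showing $F_\alpha(v)\leq\mu_\alpha(\lambda(t),v)$, you ``choose orthonormal bases diagonalizing $\lambda_i(t)$.'' But a one-parameter subgroup of $\GL_{n_i}(\C)$ need not be diagonalizable in an orthonormal basis; its eigenspaces are generally not orthogonal (e.g.\ $\lambda(t)=\left(\begin{smallmatrix}1 & t-1\\ 0 & t\end{smallmatrix}\right)$ has eigenvectors $(1,0)$ and $(1,1)$). Your trace computation $\sum_i\Trace(A_iM_i(v))=\sum_s|v_s|^2\sum_ix(i,s_i)$ is only valid when the $\lambda_i$-eigenbasis coincides with an orthonormal basis, because $M_i(v)=\Phi_i(v)\Phi_i^\star(v)$ is defined via the Hermitian structure. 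Consequently your argument establishes $F_\alpha(v)\leq\rk^T_\alpha(v)$ (the $T$-stable rank with respect to a fixed \emph{orthonormal} torus), not $F_\alpha(v)\leq\rk^G_\alpha(v)=\inf_{g\in G}\rk^T_\alpha(g\cdot v)$, and the $G$-invariance trick does not rescue this: for each $g$ you only get $F_\alpha(g\cdot v)\leq\rk^T_\alpha(g\cdot v)$, whereas you need the right side to be $\rk^G_\alpha(v)$. Closing this gap is precisely what the Kempf--Ness step in the paper accomplishes for this direction too: from $F_\alpha(v)\geq r$ one manufactures a $u_i$ making the gradient vanish, concludes the orbit is closed, and invokes Proposition~\ref{prop:stable}. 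Either import that argument, or supply a separate proof that orthonormally-diagonalizable one-parameter subgroups already compute $\rk^G_\alpha$; the latter is a nontrivial moment-map fact, not something that can just be assumed.
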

For the proof of the theorem, we need the following lemma:
\begin{lemma}\label{lem:critical}
Suppose that $\beta\in \Q_{>0}^d$, $r=\frac{p}{q}$ with $p,q$ positive integers, $q\beta\in \Z^d$ and
$v\in V=V_1\otimes V_2\otimes \cdots\otimes V_d$. As in Proposition~\ref{prop:stable}, let
$$
\textstyle W=\left(V^{\otimes p}\otimes \det^{-q\beta} \right)\oplus V_1^{n_1}\oplus V_2^{n_2}\oplus \cdots\oplus V_d^{n_d}.
$$
and $w=(v^{\otimes p}\otimes 1,u_1,\dots,u_d)$. Define $\psi_w:G\to W$ by $\psi_w(g)=g\cdot w$.
Then we have  $(d\psi_w)_I=0$ if and only if
$$
p\|v\|^{2p-2}\Phi_i(v)\Phi_i^\star(v)-q\beta_i\|v\|^{2p}I_{n_i}+u_iu_i^\star=0
$$
for all $i$.
\end{lemma}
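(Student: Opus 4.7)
The plan is to compute the differential $(d\psi_w)_I(\xi)$ for a Lie algebra element $\xi=(\xi_1,\ldots,\xi_d)\in\mathfrak{g}=\bigoplus_i\operatorname{End}(V_i)$ by applying the Leibniz rule on each summand of $W$, and then read off the matrices $A_i$ from a trace pairing.

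First I would differentiate the $G$-action summand by summand. On $V^{\otimes p}\otimes\det^{-q\beta}$, the action of $\xi$ on $v^{\otimes p}\otimes 1$ is
$$\xi\cdot\bigl(v^{\otimes p}\otimes 1\bigr)=\Bigl(\sum_{k=1}^{p}v\otimes\cdots\otimes\xi v\otimes\cdots\otimes v\Bigr)\otimes 1\;-\;\Bigl(\sum_{i=1}^{d}q\beta_i\operatorname{Tr}(\xi_i)\Bigr)v^{\otimes p}\otimes 1,$$
where the scalar term comes from differentiating the character $\det^{-q\beta}$. On each $V_j^{n_j}$ (viewed as an $n_j\times n_j$ matrix) only $\xi_j$ acts, and the result is $\xi_j u_j$.

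Next I would compute $\langle\xi\cdot w,w\rangle$. Orthogonality between distinct factors in $v^{\otimes p}$ collapses the first contribution to $p\|v\|^{2p-2}\langle \xi v,v\rangle$, and the key coordinate identity
$$\langle(\xi_i)_{(i)}v,v\rangle=\operatorname{Tr}\bigl(\xi_i\,\Phi_i(v)\Phi_i^\star(v)\bigr),$$
obtained by expanding $v$ in a tensor basis and recognizing $\Phi_i(v)\Phi_i^\star(v)$ as the Gram matrix of the $i$-th fibers of $v$, rewrites this as $p\|v\|^{2p-2}\sum_i\operatorname{Tr}(\xi_i\Phi_i(v)\Phi_i^\star(v))$. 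The character contribution is $-\|v\|^{2p}\sum_i q\beta_i\operatorname{Tr}(\xi_i)$, and the auxiliary summands give $\sum_i\operatorname{Tr}(\xi_i u_iu_i^\star)$. Collecting,
$$\langle\xi\cdot w,w\rangle=\sum_{i=1}^{d}\operatorname{Tr}(\xi_i A_i),\qquad A_i:=p\|v\|^{2p-2}\Phi_i(v)\Phi_i^\star(v)-q\beta_i\|v\|^{2p}I_{n_i}+u_iu_i^\star.$$

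Finally, each $A_i$ is manifestly Hermitian, and $(d\psi_w)_I(\xi)=\langle\xi w,w\rangle+\langle w,\xi w\rangle=2\operatorname{Re}\sum_i\operatorname{Tr}(\xi_i A_i)$. Using the decomposition $\mathfrak{g}=\mathfrak{c}\oplus i\mathfrak{c}$ and letting $\xi_i$ range over Hermitian matrices (one index at a time, others zero), the vanishing of $(d\psi_w)_I$ is equivalent to $\operatorname{Tr}(\xi_i A_i)=0$ for every Hermitian $\xi_i$, which by nondegeneracy of the trace pairing on Hermitian matrices forces $A_i=0$ for each $i$. The converse direction is immediate from the same formula. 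The only genuinely nontrivial ingredient is the flattening identity above, and the bookkeeping subtlety that one must take care not to lose is the character contribution $-\sum_i q\beta_i\operatorname{Tr}(\xi_i)$, which is exactly what produces the $-q\beta_i\|v\|^{2p}I_{n_i}$ middle term of $A_i$.
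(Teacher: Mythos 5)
Your proof is correct and follows essentially the same route as the paper: you compute $(d\psi_w)_I(\xi)$ by pairing the infinitesimal action of $\xi$ against $w$, use orthogonality on $V^{\otimes p}$ and the flattening identity $\langle (\xi_i)_{(i)}v,v\rangle=\operatorname{Tr}(\xi_i\Phi_i(v)\Phi_i^\star(v))$ to write the result as a trace pairing against the Hermitian matrices $A_i$, and conclude via nondegeneracy on the Hermitian part $i\mathfrak{c}$. The paper phrases the same computation as the chain rule applied to $\psi_w(g)=\|g\cdot v\|^{2p}\det^{-2q\beta}(g)+\sum_i\|g_iu_i\|^2$, but the bookkeeping and the conclusion are identical.
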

\begin{proof}
The Hermitian scalar products on $V_1,V_2,\dots,V_d$ induce Hermitian scalar products on $V_1^{n_1},\dots,V_d^{n_d}$, $V$, $V^{\otimes p}$,
$V^{\otimes p}\otimes \det^{-q\beta}$ and $W$ in a natural way. We have
$$
\textstyle\| w\|^2=\|v\|^{2p}+\sum_{i=1}^d \|u_i\|^2
$$
and
$$
\psi_w(g)=\|g\cdot w\|^2=\|g\cdot v\|^{2p}\textstyle \det^{-2q\beta}(g)+\sum_{i=1}^d \|g_iu_i\|^2.
$$
The Lie algebra of $G$ can be identified with 
$$
{\mathfrak g}=\End(V_1)\oplus \End(V_2)\oplus \cdots\oplus \End(V_d).
$$
The Lie algebra ${\mathfrak c}$ consists of all $d$-tuples $(\xi_1,\dots,\xi_d)$ of skew-Hermitian matrices,
and $i{\mathfrak c}$ consists of $d$-tuples of Hermitian matrices.
We compute the differential $(d\psi_w)_I$. Note that $\GL(V_i)$ acts on the $i$-th mode. If we view $v$ as the flattened tensor
$\Phi_i(v)$, then $g_i$ acts just by left multiplication: $\Phi_i(g_i \cdot v)=g_i\Phi_i(v)$. Let $\Trace(\cdot)$ denote the trace. 
The differential of $g_i\mapsto\|g_i\cdot v\|^2=\Trace(g_i\Phi_i(v)\Phi_i^\star(v)g_i^\star)$ at the
identity is given by $\xi_i\in \End(V_i)\mapsto \Trace(\xi_i\Phi_i(v)\Phi_i^\star(v))+\Trace(\Phi_i(v)\Phi_i^\star(v)\xi_i^\star)$.
If we restrict to Hermitian $\xi_i$, then this  is equal to $2\Trace(\xi_i\Phi_i(v)\Phi_i^\star(v))$.
The differential of $\|g\cdot v\|^2$ restricted to $i{\mathfrak c}\subseteq {\mathfrak g}$ is $(\xi_1,\dots,\xi_d)\mapsto 2\sum_{i=1}^d\Trace(\xi_i\Phi_i(v)\Phi_i^\star(v))$. The differential of $g_i\mapsto \det(g_i)$ at the identity is $\xi_i\mapsto \Trace(\xi_i)$.
Combining these results with the product rule of differentation, we get for $\xi\in i{\mathfrak c}$ that
\begin{multline*}
(d\phi_w)_I(\xi)=\sum_{i=1}^d\Big(2p\|v\|^{2p-2}\Trace(\xi_i\Phi_i(v) \Phi^\star_i(v))-2q\beta_iq\|v\|^{2p}\Trace(\xi_i)+2\Trace(\xi_iu_iu_i^\star)\Big)=\\=
\sum_{i=1}^d\langle \xi_i,\|v\|^{2p-2}\Phi_i(v)\Phi_i^\star(v)-2q\beta_i\|v\|^{2p}I_{n_i}+2u_iu_i^\star\rangle
\end{multline*}
We have $(d\phi_w)_I=0$ if and only if
$$
2p\|v\|^{2p-2}\Phi_i(v)\Phi_i^\star(v)-2q\beta_i\|v\|^{2p}I_{n_i}+2u_iu_i^\star=0
$$
for all $i$.
\end{proof}

\begin{proof}[Proof of Theorem~\ref{theo:Cstablerank}]
Let us define
$$
f_\alpha(v)=\sup_{g\in G} \min_i \frac{\alpha_i\|g\cdot v\|^2}{\|\Phi_i(g\cdot v)\|^2_\sigma}
$$
Suppose that $r\in \Q$ and $f_\alpha(v)\leq r$.  Assume that $\beta\in \Q_{>0}^d$ with $\beta_i>\alpha_i$ for all $i$.
We can write $r=p/q$ such that $p,q\in \Z$ are positive and $q\beta_i\in \Z$ for all $i$. From $f_\alpha(v)\leq r$ follows that
$$
\alpha_i\|g\cdot v\|^2I_{n_i}-r\Phi_i(g\cdot v)\Phi_i^\star(g\cdot v)
$$
is nonnegative definite for all $i$. This implies that
$$
\beta_i\|g\cdot v\|^2I_{n_i}-r\Phi_i(g\cdot v)\Phi_i^\star(g\cdot v)
$$
is positive definite for all $i$. Multiplying with $p\|g\cdot v\|^{2p-2}$ we get that
$$
p\beta_i\|g\cdot v\|^{2p}I_{n_i}-q\|g\cdot v\|^{2p-2}\Phi_i(g\cdot v)\Phi_i^\star(g\cdot v)
$$
is positive definite and equal to $u_iu_i^\star$ for some $u_i\in V_i^{n_i}$. This shows that $(d\psi_{g\cdot w})_I=0$. By Theorem~\ref{theo:KempfNess},
the $G$-orbit of $w$ is closed. By Proposition~\ref{prop:stable}, we have $\rk^G_{\beta}(v)\geq r$. Because this is true for every rational $\beta>\alpha$,
we get $\rk^G_{\alpha}(v)\geq r$. Since this is true for any $r\in \Q$ with $r\geq f_\alpha(v)$, we can conclude that $\rk^G_\alpha(v)\geq f_\alpha(v)$.

 Suppose that $\beta\in \Q_{>0}^d$ and $\beta_i<\alpha_i$ for all $i$. Let $r=\rk^G_{\beta}(v)<\rk^G_\alpha(v)$. 
 We can write $r=\frac{p}{q}$ such that $p$, $q$ are positive integers, and $q\beta\in \Z^d$.
 We can choose an invertible $u_i\in V_i^{n_i}$ for all $i$.
Now 
$$w=(v^{\otimes p}\otimes 1,u_1,u_2,\dots,u_d)\in (V^{\otimes p}\otimes {\textstyle \det^{-q\beta}})\oplus V_1^{n_1}\oplus V_2^{n_2}\oplus \cdots\oplus V_d^{n_d}
$$ is $G$-semi-stable by Proposition~\ref{prop:stable}. So there exists a nonzero $w'\in \overline{G\cdot w}$ with $(d\psi_{w'})_I=0$.
We can write $w'=((v')^{\otimes d}, u_1',\dots,u_d')$. Using Lemma~\ref{lem:critical}, we get
$$
p\|v'\|^{2p-2}\Phi_i(v')\Phi_i^\star(v')-q\beta_i\|v'\|^{2p}I_{n_i}+u_i'(u_i')^\star=0.
$$
So
$$
q\beta_i\|v'\|^{2p}I_{n_i}-p\|v'\|^{2p-2}\Phi_i(v')\Phi_i^\star(v')
$$
is nonnegative definite for all $i$. Therefore, 
$$
q\alpha_i\|v'\|^{2p}I_{n_i}-p\|v'\|^{2p-2}\Phi_i(v')\Phi_i^\star(v')
$$
is positive definite for all $i$.

Since $w'$ lies in $\overline{G\cdot w}$, there exists a $g\in G$ such that
$$
q\alpha_i\|g\cdot v\|^{2p}I_{n_i}-p\|g\cdot v\|^{2p-2}\Phi_i(g\cdot v)\Phi_i^\star(g\cdot v)
$$
is positive definite for all $i$. It follows that
$$
\|\Phi_i(g\cdot v)\|^2_\sigma=\|\Phi_i(g\cdot v)\Phi_i^\star(g\cdot v)\|_\sigma\leq \frac{q\alpha_i\|g\cdot v\|^{2p}}{p\|g\cdot v\|^{2p-2}}=\frac{\alpha_i\|g\cdot v\|^2}{r}
$$
for all $i$ and
$$
\min_{i}\frac{\alpha_i\|g\cdot v\|^2}{\|\Phi_i(g\cdot v)\|_\sigma^2}
\geq r.
$$
This shows that $f_\alpha(v)\geq r=\rk_\beta^G(v)$. Since $\beta\in \Q_{>0}^d$ was arbitrary with $\beta<\alpha$, we obtain $f_{\alpha}(v)\geq \rk_\alpha^G(v)$.
 We conclude that $f_\alpha(v)=\rk^G_{\alpha}(v)$.
\end{proof}

\subsection{The super-multiplicative property of the $G$-stable rank in characteristic 0}

\begin{theorem}\label{theo:supermultiplicative}
If $v\in V_1\otimes V_2\otimes \cdots \otimes V_d$ and $w\in W_1\otimes W_2\otimes \cdots \otimes W_d$ where $V_1,\dots,V_d,W_1,\dots,W_d$ are $\C$-vector spaces 
and $\alpha,\beta\in \R_{>0}^d$, then we have
$$
\rk_{\alpha\beta}^G(v\boxtimes w)\geq \rk_{\alpha}^G(v)\rk_{\beta}^G(w).
$$
\end{theorem}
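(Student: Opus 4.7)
The plan is to invoke the Hilbert-space formula from Theorem~\ref{theo:Cstablerank} and restrict the supremum on the left-hand side to those group elements in $\prod_i \GL(V_i\otimes W_i)$ that come from Kronecker products of elements in $\prod_i \GL(V_i)$ and $\prod_i \GL(W_i)$. This immediately produces a lower bound, and it should factor exactly into the two quantities on the right.

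More concretely, I would proceed as follows. Given $g=(g_1,\dots,g_d)\in G_V$ and $h=(h_1,\dots,h_d)\in G_W$, form $g\otimes h := (g_1\otimes h_1,\dots,g_d\otimes h_d)$ and view it inside the group acting on $V\boxtimes W$. Then $(g\otimes h)\cdot(v\boxtimes w) = (g\cdot v)\boxtimes(h\cdot w)$, and the Hilbert-space structure satisfies $\|x\boxtimes y\|^2 = \|x\|^2\|y\|^2$. The key technical ingredient is that under the natural identification of $(V_1\otimes W_1)\otimes \cdots\otimes(V_d\otimes W_d)$ with $(V_1\otimes \cdots\otimes V_d)\otimes(W_1\otimes\cdots\otimes W_d)$, the $i$-th flattening factors as a Kronecker product of matrices: $\Phi_i(v\boxtimes w) = \Phi_i(v)\otimes \Phi_i(w)$. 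Since the spectral norm is multiplicative under Kronecker product of matrices, we obtain
$$\frac{\alpha_i\beta_i\,\|(g\cdot v)\boxtimes(h\cdot w)\|^2}{\|\Phi_i((g\cdot v)\boxtimes(h\cdot w))\|_\sigma^2} \;=\; \frac{\alpha_i\|g\cdot v\|^2}{\|\Phi_i(g\cdot v)\|_\sigma^2}\cdot\frac{\beta_i\|h\cdot w\|^2}{\|\Phi_i(h\cdot w)\|_\sigma^2}.$$

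Now I would use the elementary inequality $\min_i(a_ib_i)\geq (\min_i a_i)(\min_j b_j)$ for positive reals, take the minimum over $i$ of the factorized ratio above, and then take the supremum separately over $g\in G_V$ and $h\in G_W$. By Theorem~\ref{theo:Cstablerank} applied to $v$ and to $w$, this yields the lower bound $\rk^G_\alpha(v)\rk^G_\beta(w)$ for the quantity $\sup_{(g,h)} \min_i (\cdot)$, which in turn is bounded above by the full supremum defining $\rk^G_{\alpha\beta}(v\boxtimes w)$ since the Kronecker subgroup embeds into the full acting group.

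The only place that requires genuine care, and which I would flag as the main technical step, is the identification $\Phi_i(v\boxtimes w) = \Phi_i(v)\otimes \Phi_i(w)$ together with the multiplicativity $\|A\otimes B\|_\sigma=\|A\|_\sigma\|B\|_\sigma$; the latter is standard, while the former requires tracking the permutation isomorphism between $\bigotimes_i (V_i\otimes W_i)$ and $\bigl(\bigotimes_i V_i\bigr)\otimes\bigl(\bigotimes_i W_i\bigr)$ compatibly with the definition of flattening. Everything else is a clean packaging via the formula from Theorem~\ref{theo:Cstablerank}, and the min-of-products inequality is immediate from the pointwise bounds $a_i\ge \min_j a_j$ and $b_i\ge \min_j b_j$.
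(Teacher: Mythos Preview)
Your proposal is correct and follows essentially the same route as the paper: invoke Theorem~\ref{theo:Cstablerank}, restrict to Kronecker-product group elements $g\boxtimes h$, use the factorization $\Phi_i(v\boxtimes w)=\Phi_i(v)\otimes\Phi_i(w)$ together with multiplicativity of the norms, and then apply $\min_i(a_ib_i)\ge(\min_i a_i)(\min_j b_j)$ before taking the supremum. If anything, your write-up is slightly more explicit than the paper's about the technical ingredients (the flattening identity and the spectral-norm multiplicativity), which the paper uses without comment.
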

\begin{proof}
if $g\in \GL(V_1)\times \cdots \times \GL(V_d)$ and $h\in \GL(W_1)\times \cdots \times \GL(W_d)$ then we can consider
 $g\boxtimes h\in \GL(V_1\otimes W_1)\times \cdots \times \GL(V_d\otimes W_d)$. We have
 $$
 \frac{\alpha_i\beta_i\|(g\boxtimes h)\cdot (v\boxtimes w)\|^2}{\|\Phi_i((g\boxtimes h)\cdot (v\boxtimes w))\|_\sigma}=
  \frac{\alpha_i\beta_i\|((g\cdot v)\boxtimes (h\cdot w)\|^2}{\|\Phi_i((g\cdot v)\boxtimes (h\cdot w))\|_\sigma}=
   \frac{\alpha_i\|g\cdot v\|^2\beta_i\| h\cdot w\|^2}{\|\Phi_i(g\cdot v)\|_\sigma\|\Phi_i(h\cdot w)\|_\sigma}
 $$
 Therefore, we get
 $$
 \min_{i} \frac{\alpha_i\beta_i\|(g\boxtimes h)\cdot (v\boxtimes w)\|^2}{\|\Phi_i((g\boxtimes h)\cdot (v\boxtimes w))\|_\sigma}\geq
 \min_{i} \frac{\alpha_i\|g\cdot v\|^2}{\|\Phi_i(g\cdot v)\|_\sigma}\cdot \min_j \frac{\beta_j\|h\cdot w\|^2}{\|\Phi_j(h\cdot w)\|_\sigma}.
 $$
 Taking the supremum over all $g$ and $h$ now gives $\rk^G_{\alpha\beta}(v\boxtimes w)\geq \rk^G_{\alpha}(v)\rk^G_\beta(w)$
 \end{proof}

\section{Application of the $G$-stable rank to the Cap Set Problem}
The Cap Set Problem asks for a largest possible subset $S\subseteq\F_3^n$ without an arithmetic progression. Let $c(n)$ be the largest possible cardinality of such a set. 
 It was recently proved by
Ellenberg and Gijswijt that $c(n)=O(\theta^n)$, where $\theta=\frac{3}{8}(207+33\sqrt{33})^{\frac{1}{3}}<2.756$.  Tao gave an elegant formulation of the proof of this bound using the notion of slice rank. 
Here we will use a similar approach, using the $G$-stable rank instead of the slice rank to get an explicit bound for all $n$
which the same asymptotic behavior. We view $K^3$ as the vector space with basis $[0],[1],[2]$
where we view $0,1,2$ as elements in $\F_3$. More generally, we view $K^{3^n}$ as the vector space with basis $[a]$, $a\in \F_3^n$. Note that $a,b,c$ form an arithmetic progression in $\F_3^n$ if and only if $a+b+c=0$.
Consider the tensor
$$
v_n=\sum_{\scriptstyle (a,b,c)\in \F_3^{n\times 3}\atop \scriptstyle a+b+c=0}[a]\otimes [b]\otimes [c]=\sum_{\scriptstyle (a,b,c)\in \F_3^{n\times 3}\atop \scriptstyle a+b+c=0}[a,b,c]\in K^{3^n}\otimes K^{3^n}\otimes K^{3^n}.
$$
Suppose that $S\subset \F_3^n$ is a set without arithmetic progression. Then we have
$$
w=\sum_{\scriptstyle (a,b,c)\in S^3\atop \scriptstyle a+b+c=0} [a,b,c]\in K^3\otimes K^3\otimes K^3=\sum_{a\in S}[a,a,a]
$$
The tensor $w$ is a projection of $v$ and lies in the orbit closure of $v$. In particular, we have $\rk^G(w)\leq \rk^G(v)$.
Since $w$ is a direct sum of $|S|$ rank 1 tensors, we get $\rk^G(w)\geq |S|$ by Proposition~\ref{prop:additive}. So we have $\rk^G(v)\geq \rk^G(w)\geq |S|$.

We will work over the field $K=\F_3$. For a function $f:\F_3^n\to \F_3$ we define 
$$\langle f\rangle=\sum_{a\in \F_3^n} f(a)[a]\in K^{3^n}.
$$
In particular, we have $\langle 1\rangle=[0]+[1]+[2]$, $\langle x\rangle=[1]+2[2]=[1]-[2]$ and $\langle x^2\rangle=[1]+[2]$.
A basis of $K^{3^n}$ is formed by taking all $\langle p(x)\rangle$ where $p(x)=p(x_1,\dots,x_n)$ is a polynomial of degree $\leq 2$ in each of the variables $x_1,x_2,\dots,x_n$.
With respect to the basis $\langle 1\rangle,\langle x\rangle,\langle x^2\rangle$,
we have $v_n=\langle f\rangle$ where $f:\F_3^n\times \F_3^n\times \F_3^n\to \F_3$ is given by 
$$
f(x,y,z)=\begin{cases}
1 & \mbox{if $x+y+z=0$;}\\
0 &\mbox{otherwise.}
\end{cases}
$$
For $n=1$ we have $v_1=\langle f\rangle$ where $f:\F_3\times \F_3 \times \F_3\to \F_3$ is given by $f(x,y,z)=1-(x+y+z)^2=1-x^2-y^2-z^2+x+y+z$.
So we have
$$
v_1=\langle 1,1,1\rangle -\langle x^2,1,1\rangle-\langle 1,x^2,1\rangle-\langle 1,1,x^2\rangle+\langle 1,x,x\rangle+\langle x,1,x\rangle+\langle x,x,1\rangle.
$$
The support of $S$ with respect to the basis $\langle 1\rangle,\langle x\rangle,\langle x^2\rangle$ is
$$
\{(0,0,0),(2,0,0),(0,2,0),(0,0,2),(0,1,1),(1,0,1),(1,1,0)\}
$$
An optimal solution to the linear program is $x(1,0)=x(2,0)=x(3,0)=\frac{1}{2}$, $x(1,1)=x(2,1)=x(3,1)=\frac{1}{4}$ and $x(1,2)=x(2,2)=x(3,2)=0$,
which gives $\rk^G(v)\geq\rk^T(v)=\sum_{i,j}x(i,j)=\frac{9}{4}=2.25$.
An optimal solution for the dual program is $y(2,0,0)=y(0,2,0)=y(0,0,2)=\frac{1}{4}$  and $y(0,1,1)=y(1,0,1)=y(1,1,0)=\frac{1}{2}$ and $y(0,0,0)=0$.

The support of the tensor $v^{\boxtimes n}=v\boxtimes v\boxtimes \cdots\boxtimes v$ is contained in the set 
$$T_n=\{(\lambda,\mu,\nu)\in (\{0,1,2\}^n)^3\mid
|\lambda|\leq 2n, |\mu|\leq 2n,|\nu|\leq 2n\}.$$
We will give a solution to the linear program $\LP(S^n)$
that we conjecture to be optimal. Whether optimal or not, 
it will give an upper bound for the $G$-stable rank of $v^{\boxtimes n}$.
Suppose that $t_0,t_1,t_2,\dots,t_{2n}\geq 0$ are numbers such that $t_i+t_j+t_k\geq 1$ whenever $i+j+k\leq 2n$. 
If we define $x(i,\lambda)=t_{|\lambda|}$ for all $\lambda\in \{0,1,2\}^n$, and $i=1,2,3$ then we have
$x(1,\lambda)+x(2,\mu)+x(3,\nu)=t_{|\lambda|}+t_{|\mu|}+t_{|\nu|}\geq 1$, so we have a solution to the linear program.
So we get 
$$\rk^G(v)\leq \sum_{i=1}^3\sum_{\lambda} x(i,\lambda)=3\sum_{\lambda} t_{|\lambda|}=3\sum_{i=0}^{2n}f_{n,i}t_i$$
where $f_{n,i}$ is the number of solutions to $a_1+a_2+\cdots+a_n=d$ with $a_1,a_2,\dots,a_n\in \{0,1,2\}$.
So $f_{n,i}$ is the coefficient of $x^i$ in $(1+x+x^2)^n$. To choose the $t$'s optimally, we have to solve a linear program
by minimizing $3\sum_{i=0}^{2n}f_{n,i}t_i$ under the constraints:
\begin{enumerate}
\item $t_i+t_j+t_k\geq 1$ if $i+j+k\leq 2n$; 
\item $t_i\geq 0$ for all $i$.
\end{enumerate}
We get the following optimal solutions for the $t_i$:
$$
\begin{array}{||c|c||c|c|c|c|c|c|c||c|c|c|c||}
\hline\hline
n & & 0 & 1 & 2 & 3 & 4 & 5 & 6 & \mbox{UB} & \mbox{EG'} & \mbox{EG} & \mbox{best cap set} \\ \hline \hline
1 & f_{1,i} & 1 & 1 & 1 & 0 & 0 & 0 & 0  & & & & \\ 
 & t_i & \frac{1}{2} & \frac{1}{4} & 0 & 0 & 0 & 0 & 0& 2\frac{1}{4} & 3 & 3 & 2\\
 \hline
2 & f_{2,i} & 1 & 2 & 3 & 2 & 1 & 0 & 0 & & & & \\
 & t_i &  \frac{3}{5} & \frac{2}{5} & \frac{1}{5} & 0 & 0 & 0 & 0 & 6& 7 & 9 &  4 \\ \hline
3 & f_{3,i} & 1 & 3 & 6 & 7 & 6 & 3 & 1 &  & & &\\
 & t_i & 1 & \frac{2}{3} & \frac{1}{3} & 0 & 0 & 0 & 0& 15 & 18 & 30  &  9 \\ \hline
4 & f_{4,i} & 1 & 4 & 10 & 16 & 19 & 16 & 10 & & & &\\
 & t_i & 1 & \frac{3}{4} & \frac{1}{2} & \frac{1}{4} & 0 & 0 & 0 & 39 & 45 & 45 &  20  \\ \hline
5 & f_{5,i} & 1 & 5 & 15 & 30 & 45 & 51 & 45 & & & &\\
 & t_i & 1 & \frac{4}{5} & \frac{3}{5} & \frac{2}{5} & \frac{1}{5} & 0 &  0 & 105 &  123  & 153 & 45\\ \hline
6 & f_{6,i} & 1 & 6 & 21 & 50  & 90 & 126 & 141& & & &\\
 & t_i &  1 & 1 & 1 & \frac{2}{3} & \frac{1}{3} &  0& 0 & 274 &  324 & 504  & 112\\ \hline\hline
\end{array}
$$
In the table, the column UB gives the value of
$3\sum_{i=0}^{2n}f_{n,i}t_i$ which is an upper bound for 
the $G$-stable rank and the cardinality of a cap set
in $\F_3^n$. 
The column labeled ``best cap set'' gives the cardinality of the largest known cap set in $\F_3^n$. The column EG 
gives the Ellenberg--Gijswijt upper bound, which is $3\sum_{i=0}^{\lfloor\frac{2}{3}n\rfloor}f_{n,i}$. 
This estimate relies on the fact that if $i,j,k$ are nonnegative
integers with $i+j+k\leq 2n$, then it follows that $\min\{i,j,k\}\leq \lfloor \frac{2n}{3}\rfloor$. But one can say something stronger, namely $i\leq \lfloor \frac{2n}{3}\rfloor$, $j\leq \lfloor \frac{2n-1}{3}\rfloor$ or
$k\leq \lfloor \frac{2n-2}{3}\rfloor$.
This observation gives a better bound  that is still based on the slice rank in the column labeled EG'.

In the table of Section~\ref{sec:capset} we have computed
the optimal value of $3\sum_{i=0}^{2n}f_{n,i}t_i$
rounded down to the nearest integer for $n\leq 20$.
This bound is an upper bound for  the cardinality of a cap set in $\F_3^n$.

Looking at optimal solutions for small $n$, we make the following conjecture:
\begin{conjecture}\label{conj:t}
The optimal solution of the linear program for $t_0,t_1,t_2,\dots,t_{2n}$ is as follows:
$$
\begin{cases}
\underbrace{1,1,\dots,1}_{\frac{2n-3}{3}},\frac{2}{3},\frac{1}{3},0,0,\dots & \mbox{if $n\equiv 0\bmod 3$}\\
\underbrace{1,1,\dots,1}_{\frac{2n-5}{3}},\frac{3}{4},\frac{1}{2},\frac{1}{4},0,0,\dots & \mbox{if $n\equiv 1\bmod 3$}\\
\underbrace{1,1,\dots,1}_{\frac{2n-7}{3}},\frac{4}{5},\frac{3}{5},\frac{2}{5},\frac{1}{5},0,0,\dots & \mbox{if $n\equiv 2\bmod 3$}
\end{cases}.
$$
\end{conjecture}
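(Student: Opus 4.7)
My plan is to prove Conjecture~\ref{conj:t} by LP duality: produce an explicit dual feasible solution whose objective matches the conjectured primal value, and verify primal feasibility directly.

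First, I would verify primal feasibility of the proposed $(t_i)$. In each of the three congruence classes the sequence is non-increasing, so the constraint $t_i+t_j+t_k\geq 1$ for $i+j+k\leq 2n$ need only be checked when $i+j+k=2n$ (any triple with smaller sum is dominated after increasing some index, which can only decrease the left-hand side). A finite case analysis on how $\{i,j,k\}$ distribute among the all-ones region, the ramp, and the zero region finishes this step.

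For optimality, complementary slackness identifies the support of the dual as triples with $t_i+t_j+t_k=1$. Take $n\equiv 0\pmod 3$ with $m=2n/3$ as the representative case: the tight triples are $(m,m,m)$, the permutations of $(m-1,m,m+1)$, and the boundary triples $(\ell,m+1+b,m+1+c)$ with $0\le\ell\le m-2$, $0\le b\le c$ and $b+c\le m-2-\ell$. I propose the dual solution
\[
y_{\{m,m,m\}} = f_{n,m}-f_{n,m-1},\quad y_{\{m-1,m,m+1\}} = 3f_{n,m-1},\quad y_{\{m-a,\,m+1,\,m+a-1\}} = 3f_{n,m-a}\ (a=2,\dots,m),
\]
with all remaining dual variables zero. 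Non-negativity of $y_{\{m,m,m\}}$ follows from unimodality of $f_{n,\cdot}$ (the peak is at $n\ge m$). A direct computation then verifies that the dual constraints at $\ell=0,1,\dots,m$ hold with equality and that the dual objective equals the conjectured primal value $3\sum_{\ell=0}^{m-2}f_{n,\ell}+2f_{n,m-1}+f_{n,m}$. The residue classes $n\equiv 1,2\pmod 3$ are handled analogously using the tight-triple families corresponding to their four- and five-step ramp patterns.

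The only dual constraints not automatically tight are those at $\ell\ge m+1$. For $\ell=m+1+\beta$ with $\beta\ge 1$, only $y_{\{m-\beta-2,m+1,m+1+\beta\}}$ contributes, and the required inequality reduces to $f_{n,m-\beta-2}\le f_{n,m+1+\beta}$, which follows from the symmetry $f_{n,\ell}=f_{n,2n-\ell}$ and unimodality (the index $m+1+\beta$ is strictly closer to the peak $n$ than $4n/3+\beta+2$). The essential inequality is the one at $\ell=m+1$, namely
\[
\sum_{\ell=0}^{m-1} f_{n,\ell} + f_{n,m-2} \ \le\ f_{n,m+1}.
\]
I expect this inequality, together with its analogs for $n \equiv 1, 2 \pmod 3$, to be the main obstacle. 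A natural route is induction on $n$ in steps of $3$ via the convolution identity $(1+x+x^2)^n = (1+x+x^2)^{n-3}(1+x+x^2)^3$, expressing $f_{n,\ell}$ as a weighted sum of $f_{n-3,\ell-k}$. An alternative is a saddle-point or Cram\'{e}r-type estimate, exploiting that $f_{n,m+1}$ and $f_{n,m-1}$ have the same exponential order (being equidistant from the peak at $\ell=n$) while the lower-tail terms $f_{n,\ell}$ for $\ell<m-1$ decay geometrically. The technical crux will be making such estimates sharp enough to hold for all admissible $n$, not merely asymptotically.
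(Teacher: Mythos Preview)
The paper does not prove this statement: it is stated as Conjecture~6.1, supported only by numerical evidence for small~$n$, and in the conclusion the author writes that ``a proof of Conjecture~\ref{conj:t} may lead to stronger asymptotic upper bounds for the cap set problem.'' There is therefore no proof in the paper to compare against; your proposal is an attempt at an open problem.

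On the substance of your plan: the LP duality reduction is set up correctly for the case $n\equiv 0\pmod 3$. With $m=2n/3$, your dual variables do give equality in the dual constraint at every $\ell\le m$, the dual objective equals the claimed primal value $3\sum_{\ell=0}^{m-2}f_{n,\ell}+2f_{n,m-1}+f_{n,m}$, nonnegativity of $y_{\{m,m,m\}}=f_{n,m}-f_{n,m-1}$ follows from unimodality, and the constraints at $\ell\ge m+2$ reduce to the easy unimodality inequality you state. So the entire argument really does collapse to the single ``essential inequality''
\[
\sum_{\ell=0}^{m-1} f_{n,\ell} + f_{n,m-2} \ \le\ f_{n,m+1}.
\]
This is a genuine gap: you have not proved it, and neither of the two approaches you sketch (induction in steps of~$3$ via the convolution with $(1+x+x^2)^3$, or a Cram\'er/saddle-point estimate) is carried out. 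The difficulty is real, since the inequality compares a cumulative lower tail to a single coefficient and must hold for every admissible~$n$, not just asymptotically; one needs a sharp enough \emph{lower} bound on $f_{n,m+1}$, which local limit or saddle-point methods give only up to constants that would have to be tracked carefully. A second, smaller gap is the phrase ``handled analogously'' for $n\equiv 1,2\pmod 3$: there the ramp has length~$4$ or~$5$, the family of tight triples is richer (for instance, when $n\equiv 1\pmod 3$ one has tight triples $(m,m,m{+}2)$, $(m,m{+}1,m{+}1)$, $(m{-}1,m{+}1,m{+}2)$ in addition to the boundary family), and constructing a dual certificate and isolating the analogous ``essential inequality'' requires its own bookkeeping. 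Your reduction is a clean and correct first step, but what remains is exactly the hard part.
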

\section{Conclusion and further directions}
The $G$-stable rank is a new notion of rank for tensors. Up to a constant it is equal to the slice rank, but it is more refined in the sense that it can take non-integer values, and unlike the slice rank it is supermultiplicative with respect to vertical tensor products. As an illustration, we showed that the $G$-stable rank can be used to improve upper bounds for the cardinality of cap sets. A proof of Conjecture~\ref{conj:t} may lead to stronger asymptotic upper bounds
for the cap set problem. Numerical experiments
suggest an upper bound of the form $C\theta^n/\sqrt{n}$
for some constant $C$.

Besides algebraic applications of tensor decompositions  there are also 
many numerical applications such as psychometrics \cite{Tucker63,Tucker64,Tucker66,CarrolChang70,Harshman70} and chemometrics \cite{AppellofDavidson81}.
 For more details and references, see the survey article \cite{BaderKolda09} or the books \cite{Kroonenberg08,Landsberg12}. 
The formula (\ref{eq:rkG_C}) allows us to compute or approximate 
the $G$-stable rank for real or complex tensors using optimization. 
Future directions of research include algorithms for approximating the $G$-stable rank of a tensor, or to approximate a given tensors by tensors of low $G$-stable rank and apply these to such tasks as denoising, dimension reduction and tensor completion.

\begin{bibdiv}
\begin{biblist}

\bib{AppellofDavidson81}{article}{
	author={Appellof, C. J.},
	author={Davidson, E. R.},
	title={Strategies for analyzing data from video fluorometric monitoring of liquid chromatographic effluents},
	journal={Anal. Chem.},
	volume={53},
	year={1981},
	pages={2053--2056}
	}

\bib{BaderKolda09}{article}{
   author={Kolda, Tamara G.},
   author={Bader, Brett W.},
   title={Tensor decompositions and applications},
   journal={SIAM Rev.},
   volume={51},
   date={2009},
   number={3},
   pages={455--500},
}

\bib{BatemanKatz12}{article}{
   author={Bateman, Michael},
   author={Katz, Nets Hawk},
   title={New bounds on cap sets},
   journal={J. Amer. Math. Soc.},
   volume={25},
   date={2012},
   number={2},
   pages={585--613},
}

\bib{BCCGNSU17}{article}{
   author={Blasiak, Jonah},
   author={Church, Thomas},
   author={Cohn, Henry},
   author={Grochow, Joshua A.},
   author={Naslund, Eric},
   author={Sawin, William F.},
   author={Umans, Chris},
   title={On cap sets and the group-theoretic approach to matrix
   multiplication},
   journal={Discrete Anal.},
   date={2017},
}

\bib{BrownBuhler82}{article}{
   author={Brown, T. C.},
   author={Buhler, J. P.},
   title={A density version of a geometric Ramsey theorem},
   journal={J. Combin. Theory Ser. A},
   volume={32},
   date={1982},
   number={1},
   pages={20--34},
}

\bib{BCS97}{book}{
   author={B\"{u}rgisser, Peter},
   author={Clausen, Michael},
   author={Shokrollahi, M. Amin},
   title={Algebraic complexity theory},
   series={Grundlehren der Mathematischen Wissenschaften},
   volume={315},
   note={With the collaboration of Thomas Lickteig},
   publisher={Springer-Verlag, Berlin},
   date={1997},
   pages={xxiv+618},
}

\bib{CarrolChang70}{article}{
	author={Carrol, J. D.},
	author={Chang, J. J.},
	title={Analysis of individual differences in multidimensional scaling via the $N$-way generalization of ``Eckart-Young'' decomposition},
	journal={Psychometrika},
	volume={35},
	year={1970}, 
	pages={283--319}
	}

\bib{CGJ19}{article}{
   author={Christandl, Matthias},
   author={Gesmundo, Fulvio},
   author={Jensen, Asger Kj\ae rulff},
   title={Border rank is not multiplicative under the tensor product},
   journal={SIAM J. Appl. Algebra Geom.},
   volume={3},
   date={2019},
   number={2},
   pages={231--255},
}

\bib{CVZ17}{article}{
   author={Christandl, Matthias},
   author={Vrana, P\'eter},
   author={Zuidddam, Jeroen},
   title={Universal points in the asymptotic spectrum of tensors},
   year={2017},
   eprint={arXiv:1709.07851}
   }
   
\bib{Cohn95}{book}{
   author={Cohn, P. M.},
   title={Skew fields},
   series={Encyclopedia of Mathematics and its Applications},
   volume={57},
   note={Theory of general division rings},
   publisher={Cambridge University Press, Cambridge},
   date={1995},
}

\bib{CLP17}{article}{
   author={Croot, Ernie},
   author={Lev, Vsevolod F.},
   author={Pach, P\'{e}ter P\'{a}l},
   title={Progression-free sets in $\Bbb Z^n_4$ are exponentially small},
   journal={Ann. of Math. (2)},
   volume={185},
   date={2017},
   number={1},
   pages={331--337},
}

\bib{Edel04}{article}{
   author={Edel, Yves},
   title={Extensions of generalized product caps},
   journal={Des. Codes Cryptogr.},
   volume={31},
   date={2004},
   number={1},
   pages={5--14},
}

\bib{EllenbergGijswijt17}{article}{
   author={Ellenberg, Jordan S.},
   author={Gijswijt, Dion},
   title={On large subsets of $\Bbb F^n_q$ with no three-term arithmetic
   progression},
   journal={Ann. of Math. (2)},
   volume={185},
   date={2017},
   number={1},
   pages={339--343},
}

\bib{FortinReutenauer04}{article}{
   author={Fortin, Marc},
   author={Reutenauer, Christophe},
   title={Commutative/noncommutative rank of linear matrices and subspaces
   of matrices of low rank},
   journal={S\'{e}m. Lothar. Combin.},
   volume={52},
   date={2004/07},
   pages={Art. B52f, 12},
}

\bib{Harshman70}{article}{
	author={Harshman, R. A.},
	title={Foundations of PARAFAC procedure: Models and conditions for an ``explanatory'' multi-modal factor analysis},
	journal={UCLA Working Papers in Phonetics},
	volume={16},
	year={1970},
	pages={1--84},
	}

\bib{Hastad89}{article}{
   author={H\aa stad, Johan},
   title={Tensor rank is NP-complete},
   conference={
      title={Automata, languages and programming},
      address={Stresa},
      date={1989},
   },
   book={
      series={Lecture Notes in Comput. Sci.},
      volume={372},
      publisher={Springer, Berlin},
   },
   date={1989},
   pages={451--460},
}

\bib{Hastad90}{article}{
   author={H\aa stad, Johan},
   title={Tensor rank is NP-complete},
   journal={J. Algorithms},
   volume={11},
   date={1990},
   number={4},
   pages={644--654},
   issn={0196-6774},
}

\bib{Hilbert93}{article}{
   author={Hilbert, David},
   title={Ueber die vollen Invariantensysteme},
   language={German},
   journal={Math. Ann.},
   volume={42},
   date={1893},
   number={3},
   pages={313--373},
}

\bib{Hitchcock27a}{article}{
   author={Hitchcock, F.~L.},
   title={The expression of a tensor or a polyadic as a sum of products},
   journal={J. Math. Pysics},
   volume={6},
   date={1927},
   number={1},
   pages={164--181},
}

\bib{Hitchcock27b}{article}{
   author={Hitchcock, F.~L.},
   title={Multiple invariants and generalized rank of a p-way matrix or tensor},
   journal={J. Math. Pysics},
   volume={7},
   date={1927},
   number={1},
   pages={39--79},
}

\bib{IQS17}{article}{
   author={Ivanyos, G\'{a}bor},
   author={Qiao, Youming},
   author={Subrahmanyam, K. V.},
   title={Non-commutative Edmonds' problem and matrix semi-invariants},
   journal={Comput. Complexity},
   volume={26},
   date={2017},
   number={3},
   pages={717--763},
}

\bib{Kempf78}{article}{
   author={Kempf, George R.},
   title={Instability in invariant theory},
   journal={Ann. of Math. (2)},
   volume={108},
   date={1978},
   number={2},
   pages={299--316}
}

\bib{KempfNess79}{article}{
   author={Kempf, George},
   author={Ness, Linda},
   title={The length of vectors in representation spaces},
   conference={
      title={Algebraic geometry (Proc. Summer Meeting, Univ. Copenhagen,
      Copenhagen, 1978)},
   },
   book={
      series={Lecture Notes in Math.},
      volume={732},
      publisher={Springer, Berlin},
   },
   date={1979},
   pages={233--243},
}

\bib{KSS18}{article}{
   author={Kleinberg, Robert},
   author={Speyer, David E.},
   author={Sawin, Will},
   title={The growth of tri-colored sum-free sets},
   journal={Discrete Anal.},
   date={2018},
   pages={Paper No. 12, 10},
}

\bib{Kroonenberg08}{book}{
	author={Kroonenberg, P. M.},
	title={Applied Multiway Data Analysis},
	publisher={Wiley},
	address={New York},
	year={2008}
	}

\bib{Landsberg12}{book}{
   author={Landsberg, J. M.},
   title={Tensors: geometry and applications},
   series={Graduate Studies in Mathematics},
   volume={128},
   publisher={American Mathematical Society, Providence, RI},
   date={2012},
   pages={xx+439},
}

\bib{Meshulam95}{article}{
   author={Meshulam, Roy},
   title={On subsets of finite abelian groups with no $3$-term arithmetic
   progressions},
   journal={J. Combin. Theory Ser. A},
   volume={71},
   date={1995},
   number={1},
   pages={168--172},
}

\bib{MFK94}{book}{
   author={Mumford, D.},
   author={Fogarty, J.},
   author={Kirwan, F.},
   title={Geometric Invariant Theory},
   series={Ergebnisse der Mathematik und ihrer Grenzgebiete (2)},
   volume={34},
   edition={3},
   publisher={Springer-Verlag, Berlin},
   date={1994},
   pages={xiv+292},
}

\bib{Shitov19}{article}{
   author={Shitov, Yaroslav},
   title={Counterexamples to Strassen's direct sum conjecture},
   journal={Acta Math.},
   volume={222},
   date={2019},
   number={2},
   pages={363--379},
}

\bib{deSilvaLim08}{article}{
   author={de Silva, Vin},
   author={Lim, Lek-Heng},
   title={Tensor rank and the ill-posedness of the best low-rank
   approximation problem},
   journal={SIAM J. Matrix Anal. Appl.},
   volume={30},
   date={2008},
   number={3},
   pages={1084--1127},
   issn={0895-4798},
   review={\MR{2447444}},
   doi={10.1137/06066518X},
}

\bib{Strassen69}{article}{
   author={Strassen, Volker},
   title={Gaussian elimination is not optimal},
   journal={Numer. Math.},
   volume={13},
   date={1969},
   pages={354--356},
}

\bib{Strassen73}{article}{
   author={Strassen, Volker},
   title={Vermeidung von Divisionen},
   language={German, with English summary},
   journal={J. Reine Angew. Math.},
   volume={264},
   date={1973},
   pages={184--202},
}

\bib{Tao16}{article}{
	author={Tao, Terence},
	title={A symmetric formulation of the Croot--Lev--Pach--Ellenberg--Gijswijt capset bound},
	note={blog post},
	year={2016},
	eprint={https://terrytao.wordpress.com/2016/05/18/}
	}
	
\bib{TaoSawin16}{article}{
	author={Tao, Terence},
	author={Sawin, Will},
	title={Notes on the ``slice rank'' of tensors},
	note={blog post},
	year={2016},
	eprint={https://terrytao.wordpress.com/2016/08/24/}
	}
	
\bib{Tucker63}{article}{
	author={Tucker, L. R.},
	title={Implications of factor analysis of three-way matrices for measurement of change},
	book={
		title={Problems in Measuring Change},
		editor={Harris, C. W.},
		publisher={University of Wisconsin Press},
		year={1963}
		}
	pages={122--137}
	}
	
\bib{Tucker64}{article}{
	author={Tucker, L. R.},
	title={The extension of factor analysis to three-dimensional matrices},
	book={
		title={Contributions to Mathematical Psychology},
		editor={Gulliksen, H.},
		editor={Frederiksen, N.},
		publisher={Holt Rinehardt \& Winston},
		address={New York},
		year={1964}
		}
	pages={110--127}
	}
	
\bib{Tucker66}{article}{
	author={Tucker, L. R.},
	title={Some mathematical notes on three-mode factor analysis},
	journal={Psychometrika},
	volume={1966},
	pages={279--311}
	}

\bib{Woodward10}{article}{
	author={Woodward, Christopher},
	title={Moment maps and geometric invariant theory},
	year={2010},
	volume={1},
	number={1},
	pages={55--98},
	journal={Le cours du C.I.R.M.},
}

\end{biblist}
\end{bibdiv}

\end{document}